\definecolor{darkgreen}{rgb}{0,0.45,0}
\newcommand{\Cb}{\mathbb{C}}
\newcommand{\Rb}{\mathbb{R}}
\newcommand{\Sb}{\mathbb{S}}
\newcommand{\Zb}{\mathbb{Z}}
\DeclareMathOperator{\Hom}{\mathrm{Hom}}
\DeclareMathOperator{\Ext}{\mathrm{Ext}}
\newcommand{\inv}{^{-1}}
\newcommand{\id}{\term{id}}
\DeclareMathOperator{\Aut}{\mathrm{Aut}}
\DeclareMathOperator{\BAut}{\mathrm{BAut}}
\DeclareMathOperator{\BAutpl}{\mathrm{BAut}_{1}}
\newcommand\BAutpln[1]{\mathrm{BAut}_1^{#1}}
\DeclareMathOperator{\ev}{\textup{\textbf{ev}}}
\DeclareMathOperator{\evid}{\ev_{\id}}
\DeclareMathOperator{\Type}{\mathcal{U}}
\newcommand{\symId}{\widetilde{\mathrm{Id}}}
\DeclareMathOperator{\const}{\mathrm{const}}
\DeclareMathOperator{\refl}{\mathrm{refl}}
\newcommand\invrefl{{\mathrm{refl}^*}} %
\DeclareMathOperator{\ap}{\term{ap}}
\DeclareMathOperator{\merid}{\term{merid}}
\DeclareMathOperator{\pr}{\term{pr}}
\newcommand{\pcomp}[2]{{#1}_{[#2]}} %
\newcommand\on\operatorname
\newcommand\ab{\on{Ab}}
\newcommand{\term}[1]{\mathrm{{#1}}}
\DeclarePairedDelimiter\Trunc{\lVert}{\rVert} %
\DeclarePairedDelimiter\trunc{\lvert}{\rvert} %
\newcommand{\pt}{\term{pt}}
\newcommand\HSpace[1]{\on{HSpace}(#1)} %
\newcommand{\ZHSpace}[2]{\on{HSpace}^{#1}(#2)}
\newcommand{\GUP}{\on{GUP}} %
\newcommand{\smin}{\term{sm}}
\newcommand{\auxl}{\term{auxl}}
\newcommand{\auxr}{\term{auxr}}
\newcommand{\gluel}{\term{gluel}}
\newcommand{\gluer}{\term{gluer}}
\newcommand\EM[2]{\on{K}(#1,#2)} %
\newcommand{\pto}{\to_*}
\newcommand{\lra}       {\longrightarrow}
\newcommand{\llra}[1]   {\stackrel{#1}{\lra}}  %
\newcommand{\jeq}{\equiv} %
\theoremstyle{plain}
\newtheorem{thm}{Theorem}[section]
\newtheorem{lem}[thm]{Lemma}
\newtheorem{cor}[thm]{Corollary}
\newtheorem{prop}[thm]{Proposition}
\theoremstyle{definition}
\newtheorem{defn}[thm]{Definition}
\newtheorem{ex}[thm]{Example}
\theoremstyle{remark}
\newtheorem{rmk}[thm]{Remark}
\Crefname{thm}{Theorem}{Theorems}
\Crefname{prop}{Proposition}{Propositions}
\Crefname{lem}{Lemma}{Lemmas}
\newcommand{\define}[1]{\textbf{\boldmath{#1}}}
\newcommand{\period}{\rlap{\hspace{2pt}.}}
\newcommand{\comma}{\rlap{\hspace{2pt},}}
\newcommand{\ct}{%
  \mathchoice{\mathbin{\raisebox{0.5ex}{$\displaystyle\centerdot$}}}%
             {\mathbin{\raisebox{0.5ex}{$\centerdot$}}}%
             {\mathbin{\raisebox{0.25ex}{$\scriptstyle\,\centerdot\,$}}}%
             {\mathbin{\raisebox{0.1ex}{$\scriptscriptstyle\,\centerdot\,$}}}%
}
\newcommand\linv{\backslash}
\newcommand\rinv{/}
\newcommand\dual[1]{#1^*}
\newcommand\dualid{\dual{\id}} %
\renewcommand\subsection{\@startsection{subsection}{2}%
  \z@{.5\linespacing\@plus.7\linespacing}{-.5em}%
  {\normalfont\bfseries\boldmath}}
\DeclareRobustCommand{\SkipTocEntry}[5]{}
\begin{document}

\title{Central H-spaces and banded types}

\author[Buchholtz]{Ulrik Buchholtz}
\address{University of Nottingham, Nottingham, United Kingdom}
\email{ulrik.buchholtz@nottingham.ac.uk}

\author[Christensen]{J. Daniel Christensen}
\address{University of Western Ontario, London, Ontario, Canada}
\email{jdc@uwo.ca}

\author[Flaten]{Jarl G.\ Taxerås Flaten}
\address{University of Western Ontario, London, Ontario, Canada}
\email{jtaxers@uwo.ca}

\author[Rijke]{Egbert Rijke}
\address{University of Ljubljana, Ljubljana, Slovenia}
\email{e.m.rijke@gmail.com}

\begin{abstract}
  We introduce and study \emph{central} types, which are generalizations of Eilenberg--Mac~Lane spaces.
  A type is central when it is equivalent to the component of the identity among its own self-equivalences.
  From centrality alone we construct an infinite delooping in terms of a tensor product of \emph{banded types}, which are the appropriate notion of torsor for a central type.
  Our constructions are carried out in homotopy type theory, and therefore hold in any \(\infty\)-topos.
  Even when interpreted into the \(\infty\)-topos of spaces, our approach to constructing these deloopings is new.

  Along the way, we further develop the theory of H-spaces in homotopy type theory, including their relation to \emph{evaluation fibrations} and Whitehead products.
  These considerations let us, for example, rule out the existence of H-space structures on the \(2n\)-sphere for \(n > 0\).
  We also give a novel description of the moduli space of H-space structures on an H-space.
  Using this description, we generalize a formula of Arkowitz--Curjel and Copeland for counting the number of path components of this moduli space.
  As an application, we deduce that the moduli space of H-space structures on the \(3\)-sphere is \(\Omega^6 \Sb^3\).
\end{abstract}

\date{April 2, 2025}

\maketitle

\tableofcontents

\section{Introduction}

In this paper we study H-spaces and their deloopings.
By working in homotopy type theory, our results apply to any $\infty$-topos~\cite{dBB,dB,KL,LS,Shu}.
While most of the theory we develop is new to homotopy type theory, much of the theory related to H-spaces in \cref{sec:H-spaces} is inspired by classical topology.
However, our constructions in \cref{sec:central-types,sec:bands-and-torsors} are new even for the $\infty$-topos of spaces.

A key concept is that of a central type.
A pointed type $A$ is \textbf{central} if the map
\[ f \longmapsto f(\pt) \ : \ \pcomp{(A \to A)}{\id} \longrightarrow A \]
which evaluates a function at the base point of \(A\) is an equivalence.
Here $\pcomp{(A \to A)}{\id}$ denotes the identity component of the type of all
self-maps of $A$, and $\pt$ denotes the base point of $A$.
Since the domain of this map is a connected H-space (see \cref{defn:H-space}(2)), so is any central type.
We give a list of equivalent conditions for a connected H-space to be central in \cref{prop:central-tfae}, and one of them is that the type \(A \pto A\) of pointed self-maps is a set.
It follows, for example, that every Eilenberg--Mac~Lane space $\EM{G}{n}$,
with $G$ abelian and $n \geq 1$, is central.
We show in \cref{ss:products-EM-spaces} that some, but not all, products of
Eilenberg--Mac~Lane spaces are central.
In \cref{ss:stable-n-groups}, we show that
every truncated, central type with at most two non-zero homotopy groups,
both of which are finitely presented,
is a product of Eilenberg--Mac~Lane spaces.
In general, we don't know whether every central type is
a product of Eilenberg--Mac~Lane spaces, and we leave this as an open question.

Our first result is:\nopagebreak
\theoremstyle{plain}
\newtheorem*{thm:uniquely-deloopable}{\cref{thm:uniquely-deloopable}}
\begin{thm:uniquely-deloopable}
Let $A$ be a central type.  Then $A$ has a unique delooping.
\end{thm:uniquely-deloopable}
The key ingredient of this result---and much of the paper---is that we have
a concrete description of the delooping of $A$.
It is given by the type $\BAutpl(A) :\jeq \sum_{X : \Type} \, \Trunc{A = X}_0$
of types \emph{banded} by $A$,
which is the 1-connected cover of $\BAut(A)$.
As an example, since $\EM{G}{n}$ is central for $G$ abelian and $n \geq 1$,
this gives an alternative way to define $\EM{G}{n+1}$ in terms of $\EM{G}{n}$,
as previously indicated by the first author~\cite{BModalHoTT2019}.
Banded types are denoted \(X_p\), where \(p : \Trunc{A = X}_0\) is the band.

We also show:
\newtheorem*{thm:Omega-equiv}{\cref{thm:Omega-equiv}}
\begin{thm:Omega-equiv}
  Let $A$ be a central type.
  Every pointed map $f : A \pto A$ has a
  unique delooping
  \[ X_p \longmapsto \pcomp{(X \to A)}{\dual{f} \circ \tilde{p}\inv} \ : \ \BAutpl(A) \longrightarrow_* \BAutpl(A). \]
\end{thm:Omega-equiv}
Here $X$ is a type banded by $p$, while \(\pcomp{(X \to A)}{\dual{f} \circ \tilde{p}\inv}\) denotes
a component of a mapping type, whose banding is explained in \cref{defn:Bf}.
It follows from the theorem that the type of pointed self-maps of $\BAutpl(A)$ is a set,
since it is equivalent to $A \pto A$.

One of the motivations for studying $\BAutpl(A)$ is that one can define
a \emph{tensoring} operation.
Given two banded types $X$ and $Y$ in $\BAutpl(A)$, the type $\dual{X} = Y$
has a natural banding, where $\dual{X}$ denotes a certain dual of $X$.
We write $X \otimes Y$ for this banded type, and show in \cref{thm:BAutpl-H-space}
that it makes $\BAutpl(A)$ into an abelian H-space.
Combined with \cref{thm:uniquely-deloopable}, \cref{thm:Omega-equiv}, and
the characterization of central types mentioned earlier, we therefore deduce:

\newtheorem*{cor:central-infinite-loop-space}{\cref{cor:central-infinite-loop-space}}
\begin{cor:central-infinite-loop-space}
For a central type $A$, the type $\BAutpl(A)$ is again central.
Therefore, $A$ is an infinite loop space, in a unique way.
Moreover, every pointed map $A \pto A$ is infinitely deloopable, in a unique way.
\end{cor:central-infinite-loop-space}

Our tensoring operation gives a new description of the H-space structure
on $\EM{G}{n}$, which will be helpful for calculations of Euler classes in work in progress
and is what originally motivated this work.

We also give an alternate description of the delooping of a central type $A$ as a certain type of $A$-torsors (\cref{ss:bands-and-torsors}), and give an analogous description of $\EM{G}{1}$ for any group $G$ (\cref{ss:G-torsors}).

\medskip

To prove the above results, we first need to further develop the theory of H-spaces.
One difference between our work and classical work in topology is that we
emphasize the moduli space $\HSpace{A}$ of H-space structures on a pointed type $A$,
rather than just the set of components.
For example, we prove:

\newtheorem*{thm:space-of-H-space-structures}{\cref{thm:space-of-H-space-structures}}
\begin{thm:space-of-H-space-structures}
Let \( A \) be an H-space such that for all $a : A$, the map $a \cdot -$ is an equivalence.
Then the type $\HSpace{A}$ of H-space structures on \( A \) is equivalent to the type
\( A \land A \to_* A \) of pointed maps.
\end{thm:space-of-H-space-structures}

This generalizes a classical formula of Arkowitz--Curjel and Copeland, which
plays a key role in classical results on the number of H-space structures on various spaces.
The classical formula only determines the path components of the type of H-space structures,
while our formula gives an equivalence of types.
From our formula it immediately follows, for example, that the type of H-space structures
on the \(3\)-sphere is \( \Omega^6 \Sb^3 \).
The proof of \cref{thm:space-of-H-space-structures} uses \emph{evaluation fibrations},
which generalize the map appearing in the definition of ``central.''
In fact, these evaluation fibrations play an important role in much of the paper, and underlie our main results about central types.
We also relate the existence of sections of an evaluation fibration to the vanishing of Whitehead products.
Related considerations let us show that no even spheres besides $\Sb^0$ admit H-space structures.

In \cref{prop:hspace-pointed-section-comp} we show that every central type
has a unique H-space structure, in the strong sense that the type $\HSpace{A}$
is contractible.
We prove several results about types with unique H-space structures.
For example, we show that such H-space structures are associative and coherently abelian,
and that every pointed self-map is an H-space map, a weaker version of the
delooping above.
We also give an example, pointed out to us by David Wärn, which shows that not every type with a unique H-space structure is central (\cref{ex:unique-H-space-not-central}).

We note that these results rely on us defining ``H-space'' to
include a coherence between the two unit laws (see \cref{defn:H-space}).

\addtocontents{toc}{\SkipTocEntry}
\subsection*{Relation to other work}

The concept of ``band'' (``lien'' in French) originates with Giraud~\cite[Ch.~IV]{Giraud1971};
see also~\cite[Sec.~7.2.2]{Lurie-HTT}.
Dwyer and Wilkerson~\cite{DwyerWilkerson1995} introduced the notion of centrality in
the context of $p$-compact groups under the name of ``abelian''.

The proof of Theorem~1.7 in \cite{Bousfield1996} shows that in classical homotopy theory
every central space is a product of Eilenberg--Mac~Lane spaces (a ``GEM''),
generalizing our \cref{thn:central-implies-GEM}.
However, this argument depends on a lot of machinery that is not available in homotopy
type theory, and we don't know if it is true in any $\infty$-topos.
In \cite[Prop.~5.9]{Bousfield1997}, Bousfield shows that a GEM $A$ is central if
and only if it has \emph{transitory} homotopy groups:
$\Hom(\pi_mA, \pi_nA) = 0$ for $n \geq m+1$ and
$\Ext(\pi_mA, \pi_nA) = 0$ for $n \geq m+2$.
Again, we don't know if this is true in our setting.

In classical homotopy theory, our result that any central type has a unique delooping
(\cref{thm:uniquely-deloopable}) can be strengthened to the statement that
any pointed, connected space with a unique H-space structure has a unique delooping.
The proof proceeds by showing that such a space has a unique $A_{\infty}$-structure.
(We thank the referee for sketching this argument for us.)
However, the theory of $A_{\infty}$-structures is not available in homotopy type theory,
due to the required coherences.
In homotopy type theory, David Wärn~\cite{Warn23} has used a different approach to
prove the stronger result.

\addtocontents{toc}{\SkipTocEntry}
\subsection*{Formalization}
Many of the main results of this paper have been formalized in the Coq proof assistant using the Coq-HoTT library~\cite{coqhott}.
This includes much of the basic theory that we develop related to H-spaces, and these results have already been merged into the Coq-HoTT library under the \(\texttt{Homotopy.HSpace}\) namespace.
Notably, \cref{thm:space-of-H-space-structures} has been formalized as \href{https://github.com/HoTT/Coq-HoTT/blob/1032ee0e7d2b4838b068944443c04318813bbcd9/theories/Homotopy/HSpace/Moduli.v\#L130}{\(\texttt{equiv\_cohhspace\_ppmap}\)}, modulo the smash-hom adjunction which has been formalized by Floris van Doorn in Lean 2~\cite{vanDoorn2018}.
In a separate repository~\cite{central}, we have also formalized \cref{ex:EM}, \cref{thm:uniquely-deloopable}, \cref{thm:BAutpl-H-space}, and \cref{cor:central-infinite-loop-space}, along with their dependencies.
In that repository, we have also formalized the equivalence stated in \cref{thm:Omega-equiv},
but with both sides restricted to pointed equivalences, which is all that is needed
for \cref{cor:central-infinite-loop-space}.
These results will also be submitted to the Coq-HoTT library.

\addtocontents{toc}{\SkipTocEntry}
\subsection*{Outline}
In \cref{sec:H-spaces}, we give results about H-spaces which do not depend on centrality,
including a description of the moduli space of H-space structures,
results about Whitehead products and H-space structures on spheres,
and results about unique H-space structures.
In \cref{sec:central-types}, we define central types,
show that central types have a unique H-space structure,
give a characterization of which H-spaces are central,
and prove other results needed in the next section.
\cref{sec:bands-and-torsors} is the heart of the paper.
It defines the type $\BAutpl(A)$ of bands for a central type $A$,
shows that it is a unique delooping of $A$,
proves that it is an H-space under a tensoring operation,
and shows that central types and their self-maps
are uniquely infinitely deloopable.
We also give the alternate description of the delooping in terms of $A$-torsors.
Finally, \cref{sec:examples} gives examples and non-examples of central types,
mostly related to Eilenberg--Mac~Lane spaces and their products.
It also contains a construction of $K(G,1)$ as a type of torsors,
and defines an H-space structure on this type when $G$ is abelian,
paralleling the H-space structure on $\BAutpl(A)$.
It ends by showing that
every truncated, central type with at most two non-zero homotopy groups,
both of which are finitely presented,
is a product of Eilenberg--Mac~Lane spaces.

\addtocontents{toc}{\SkipTocEntry}
\subsection*{Notation and conventions}
In general, we follow the notation used in~\cite{HoTTBook}.
For example, we write path composition in diagrammatic order:
given paths \(p : x = y\) and \(q : y = z\), their composite is \( p \ct q\).
The reflexivity path is written \(\refl\).

We write $\Type$ for a fixed univalent universe of types, and frequently
make use of univalence.
We also use function extensionality without always explicitly mentioning it.

Given a type \(A\) and an element \(a : A\), we write \((A,a)\) for the type $A$ pointed at $a$.
If \(A\) is already a pointed type with unspecified base point, then we write \(\pt\) for the base point.
If \(A\) and \(B\) are pointed types, and \(f, g : A \to_* B\) are pointed maps, then \(f =_* g\) is the type of pointed homotopies between \(f\) and \(g\).

If \(A\) is an H-space, then we write \(x \cdot y\) for the product of two elements \(x, y : A\) (unless another notation for the multiplication is given).
For a pointed type $A$, we write $\HSpace{A}$ for the type of H-space structures
on $A$ with the base point as the identity element (\cref{defn:H-space}).

We write \(\Sb^n\) for the \(n\)-sphere.

\addtocontents{toc}{\SkipTocEntry}
\subsection*{Acknowledgements.}
We would like to thank David Jaz Myers for many lively discussions on the content of this paper, especially related to bands and torsors.
We also thank David Wärn for fruitful discussions and for early drafts
of his related paper~\cite{Warn23}.
Finally, we thank the referee for many comments that helped to improve the paper,
in particular by relating it to other work in classical homotopy theory.

Egbert Rijke gratefully acknowledges the support by the Air Force Office of Scientific Research through grant FA9550-21-1-0024, and support by the Slovenian Research Agency research programme P1-0294.
Dan Christensen and Jarl Flaten both acknowledge the support
of the Natural Sciences and Engineering Research Council of Canada
(NSERC), RGPIN-2022-04739.

\section{H-spaces and evaluation fibrations}\label{sec:H-spaces}

In \cref{ss:H-space-structures}, we begin by recalling the notion of a (coherent) \emph{H-space structure} on a pointed type \(A\), give several equivalent
descriptions of the type of H-space structures, and prove basic results
that will be useful in the rest of the paper.

In \cref{ss:extensions-and-whitehead-products},
we discuss the type of pointed extensions of a map $B \vee C \pto A$ to $B \times C$,
and show that the type of H-space structures on $A$ is equivalent to the type of
pointed extensions of the fold map.
We relate the existence of extensions to the vanishing of Whitehead products,
and use this to show that there are no H-space structures on even spheres except \(\Sb^0\).
In addition, we show that for any \(n\)-connected H-space \(A\),
the Freudenthal map \( \pi_{2n+1}(A) \to \pi_{2n+2}(\Sigma A) \) is an isomorphism,
not just a surjection.

In \cref{ss:evaluation-fibrations}, we study \emph{evaluation fibrations}.
We show that for a left-invertible H-space $A$, various evaluation fibrations
are trivial, and use this to show that the type of H-space structures is equivalent to
$A \wedge A \pto A$, generalizing a classical formula of Arkowitz--Curjel and Copeland.
It immediately follows, for example, that the type of H-space structures on the \(3\)-sphere is \( \Omega^6 \Sb^3 \).

\cref{ss:unique-H-space-structures} is a short section which studies the
case when the type of H-space structures is contractible.
We stress that this is not the same as \(\HSpace{A}\) having a single component,
which is what is classically meant by ``\(A\) has a unique H-space structure.''
This situation is interesting in its own right.
We show that such H-space structures are associative and coherently abelian,
and we prove that all pointed self-maps of $A$ are automatically H-space maps.

\subsection{H-space structures}\label{ss:H-space-structures}
We begin by giving the notion of H-space structure that we will consider in this paper.
\pagebreak[2]

\begin{defn} \label{defn:H-space}
  Let \(A\) be a pointed type.
  \begin{enumerate}
  \item A \textbf{non-coherent H-space structure} on \(A\) consists of a binary operation \( \mu : A \to A \to A \), a \textbf{left unit law} \( \mu_l : \mu(\pt, -) = \id_A \) and a \textbf{right unit law} \( \mu_r : \mu(-, \pt) = \id_A \).
  \item A \textbf{(coherent) H-space structure} on \(A\) consists of a non-coherent H-space structure \(\mu\) on \(A\) along with a \textbf{coherence} \( \mu_{lr} : \mu_l(\pt) =_{\mu(\pt, \pt) = \pt} \mu_r(\pt) \).
  \item We write \define{$\HSpace{A}$} for the type of (coherent) H-space structures on $A$.
  \end{enumerate}
  When the H-space structure is clear from the context we may write \(x \cdot y :\jeq \mu(x,y) \).
  Any H-space structure yields a non-coherent H-space structure by forgetting the coherence.
  Suppose \(A\) has a (non)coherent H-space structure \(\mu\).
  \begin{enumerate}[resume]
  \item  If \( \mu(a,-) : A \to A \) is an equivalence for all \( a : A\), then \( \mu \) is \textbf{left-invertible}, and we write \(x \linv y :\jeq \mu(x,-)^{-1}(y)\).
    \textbf{Right-invertible} is defined dually, and we write \(x \rinv y :\jeq \mu(-,y)^{-1}(x)\).
  \item The \textbf{twist} \( \mu^T \) of \( \mu \) is the natural (non)coherent H-space structure with operation
    \[ \mu^T(a_0, a_1) :\jeq \mu(a_1, a_0). \]
  \end{enumerate}
\end{defn}

When we say ``H-space'' we mean the coherent notion---we will only say ``coherent'' for emphasis.
The notion of H-space structure considered in~\cite[Def.~8.5.4]{HoTTBook} corresponds to our \emph{non-coherent} H-space structures.
While many constructions can be carried out for non-coherent H-spaces (such as the Hopf construction), the coherent case is more natural for our purposes.

The type of H-space structures on a pointed type has several equivalent descriptions.

\begin{prop}\label{prop:H-space-structures}
Let $A$ be a pointed type.  The following types are equivalent:
\begin{enumerate}
\item The type $\HSpace{A}$ of H-space structures on $A$.
\item The type of pointed sections of the pointed map \( \ev : (A \to A, \id_A) \to_* A \)
      which sends an unpointed map $f$ to $f(\pt)$.
\item The type of families $\mu : \prod_{a:A} (A, \pt) \to_* (A,a)$ of pointed maps
      equipped with a pointed homotopy $\mu(\pt) =_* \id_A$.
\end{enumerate}
Moreover, the type of non-coherent H-space structures on $A$ is
equivalent to the type of families $\mu : \prod_{a:A} (A, \pt) \to_* (A,a)$
of pointed maps equipped with an unpointed homotopy $\mu(\pt) = \id_A$.
\end{prop}

\begin{proof}
All of these claims are simply reshuffling of data combined with function
extensionality.
For example, given a pointed section $s$ of $\ev$, the underlying map
of $s$ gives the binary operation $\mu$, the pointedness of $s$ gives
the left unit law $\mu_l$, the homotopy $\ev \circ s = \id_A$ gives
the right unit law, and the pointedness of that homotopy gives the coherence.
The data in (3) is almost identical.
In particular, it is the pointedness of the homotopy $\mu(\pt) =_* \id_A$
that corresponds to coherence, and omitting this gives the description of
non-coherent H-space structures.
\end{proof}

\begin{rmk}\label{rmk:homogeneous}
Note that $A$ is left-invertible if and only if the maps in (3) are equivalences.
We say that $A$ is a \textbf{homogeneous type} if it is equipped with a
family $\mu : \prod_{a:A} (A, \pt) \simeq_* (A,a)$, and so we see that every
left-invertible H-space is homogeneous.
\end{rmk}

We have the following converse.

\begin{lem}\label{lem:non-coherent-to-coherent}
  Let $A$ be a pointed type equipped with a family
  $\mu : \prod_{a:A} (A, \pt) \to_* (A,a)$
  such that $\mu(\pt)$ is an equivalence.
  Then $A$ can be given the structure of an H-space.
\end{lem}

\begin{proof}
  The new family defined by $\mu'(a) = \mu(a) \circ \mu(\pt)^{-1}$ has
  the property that $\mu'(\pt) =_* \id_A$, and therefore gives an H-space
  structure on $A$.
\end{proof}

Note that this lemma shows that every homogeneous type can be given the
structure of a (left-invertible) H-space, and that every non-coherent
H-space can be given the structure of an H-space.
In the latter case, since the original $\mu(\pt)$ is equal to the identity
map (as unpointed maps), the new H-space retains the same binary operation
$\mu$ and left unit law $\mu_l$, but has a different right unit law $\mu_r$.
While the types of non-coherent and coherent H-space structures on a pointed type are logically equivalent,
they are not generally equivalent as types~(see \cref{rmk:non-coherent-HSpace}).

\medskip

We'll be interested in abelian and associative H-spaces later on.

\begin{defn}\label{defn:abelian}
  Let \(A\) be an H-space with multiplication \(\mu\).
  \begin{enumerate}
  \item If there is a homotopy \( h : \prod_{a, b} \mu(a,b) = \mu(b,a) \), then \(\mu\) is \textbf{abelian}.
  \item If \(\mu = \mu^T \) in \(\HSpace{A}\), then \(\mu\) is \textbf{coherently abelian}.

  \item If there is a homotopy \( \alpha : \prod_{a,b,c : A} \mu(\mu(a,b),c) = \mu(a, \mu(b,c))\), then \(\mu\) is \textbf{associative}.
  \end{enumerate}
\end{defn}

The following lemma gives a convenient way of constructing abelian H-space structures, and will be used in \cref{thm:BAutpl-H-space}.

\begin{lem}\label{lem:symmetry}
  Let \(A\) be a pointed type with a binary operation \(\mu\), a symmetry \(\sigma_{a,b} : \mu(a, b) = \mu(b,a) \) for every \(a, b : A\) such that \(\sigma_{\pt, \pt} = \refl\), and a left unit law \(\mu_l : \mu(\pt, -) = \id_A\).
  Then \(A\) becomes an abelian H-space with the right unit law induced by symmetry.
\end{lem}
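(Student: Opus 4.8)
The plan is to keep the multiplication $\mu$ and the given left unit law $\mu_l$, and to produce the remaining data by conjugating $\mu_l$ through the symmetry. For each $a : A$ I set
\[
  \mu_r(a) :\jeq \sigma_{a,\pt} \ct \mu_l(a),
\]
which is well-typed because $\sigma_{a,\pt} : \mu(a,\pt) = \mu(\pt,a)$ and $\mu_l(a) : \mu(\pt,a) = a$, so that $\mu_r(a) : \mu(a,\pt) = a$. This is the right unit law ``induced by symmetry.'' Together with $\mu$ and $\mu_l$ it constitutes a non-coherent H-space structure in the sense of \cref{defn:H-space}, and the family $\sigma$ is itself the homotopy $\Pi_{a,b}\,\mu(a,b) = \mu(b,a)$ witnessing abelianness in the sense of \cref{defn:abelian}. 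It therefore remains only to promote this to a coherent structure by supplying the coherence datum.

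Concretely, I must construct a path $\mu_{lr} : \mu_l(\pt) =_{\mu(\pt,\pt)=\pt} \mu_r(\pt)$. By definition $\mu_r(\pt) \jeq \sigma_{\pt,\pt} \ct \mu_l(\pt)$, and this is exactly where the basepoint hypothesis $\sigma_{\pt,\pt} = \refl$ is used. Applying $\ap$ along the function $q \mapsto q \ct \mu_l(\pt)$ to this equality yields a path $\sigma_{\pt,\pt} \ct \mu_l(\pt) = \refl \ct \mu_l(\pt)$, and the left unit law of path concatenation gives $\refl \ct \mu_l(\pt) = \mu_l(\pt)$. Concatenating these two paths and inverting produces the desired $\mu_{lr}$. (If $\sigma_{\pt,\pt} = \refl$ is read as a judgmental equality, then $\mu_r(\pt)$ already reduces to $\refl \ct \mu_l(\pt)$ and only the unit law of concatenation is needed.)

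I expect no genuine difficulty here: once $\mu_r$ is defined as above, the abelianness homotopy is literally $\sigma$, and the non-coherent structure is immediate. The only bookkeeping lies in the coherence step, whose content is precisely that the right unit law evaluated at the basepoint collapses onto the left unit law because $\sigma_{\pt,\pt} = \refl$. This is the sole place the basepoint condition on $\sigma$ is invoked, and it is exactly what makes the construction of a coherent, abelian H-space go through.
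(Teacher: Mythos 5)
Your proposal is correct and matches the paper's proof: both define $\mu_r(a) :\jeq \sigma_{a,\pt} \ct \mu_l(a)$, take $\sigma$ itself as the abelianness witness, and obtain the coherence at the basepoint from the hypothesis $\sigma_{\pt,\pt} = \refl$ together with the unit law for path concatenation. The paper phrases the coherence step as filling a triangle with $\sigma_{\pt,\pt}$ on top and $\mu_l$ on both legs, which is the same computation you carry out.
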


\begin{proof}
  For $b : A$, the right unit law is given by the path $\sigma_{b,\pt} \ct \mu_l(b)$ of type $\mu(b, \pt) = b$.
  For coherence we need to show that the following triangle commutes:
  \[ \begin{tikzcd}
      \mu(\pt, \pt) \ar[dr, "\mu_l" swap] \ar[rr, "\sigma_{\pt, \pt}"] && \mu(\pt, \pt) \ar[dl, "\mu_l"] \\
      & \pt \period
    \end{tikzcd} \]
  By our assumption that \(\sigma_{\pt, \pt} = \refl\), the triangle is filled \(\refl_{\mu_l}\).
\end{proof}

We collect a few basic facts about H-spaces. The following lemma generalizes a result of Evan Cavallo,
who formalized the fact that unpointed homotopies between pointed maps into a \emph{homogeneous type} $A$ can be upgraded to pointed homotopies. Being a homogeneous type is logically equivalent
to being a left-invertible H-space~\cite{CavalloHomogeneous}.
Here we do not need to assume left-invertibility, and we factor this observation through a further generalization.

\begin{lem}  \label{lem:unpointed-to-pointed-homotopy}
  Let \(A\) be a pointed type, and consider the following three conditions:
  \begin{enumerate}
  \item \(A\) is an H-space.
  \item The evaluation map \((\id_A = \id_A)\to (\pt=\pt)\) sending a homotopy $h$ to $h_{\pt}$ has a section.
  \item For every pointed type $B$ and pointed maps $f,g:B\to_\ast A$, there is a map \((f=g)\to (f=_\ast g)\) which upgrades unpointed homotopies to pointed homotopies.
  \end{enumerate}
  Then (1) implies (2) and (2) implies (3).
\end{lem}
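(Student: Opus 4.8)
The plan is to prove the two implications separately, reading the evaluation map of~(2) through function extensionality: a path $\id_A = \id_A$ is the same datum as a self-homotopy of $\id_A$ with components $q(a) : a = a$, and the map $(\id_A = \id_A)\to(\pt = \pt)$ sends such a $q$ to its value $q(\pt)$ at the basepoint. Thus (2) asks for an operation turning a loop $p : \pt = \pt$ into a self-homotopy of $\id_A$ whose value at $\pt$ is $p$. Condition~(2) is the hinge mediating between the algebra of~(1) and the pointedness statement of~(3), and it is where the coherence of the H-space structure is spent.

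For $(1) \Rightarrow (2)$: given an H-space structure $(\mu,\mu_l,\mu_r,\mu_{lr})$ and a loop $p : \pt = \pt$, I would set
\[ H_p(a) :\jeq \mu_r(a)^{-1} \ct \ap_{\mu(a,-)}(p) \ct \mu_r(a), \]
a path $a = a$ for each $a$, using that $\mu(a,-)$ moves the basepoint only up to $\mu_r(a)$. This is a self-homotopy of $\id_A$, hence a point of $\id_A = \id_A$, and $p \mapsto H_p$ is the candidate section. It remains to check $H_p(\pt) = p$. Evaluating at $\pt$ and using naturality of the homotopy $\mu_l : \mu(\pt,-) = \id_A$ to rewrite $\ap_{\mu(\pt,-)}(p)$ as $\mu_l(\pt) \ct p \ct \mu_l(\pt)^{-1}$, the value $H_p(\pt)$ becomes $\mu_r(\pt)^{-1} \ct \mu_l(\pt) \ct p \ct \mu_l(\pt)^{-1} \ct \mu_r(\pt)$, and the coherence $\mu_{lr} : \mu_l(\pt) = \mu_r(\pt)$ cancels the conjugating paths to leave $p$. (Conceptually, $a \mapsto \mu(a,-)$ is a pointed section of $\mathrm{ev}_\pt : (A\to A)\to_\ast A$, pointed by $\mu_l$ with $\mathrm{ev}_\pt$-retraction $\mu_r$, and applying $\Omega$ recovers $H_{(-)}$; the coherence is exactly what makes that section pointed.) The main obstacle here is this basepoint identity: without $\mu_{lr}$ one only obtains $H_p(\pt) = p$ up to conjugation, so coherence is indispensable.

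For $(2) \Rightarrow (3)$: fix a section $s$ of the evaluation map and pointed maps $f = (f_0, f_\pt)$ and $g = (g_0, g_\pt)$, where $f_\pt : f_0(\pt) = \pt$ and $g_\pt : g_0(\pt) = \pt$. A pointed homotopy $f =_\ast g$ consists of a homotopy $H : f_0 = g_0$ together with a proof that $H(\pt) = f_\pt \ct g_\pt^{-1}$, whereas an unpointed homotopy (the path $f_0 = g_0$) carries no constraint on $H(\pt)$. So given an unpointed $H$, the task is to replace it by a homotopic $H'$ with the corrected basepoint value. I would measure the defect by the loop $\delta :\jeq f_\pt^{-1} \ct H(\pt) \ct g_\pt : \pt = \pt$, feed $\delta^{-1}$ through the section to obtain a self-homotopy $K :\jeq s(\delta^{-1})$ of $\id_A$ with $K(\pt) = \delta^{-1}$, and set $H'(b) :\jeq H(b) \ct K(g_0(b))$, again a homotopy $f_0 = g_0$.

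To finish I would check that $H'$ is pointed, i.e.\ that its defect $f_\pt^{-1} \ct H'(\pt) \ct g_\pt$ vanishes. Using naturality of $K$ as a self-homotopy of $\id_A$ along the path $g_\pt$, which gives $K(g_0(\pt)) \ct g_\pt = g_\pt \ct K(\pt) = g_\pt \ct \delta^{-1}$, the defect of $H'$ collapses to $\delta \ct \delta^{-1} = \refl$. Hence $H'$ together with this computation is the sought pointed homotopy, and $H \mapsto H'$ is the required map $(f = g) \to (f =_\ast g)$. The only obstacle in this part is bookkeeping: choosing the correct whiskering side and the correct orientation of the naturality square so that $\delta$ and its correction cancel rather than accumulate.
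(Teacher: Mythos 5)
Your proof is correct and follows essentially the same route as the paper's. For $(1)\Rightarrow(2)$ your construction (conjugating $\ap$ of the loop by $\mu_r$) and the verification via the coherence $\mu_{lr}$ plus naturality of $\mu_l$ are exactly the paper's argument; for $(2)\Rightarrow(3)$ the paper first path-inducts on the unpointed homotopy and on one pointing path so that the section property gives the basepoint equation on the nose, whereas you correct the defect loop $\delta$ directly and check pointedness with a naturality square---the same mechanism, just without the normalization step.
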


\begin{proof}
  To show that (1) implies (2), suppose that \(A\) is an H-space.
  By~\cref{prop:H-space-structures} we have a pointed section $s$ of
  \( \ev : (A \to A, \id_A) \to_* A \).
  The evaluation map in (2) is $\Omega \ev$, and has a (pointed) section $\Omega s$.

  We next show that (2) implies (3).  Let $f, g : B \pto A$ be pointed maps
  and let $H : f = g$ be an unpointed homotopy.
  By path induction on $H$, we can assume we have a single function $f : B \to A$
  with two pointings, $f_{\pt}$ and $f_{\pt}' : f(\pt) = \pt$.
  Our goal is to define a homotopy $K : f = f$ such that $K_{\pt} = r$, where $r :\jeq f_{\pt} \cdot \overline{f_{\pt}'} : f(\pt) = f(\pt)$.
  By path induction on $f_{\pt}$, we can assume that the base point of $A$ is $f(\pt)$.
  By (2), we have \(s:(f(\pt)=f(\pt))\to (\id_A=\id_A)\) such that \(s(p,f(\pt))=p\)
  for all \(p:f(\pt)=f(\pt)\).
  For $b : B$, define $K_b$ to be $s(r, f(b))$.
  Then $K_{\pt} = r$, as required.
\end{proof}

The following result is straightforward and has been formalized, so we do not
include a proof.

\begin{prop} \label{prop:pmap-hspace}
  Suppose \(A\) is a (left-invertible) H-space.
  For any pointed type \(B\), the mapping type \(B \to_* A\) based at the constant map is naturally a (left-invertible) H-space under pointwise multiplication.
  Similarly, for any type $B$, the mapping type $B \to A$ based at the constant map
  is a (left-invertible) H-space under pointwise multiplication.
 \qed
\end{prop}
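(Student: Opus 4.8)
\subsection*{Proof proposal}

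The plan is to define every piece of the H-space structure on the mapping types \emph{pointwise} from \(\mu\), \(\mu_l\), \(\mu_r\) and \(\mu_{lr}\), and to transport this pointwise data up to the mapping type using function extensionality. I would handle the unpointed case \(B \to A\) first, since it is cleaner. Its basepoint is the constant map \(\mathrm{const}_\pt\), and I set \(\nu(f, g) :\jeq \lambda b.\, \mu(f(b), g(b))\). The unit laws \(\nu_l\) and \(\nu_r\) are obtained by applying function extensionality to the pointwise homotopies \(b \mapsto \mu_l(f(b))\) and \(b \mapsto \mu_r(f(b))\), and the coherence \(\nu_{lr}\) follows by applying function extensionality to the pointwise coherence \(b \mapsto \mu_{lr}\), using that function extensionality takes homotopic homotopies to equal paths. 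For left-invertibility, note that \(\nu(f, -)\) is the map \(g \mapsto \lambda b.\, \mu(f(b), g(b))\), which is the pointwise application of the family of equivalences \((\mu(f(b), -))_{b : B}\); pointwise application of a family of equivalences is an equivalence of function types, so \(\nu(f, -)\) is an equivalence.

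Next I would treat the pointed case, writing \(B \to_* A :\jeq \Sigma_{f : B \to A}(f(\pt) = \pt)\) with basepoint \((\mathrm{const}_\pt, \refl)\). The multiplication \(\nu_*\) sends \((f, p)\) and \((g, q)\) to \((\nu(f, g), P_{p,q})\), where I take the pointing
\[ P_{p,q} :\jeq \ap_{\mu(-, g(\pt))}(p) \ct \mu_l(g(\pt)) \ct q \,:\, \mu(f(\pt), g(\pt)) = \pt. \]
The forgetful map \((B \to_* A) \to (B \to A)\) then strictly preserves the multiplication, so a path of pointed maps is a pair consisting of an underlying path in \(B \to A\) together with a compatibility of pointings. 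For the left unit law I reuse \(\nu_l\) as the underlying path; its pointing compatibility holds essentially definitionally, since \(P_{\refl, q}\) reduces to \(\mu_l(g(\pt)) \ct q\). For the right unit law I reuse \(\nu_r\), and here the pointing compatibility amounts to the identity \(\ap_{\mu(-, \pt)}(p) \ct \mu_l(\pt) = \mu_r(f(\pt)) \ct p\), which follows from naturality of the homotopy \(\mu_r\) together with the coherence \(\mu_{lr} : \mu_l(\pt) = \mu_r(\pt)\). This is precisely where coherence is needed.

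For left-invertibility in the pointed case, I would use the fiberwise criterion applied to the commuting square relating \(\nu_*((f,p), -)\) to \(\nu(f,-)\) along the forgetful map to \(B \to A\). The map downstairs \(\nu(f, -)\) is an equivalence by the unpointed case, while on the fibers of the forgetful map it acts by \(q \mapsto P_{p,q}\), i.e.\ by whiskering \(q\) with a fixed path; whiskering is an equivalence of identity types, so the total map is an equivalence. Naturality in \(B\) holds because every construction is stable under precomposition with a pointed map \(B' \to_* B\).

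I expect the coherence \(\nu_{*,lr}\) for the pointed multiplication to be the fiddliest step: it requires checking that the two pointed unit laws agree at the basepoint, which unwinds to an equality of the pointing-compatibility data lying over the unpointed coherence \(\nu_{lr}\), and verifying this means tracking how the identity types of the \(\Sigma\)-type \(B \to_* A\) decompose. Everything else is routine pointwise bookkeeping; the only essential use of the coherence axiom is in matching the right unit law's pointing, as indicated above.
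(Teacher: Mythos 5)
Your proposal is correct: the paper omits the proof of this proposition entirely (calling it straightforward and formalized), and your pointwise construction—multiplication, unit laws, and coherence defined componentwise and transported through function extensionality, with the pointed case handled as a $\Sigma$-type over the unpointed one and left-invertibility obtained fiberwise—is exactly the evident argument being alluded to. The one step you leave as "expected," the coherence for the pointed multiplication, does go through as you describe: at the basepoint both pointing-compatibility proofs reduce (via the computation rule for function extensionality) to paths lying over $\mu_{lr}$, so no further idea is needed, and your identification of the right unit law as the unique place where the coherence $\mu_{lr}$ is essential is accurate.
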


In particular, if \(A\) is left-invertible then for any \(f : B \to_* A\) there is a self-equivalence of \(B \to_* A\) which sends the constant map to \(f\)---namely, the pointwise multiplication by \(f\) on the left.

Another way to produce H-space structures is via the following result:

\begin{prop}\label{prop:retract-of-hspace}
If $A$ is an H-space and $A'$ is a pointed retract of $A$, then $A'$ is an H-space.
\end{prop}

\begin{proof}
Assume we have $s : A' \pto A$, $r : A \pto A'$ and $h : r \circ s =_* \id$.
We define a multiplication on $A'$ by sending
$(a,b) : A' \times A'$ to $r(s(a) \cdot s(b))$.
The left unit law is the composite path
\[
  \begin{tikzcd}
          r(s(\pt) \cdot s(b)) \arrow[r,equals,"\ap_r \ap_{\mu(-,s(b))} s_\pt"]
   &[4em] r(\pt \cdot s(b)) \arrow[r,equals,"\ap_r \mu_l(s(b))"]
   &[2.5em] r(s(b)) \arrow[r,equals,"h_b"]
   &[1em] b \comma
  \end{tikzcd}
\]
and the right unit law is $\ap_r \ap_{\mu(s(a),-)} s_{\pt} \ct \ap_r \mu_r(s(a)) \ct h_a$.
To show coherence, we must show that these are equal when $a \jeq b \jeq \pt$.
By cancelling the common $h_{\pt}$ and removing the $\ap_r$, we see that it suffices to prove that
\[
  \ap_{\mu(-,s(\pt))} s_{\pt} \ct \mu_l(s(\pt)) = \ap_{\mu(s(\pt),-)} s_{\pt} \ct \mu_r(s(\pt)) .
\]
To prove this, we compose both sides with $s_{\pt}$ on the right, use naturality
of $\mu_l$ and $\mu_r$, coherence, and the naturality of $\ap_{\mu}$ in its two variables.
(We can also appeal to \cref{lem:non-coherent-to-coherent} to avoid this part of the
argument.)
\end{proof}

\subsection{\texorpdfstring{$(\alpha,\beta)$}{(α,β)}-extensions and Whitehead products}
\label{ss:extensions-and-whitehead-products}

We begin by defining \emph{$(\alpha,\beta)$-extensions},
and then use them to give a different description of the type of H-space structures
on a pointed type $A$.
Then we relate them to Whitehead products, and use Brunerie's computation of
Whitehead products to rule out H-space structures on even spheres.
To do this, we prove some results about Whitehead products from~\cite{Whitehead46} which relate to H-spaces.
Finally, we also show that for an $n$-connected H-space, the Freudenthal map
\( \pi_{2n+1}(A) \to \pi_{2n+2}(\Sigma A) \) is an isomorphism, not just a surjection.
None of the results in this section are used in the rest of the paper.

\begin{defn}\label{defn:alpha-beta-extension}
  Let \(\alpha : B \pto A\) and \(\beta : C \pto A\) be pointed maps.
  An \textbf{\boldmath{\( (\alpha, \beta) \)}-extension} is a pointed map \(f : B \times C \to_* A\) equipped with a pointed homotopy filling the following diagram:
  \[ \begin{tikzcd}
      B \lor C \ar[rr, "\alpha \lor \beta"] \ar[dr] && A \\
      & B \times C \period \ar[ur, "f" swap]
  \end{tikzcd} \]
\end{defn}

\begin{rmk}\label{rmk:unpointed-extensions}
  It is equivalent to consider the type of \emph{unpointed} $(\alpha,\beta)$-extensions
  consisting of unpointed maps $f : B \times C \to A$ and unpointed fillers.
  The additional data in a pointed extension is a path $f_{\pt} : f(\pt,\pt) = \pt$ and
  a 2-path that determines $f_{\pt}$ in terms of the other data.  These form a
  contractible pair.
\end{rmk}

When \(\alpha\) and \(\beta\) are maps between spheres, Whitehead instead says that \(f\) is ``of type \((\alpha, \beta)\)'' but we prefer to stress that we work with a structure and not a property.

We now relate $(\alpha, \beta)$-extensions to the following generalization of
the map $\ev$.
Given a pointed map \(f : B \pto A\), we again write $\ev$ for the map
\( (B \to A, f) \pto A \)
which evaluates at \(\pt : B\).
This map is pointed since \(f\) is.
Recall that the case where $f \jeq \id_A$ played a key role in
\cref{prop:H-space-structures}.

\begin{defn}
  Let \(e : X \pto A\) and \(g : Y \pto A\) be pointed maps.
  A \textbf{pointed lift of \boldmath{\(g\)} through \boldmath{\(e\)}} consists of a pointed map \(s : Y \pto X\) along with a pointed homotopy \(e \circ s =_* g\).
  When \(g \jeq \id\), then we recover the notion of a pointed section of \(e\).
\end{defn}

\begin{prop} \label{prop:equiv-extensions-pointed-lifts}
  Let \(\alpha : B \pto A\) and \(\beta : C \pto A\) be pointed maps.
  The type of \((\alpha,\beta)\)-extensions is equivalent to the type of pointed lifts of \(\beta\) through \(\ev : (B \to A, \alpha) \pto A\).
  \qed
\end{prop}

The proof of the statement is a straightforward reshuffling of data along
with cancellation of a contractible pair.
Diagrammatically, it gives a correspondence between the dashed arrows below, with pointed homotopies filling the triangles:
\[ \begin{tikzcd}
    B \lor C \rar["f \lor g"] \dar & A && & (B \to A, f) \dar["\ev"] \\
    B \times C \ar[ur, dashed] & && C \rar["g"] \ar[ur, dashed] & A
\end{tikzcd} \]

\begin{prop} \label{prop:extension-H-space}
  H-space structures on a pointed type \(A\) correspond to \((\id_{A}, \id_{A})\)-extensions.
\end{prop}

\begin{proof}
This follows from \cref{prop:H-space-structures,prop:equiv-extensions-pointed-lifts}.
\end{proof}

\begin{lem}\label{lem:extension-exists-H-space}
  If $A$ is an H-space, then there is an $(\alpha,\beta)$-extension for
  every pair \(\alpha : B \pto A\) and \(\beta : C \pto A\) of pointed maps.
\end{lem}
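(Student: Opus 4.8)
The plan is to write down the evident candidate coming from the multiplication and to check that the coherence of the H-space is exactly what fills the wedge. Given $\alpha : B \pto A$ and $\beta : C \pto A$, I set
\[ f : B \times C \to A, \qquad f(b,c) :\jeq \alpha(b) \cdot \beta(c) = \mu(\alpha(b), \beta(c)). \]
By \cref{rmk:unpointed-extensions} it suffices to build an \emph{unpointed} $(\alpha,\beta)$-extension, so I may ignore the pointing of $f$ and the accompanying $2$-path and concentrate on a filler, that is, a homotopy between the restriction of $f$ along $B \lor C \to B \times C$ and the map $\alpha \lor \beta$. Equivalently, $f = \mu \circ (\alpha \times \beta)$, and one could instead transport the $(\id_A,\id_A)$-extension supplied by \cref{lem:extension-H-space} along the naturality square for the wedge inclusion; in that formulation the coherence enters through \cref{lem:extension-H-space}. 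I prefer the hands-on version below, which isolates the role of the coherence.

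Since $B \lor C$ is the pushout defining the wedge, a homotopy over it is given by three pieces of data: a homotopy over the $B$-arm, a homotopy over the $C$-arm, and a compatibility $2$-cell at the wedge point. Over the $B$-arm the restriction of $f$ is $b \mapsto \mu(\alpha(b), \beta(\pt))$; rewriting $\beta(\pt)$ as $\pt$ via the pointing of $\beta$ and then applying the right unit law gives the homotopy $\ap_{\mu(\alpha(b),-)}(\beta_{\pt}) \ct \mu_r(\alpha(b))$ to $\alpha(b)$. Dually, over the $C$-arm the restriction is $c \mapsto \mu(\alpha(\pt), \beta(c))$, and $\ap_{\mu(-,\beta(c))}(\alpha_{\pt}) \ct \mu_l(\beta(c))$ gives a homotopy to $\beta(c)$ using the pointing of $\alpha$ and the left unit law.

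The compatibility $2$-cell at the wedge point is the only place the H-space coherence is used, and I expect it to be the main obstacle. To see it clearly I would do path induction on the pointings $\alpha_{\pt}$ and $\beta_{\pt}$, which replaces $\alpha(\pt)$ and $\beta(\pt)$ by $\pt$ (this leaves $\mu$ and its unit laws, all based at $\pt$, untouched). After this reduction the two arm-homotopies at the wedge point become $\mu_r(\pt)$ and $\mu_l(\pt)$, the glue path of $\alpha \lor \beta$ at the wedge point becomes $\refl$, and the square one must fill collapses to the single equation $\mu_r(\pt) = \mu_l(\pt)$. This is precisely the coherence datum $\mu_{lr}$ of \cref{defn:H-space}, so the cell exists. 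Assembling the two arm-homotopies with this cell through the universal property of the wedge yields the filler, hence the desired extension; the remaining $\ap$-bookkeeping and the reduction of the wedge-point square are routine.
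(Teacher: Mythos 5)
Your proof is correct and takes essentially the same approach as the paper: the paper's proof also takes $f(b,c) :\jeq \alpha(b) \cdot \beta(c)$ and uses the coherence (together with unit-law bookkeeping, which you handle by path induction on $\alpha_{\pt}$ and $\beta_{\pt}$ rather than by naturality) to produce the filler, and it even records your parenthetical alternative---factoring through the $(\id_A,\id_A)$-extension via $\alpha \times \beta$---as a second route.
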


\begin{proof}
  Using naturality of the left and right unit laws and coherence, one can show that
  the map \((b,c) \mapsto \alpha(b) \cdot \beta(c) : B \times C \to A\) is an
  \((\alpha, \beta)\)-extension.
  Alternatively, observe that the $(\alpha,\beta)$-extension problem factors
  through the $(\id_A,\id_A)$-extension problem via the map
  $\alpha \times \beta : B \times C \to A \times A$.
\end{proof}

These results explain the relation between H-space structures and \((\alpha, \beta)\)-extensions, which are in turn related to Whitehead products via the next two results.
(See~\cite[Section~3.3]{Brunerie2016} for the definition of Whitehead products.)

\begin{prop}[{\cite[Corollary~3.5]{Whitehead46}}] \label{prop:alpha-beta-extension-iff-constant}
  Let \(m, n > 0\) be natural numbers and consider two pointed maps
  \(\alpha : \Sb^m \pto A\) and \(\beta : \Sb^n \pto A\).
  The type of \((\alpha, \beta)\)-extensions is equivalent to the type of witnesses that the map
  \( [\alpha, \beta] : \Sb^{m+n-1} \to_* A\) is constant (as a pointed map).
\end{prop}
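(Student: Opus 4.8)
The plan is to exhibit $\Sb^m \times \Sb^n$ as the cofiber of the \emph{Whitehead attaching map} and then read off the claim from the mapping-out universal property of this pushout. Concretely, write $\iota_m : \Sb^m \pto \Sb^m \lor \Sb^n$ and $\iota_n : \Sb^n \pto \Sb^m \lor \Sb^n$ for the two inclusions and let $w :\jeq [\iota_m, \iota_n] : \Sb^{m+n-1} \pto \Sb^m \lor \Sb^n$ be their Whitehead product. I would first record the pushout square
\[ \begin{tikzcd}
\Sb^{m+n-1} \ar[r, "w"] \ar[d] & \Sb^m \lor \Sb^n \ar[d] \\
1 \ar[r] & \Sb^m \times \Sb^n
\end{tikzcd} \]
exhibiting $\Sb^m \times \Sb^n$ as the cofiber of $w$, with the right-hand vertical map being the canonical inclusion of the wedge into the product. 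Here the hypothesis $m, n > 0$ guarantees that the spheres are connected, so that this standard cell decomposition applies.

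Given this, the proof is essentially formal. An $(\alpha, \beta)$-extension is, by \cref{defn:alpha-beta-extension}, precisely a pointed map $f : \Sb^m \times \Sb^n \pto A$ together with a pointed homotopy witnessing that $f$ restricts to $\alpha \lor \beta$ along the inclusion; that is, the type of $(\alpha, \beta)$-extensions is the fiber over $\alpha \lor \beta$ of the restriction map $(\Sb^m \times \Sb^n \pto A) \to (\Sb^m \lor \Sb^n \pto A)$. Applying the universal property of the pushout above, and using that $(1 \pto A)$ is contractible, identifies $(\Sb^m \times \Sb^n \pto A)$ with the type of pairs consisting of a map $g : \Sb^m \lor \Sb^n \pto A$ together with a pointed nullhomotopy of $g \circ w$. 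Taking the fiber over $g = \alpha \lor \beta$ leaves exactly the type of pointed nullhomotopies of $(\alpha \lor \beta) \circ w$. Finally, naturality of the Whitehead product gives $(\alpha \lor \beta) \circ w = [\alpha, \beta]$, so this is the type of witnesses that $[\alpha, \beta]$ is constant, as claimed.

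The content concentrated in the first paragraph is where the real work lies: justifying the cofiber presentation of $\Sb^m \times \Sb^n$ and pinning down its attaching map as $[\iota_m, \iota_n]$ for whatever definition of the Whitehead product we adopt (e.g.\ Brunerie's, cited above). Depending on that definition, the identity $(\alpha \lor \beta) \circ w = [\alpha, \beta]$ is either definitional or a short lemma asserting that Whitehead products commute with post-composition by a pointed map. Beyond this, the only remaining bookkeeping is the pointed-versus-unpointed matching of fillers and basepoint coherences; by \cref{rmk:unpointed-extensions} one may carry out the universal-property computation with unpointed data and track pointedness only at the end, which keeps the argument clean.
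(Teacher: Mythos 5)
Your proposal is correct and follows essentially the same route as the paper: the paper's proof uses the very same pushout square presenting \(\Sb^m \times \Sb^n\) as the cofiber of the Whitehead attaching map on \(\Sb^m \lor \Sb^n\), and reads off the equivalence from the pushout's universal property, with your identity \((\alpha \lor \beta) \circ w = [\alpha, \beta]\) built into the setup of its diagram (essentially definitional for Brunerie's Whitehead product). The pointed-versus-unpointed bookkeeping that you defer to \cref{rmk:unpointed-extensions} is handled in the paper by observing directly that the triangle over \(1\) has a unique pointed filler.
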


\begin{proof}
  Consider the diagram of pointed maps below, where the composite of the top two maps is \([\alpha, \beta]\) and the left diamond is a pushout of pointed types by~\cite[Proposition~3.2.2]{Brunerie2016}:
  \[ \begin{tikzcd}
    & \Sb^m \lor \Sb^n \ar[dr] \ar[drr, bend left, "\alpha \lor \beta"] \\
    \Sb^{m+n-1} \ar[dr] \ar[ur] && \Sb^m \times \Sb^n \rar["f", dashed] & A \period \\
    & 1 \ar[ur] \ar[urr, bend right]
  \end{tikzcd} \]
An \((\alpha, \beta)\)-extension is the same as a pointed map $f$ along with a
pointed homotopy filling the top-right triangle.
Since the bottom-right triangle is filled by a unique pointed homotopy,
an \((\alpha, \beta)\)-extension thus corresponds exactly to the data of a filler in the
outer diagram, i.e., a homotopy witnessing that \([\alpha, \beta]\) is constant as a pointed map.
\end{proof}

With the notation of the previous proposition, we have the following:

\begin{cor}[{\cite[Corollary~3.6]{Whitehead46}}] \label{cor:H-space-whitehead-vanishes}
  Suppose \(A\) is an H-space.
  Then \([\alpha, \beta]\) is constant.
\end{cor}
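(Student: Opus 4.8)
The plan is to combine the two immediately preceding results. The statement to prove is Corollary~\ref{cor:H-space-whitehead-vanishes}: if $A$ is an H-space and $\alpha : \Sb^m \pto A$, $\beta : \Sb^n \pto A$ are pointed maps (with $m,n>0$), then the Whitehead product $[\alpha,\beta] : \Sb^{m+n-1} \pto A$ is constant. The entire content is already packaged in the surrounding lemmas, so the corollary should follow by a short chaining of facts rather than any new geometric input.

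First I would invoke \cref{lem:extension-exists-H-space}: since $A$ is an H-space, there exists an $(\alpha,\beta)$-extension for the given pair of maps $\alpha,\beta$. This is the only place centrality or the H-space hypothesis actually enters. Concretely, the extension can be taken to be $(b,c) \mapsto \alpha(b) \cdot \beta(c)$, but for the proof I only need the existence of some element of the type of $(\alpha,\beta)$-extensions.

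Next I would apply \cref{prop:alpha-beta-extension-iff-constant}, which (for $m,n>0$) gives an equivalence between the type of $(\alpha,\beta)$-extensions and the type of witnesses that $[\alpha,\beta]$ is constant as a pointed map. Transporting the extension produced in the previous step across this equivalence yields precisely a witness that $[\alpha,\beta]$ is constant. Since we only need to exhibit one such witness (constancy is the mere existence of a nullhomotopy, and here we even get it as honest data), this completes the argument. Thus the proof is essentially the one-line composite: \cref{lem:extension-exists-H-space} produces an extension, and \cref{prop:alpha-beta-extension-iff-constant} converts it into the desired nullhomotopy.

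I do not anticipate a genuine obstacle, since both ingredients are already proven in the excerpt and the hypotheses ($m,n>0$, $A$ an H-space) match exactly what those results require. The only point deserving care is bookkeeping: \cref{prop:alpha-beta-extension-iff-constant} is phrased as an equivalence of types, so I should be explicit that I am applying its underlying forward map (or just the implication ``extension exists $\Rightarrow$ constant'') to the element supplied by \cref{lem:extension-exists-H-space}, and note that the hypothesis $m,n>0$ is inherited from the statement of the corollary. No new coherence or calculation is needed.
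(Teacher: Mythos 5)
Your proposal is correct and is exactly the paper's own proof: the paper's argument is the one-line combination of \cref{lem:extension-exists-H-space} and \cref{prop:alpha-beta-extension-iff-constant}, precisely as you describe. (One trivial slip: you mention ``centrality'' entering the argument, but no centrality hypothesis appears here---only the H-space structure is used.)
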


\begin{proof}
  This follows from \cref{lem:extension-exists-H-space,prop:alpha-beta-extension-iff-constant}.
\end{proof}

Using the above results, we can rule out H-space structures on even spheres in positive dimensions.

\begin{prop} \label{prop:hspace-S2}
  The \(n\)-sphere merely admits an H-space structure if and only if \([\iota_n, \iota_n] = 0\).
  In particular, there are no H-space structures on the $n$-sphere when $n>0$ is even.
\end{prop}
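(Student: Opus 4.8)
The plan is to derive the biconditional as a purely formal consequence of the two extension lemmas, and then obtain the even-dimensional statement by feeding in the non-triviality of the Whitehead square. For the biconditional, first observe that the identity map \(\id_{\Sb^n}\) is the generator \(\iota_n\), so \cref{lem:extension-H-space} identifies \(\HSpace{\Sb^n}\) with the type of \((\iota_n, \iota_n)\)-extensions. Since \(n > 0\), I can then apply \cref{prop:alpha-beta-extension-iff-constant} with \(m :\jeq n\) and \(\alpha :\jeq \beta :\jeq \iota_n\), which identifies the type of \((\iota_n, \iota_n)\)-extensions with the type of witnesses that \([\iota_n, \iota_n] : \Sb^{2n-1} \pto \Sb^n\) is constant, i.e.\ with the type of pointed null-homotopies of the Whitehead square. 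Composing, \(\HSpace{\Sb^n}\) is equivalent to the type of pointed null-homotopies of \([\iota_n, \iota_n]\).

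Passing to propositional truncations on both sides of this equivalence gives the biconditional: \(\Sb^n\) merely admits an H-space structure exactly when \([\iota_n, \iota_n]\) is merely pointedly null, which is precisely the assertion \([\iota_n, \iota_n] = 0\) in \(\pi_{2n-1}(\Sb^n)\). The forward direction also follows directly from \cref{cor:H-space-whitehead-vanishes}, but it is the full equivalence of types that I want to retain for the second claim.

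For the ``in particular'', it suffices to show \([\iota_n, \iota_n] \neq 0\) whenever \(n > 0\) is even: the equivalence above then forces the type of null-homotopies of \([\iota_n, \iota_n]\), and hence \(\HSpace{\Sb^n}\) itself, to be empty, so there are no H-space structures at all (not merely none up to truncation). To establish the non-vanishing I would use the integer-valued Hopf invariant \(H\) on \(\pi_{2n-1}(\Sb^n)\), available in even dimensions, together with Brunerie's computation of Whitehead products showing \(H([\iota_n, \iota_n]) = \pm 2\); as \(2 \neq 0\), the class \([\iota_n, \iota_n]\) is non-trivial. This last step is the main obstacle: the biconditional is routine bookkeeping once the extension lemmas are granted, whereas the non-vanishing of the Whitehead square is the genuine topological input, and is exactly where Brunerie's computation enters.
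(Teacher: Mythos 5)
Your proposal is correct and takes essentially the same route as the paper: both identify H-space structures on $\Sb^n$ with $(\iota_n,\iota_n)$-extensions via \cref{lem:extension-H-space}, convert these to (pointed) null-homotopies of $[\iota_n,\iota_n]$ via \cref{prop:alpha-beta-extension-iff-constant}, truncate to get the biconditional, and then invoke Brunerie's computation of the Whitehead square (via the Hopf invariant) to rule out even spheres. The only cosmetic difference is that you package the argument as a single untruncated equivalence before truncating, whereas the paper argues the two implications separately (using \cref{cor:H-space-whitehead-vanishes} for the forward direction).
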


\begin{proof}
  The implication \((\rightarrow)\) is immediate by \cref{cor:H-space-whitehead-vanishes}.
  Conversely, \cref{prop:alpha-beta-extension-iff-constant} implies that \([\iota_n, \iota_n] = 0 \) if and only if an \( (\id_{\Sb^n}, \id_{\Sb^n})\)-extension merely exists, which by \cref{prop:extension-H-space} happens if and only if \(\Sb^n\) merely admits an H-space structure.

  Finally, using that \([\iota_n, \iota_n] = 2\) in \(\pi_{2n-1}(\Sb^n)\) for even \(n > 0\)~\cite[Proposition~5.4.4]{Brunerie2016}, the above implies that \(\Sb^n\) cannot admit an H-space structure.
\end{proof}

We also record the following result and a corollary.

\begin{prop} \label{prop:H-space-freudenthal-unit-section}
  Let \(A\) be a left-invertible H-space.
  The unit \(\eta : A \to_* \Omega \Sigma A\) has a pointed retrac\-tion, given by the connecting map \( \delta : \Omega \Sigma A \to_* A \) associated to the Hopf fibration of \(A\).
\end{prop}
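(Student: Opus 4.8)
The plan is to recall explicit descriptions of both maps and then compute the composite $\delta \circ \eta$ directly. Write $N, S : \Sigma A$ for the two points of the suspension and $\operatorname{merid}(a) : N = S$ for the meridian through $a : A$, with $N$ as basepoint. The Hopf fibration of $A$ is the family $H : \Sigma A \to \Type$ defined by suspension recursion with $H(N) :\jeq A$, $H(S) :\jeq A$, and $\ap_H(\operatorname{merid}(a))$ the path obtained by univalence from the left-multiplication equivalence $\mu(a,-) : A \simeq A$; here left-invertibility of $A$ is exactly what guarantees that $\mu(a,-)$ is an equivalence, so that this family is well-defined. Consequently transport satisfies $\operatorname{tr}^H(\operatorname{merid}(a), x) = \mu(a, x)$. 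The connecting map $\delta : \Omega \Sigma A \pto A$ is transport of the fibre basepoint, $\delta(\ell) = \operatorname{tr}^H(\ell, \pt)$, which is pointed since $\operatorname{tr}^H(\refl, \pt) = \pt$; and the unit $\eta : A \pto \Omega \Sigma A$ sends $a$ to $\operatorname{merid}(a) \ct \operatorname{merid}(\pt)^{-1}$.

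First I would compute the composite pointwise, ignoring pointedness. For $a : A$, functoriality of transport along concatenation and inversion gives
\[ \delta(\eta(a)) = \operatorname{tr}^H(\operatorname{merid}(a) \ct \operatorname{merid}(\pt)^{-1}, \pt) = \mu(\pt,-)^{-1}\bigl(\mu(a, \pt)\bigr). \]
The right unit law $\mu_r$ identifies $\mu(a,\pt)$ with $a$, while the left unit law $\mu_l : \mu(\pt,-) = \id_A$ identifies the inverse equivalence $\mu(\pt,-)^{-1}$ with $\id_A$. Composing these two identifications produces an (unpointed) homotopy $\delta \circ \eta = \id_A$.

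Finally I would upgrade this to a pointed homotopy. Since $A$ is a left-invertible H-space it is in particular an H-space, so \cref{lem:unpointed-to-pointed-homotopy}(3) applies to the pointed maps $\delta \circ \eta$ and $\id_A$ and converts the unpointed homotopy just constructed into a pointed one. This exhibits $\delta$ as a pointed retraction of $\eta$, as required.

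The steps are routine, and the only delicate points are bookkeeping. One must get the variance conventions right so that the connecting map is $\operatorname{tr}^H(\ell, \pt)$ rather than $\operatorname{tr}^H(\ell^{-1}, \pt)$: the latter would compute to the division $a \linv \pt$ instead of $a$, giving an inverse map rather than the identity. One must also compose the two unit laws in the correct order in the pointwise calculation. The potential coherence headache—verifying that the homotopy $\delta(\eta(a)) = a$ agrees at $a = \pt$ with the composite of the basepoint paths of $\delta$, $\eta$, and $\id_A$—is the place I expect to be most error-prone if done by hand, and it is precisely what the appeal to \cref{lem:unpointed-to-pointed-homotopy} lets me avoid entirely.
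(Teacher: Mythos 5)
Your proof is correct and follows essentially the same route as the paper's: both compute $\delta(\eta(a)) = \merid(\pt)_*^{-1}(\merid(a)_*(\pt)) = \pt \linv (a \cdot \pt) = a$ using the transport description of the Hopf fibration, and both then promote the unpointed homotopy to a pointed one via \cref{lem:unpointed-to-pointed-homotopy}. Your extra care about the definition of the fibration via univalence and the variance of the connecting map is just a more explicit spelling-out of the same argument.
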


\begin{proof}
  Let \(\delta : \Omega \Sigma A \to_* A \) be the connecting map associated to the Hopf fibration of \(A\).
  Recall that for a loop \( p : N = N \), we have \(\delta(p) :\jeq p_*(\pt)\) where \( p_* : A \to A \) denotes transport and $A$ is the fibre above $N$.
  By definition of the Hopf fibration, a path \( \merid(a) : N =_{\Sigma A} S \) sends an element \(x\) of the fibre \(A\) to \( a \cdot x \).
  Now define a homotopy \(\delta \circ \eta = \id\) by
  \[ \delta(\eta(a)) \jeq \delta(\merid(a) \ct \merid(\pt)\inv)
  = \merid(\pt)_*\inv (\merid(a)_*(\pt))
  \jeq \pt \linv (a \cdot \pt)
  = a . \]
Finally, we promote this to a pointed homotopy using \cref{lem:unpointed-to-pointed-homotopy}.
\end{proof}

It follows that we can strengthen the conclusion of the Freudenthal suspension theorem in this situation.

\begin{cor}\label{cor:H-space-freudenthal}
  Let $n \geq 0$ and let $A$ be an $n$-connected H-space.
  Then the Freudenthal map $\eta : A \to \Omega \Sigma A$ is a $(2n+1)$-equivalence.
\end{cor}

\begin{proof}
  By Freudenthal, $\eta$ is $2n$-connected, so it induces
  an equivalence on $\pi_k$ for $k \leq 2n$ %
  and a surjection on $\pi_{2n+1}$. %
  Since $A$ is $0$-connected, it is left-invertible, so \cref{prop:H-space-freudenthal-unit-section} implies that
  the induced map on $\pi_{2n+1}$ has a retraction.
  Therefore it is a monomorphism, and hence an equivalence, as required.
\end{proof}

Since $\Sb^1$ and $\Sb^3$ are H-spaces (by~\cite[Lemma 8.5.8]{HoTTBook} and~\cite{BuchholtzRijke2018}, respectively),
we have:

\begin{cor} \label{cor:stems-stabilizes-early}
  The natural maps \( \pi_1(\Sb^1) \to \pi_2(\Sb^2) \) and \( \pi_5(\Sb^3) \to \pi_6(\Sb^4) \) are isomorphisms. \qed
\end{cor}

The fact that the unit \(\eta : A \to_* \Omega \Sigma A\) has a retraction when $A$ is a
left-invertible H-space also follows from James' reduced product construction, as shown
in~\cite{James55}.
Using~\cite{Brunerie2016}, one can see that this goes through in homotopy type theory.
However, the above argument is much more elementary.
We don't know if this argument had been observed before.

We also point out that when $n = 0$, \cref{cor:H-space-freudenthal} specializes to \cite[Theorem~4.3]{LF},
giving a new, simpler proof of that result.

\subsection{Evaluation fibrations}\label{ss:evaluation-fibrations}
We now begin our study of \emph{evaluation fibrations} and their relation to H-space structures.

\begin{defn}
  Let \( A \) be a type and \( a : \Trunc{A}_0 \).
  The \textbf{path component of \boldmath{\(a\)} in \boldmath{\(A\)}} is
  \[ \pcomp{A}{a} :\jeq \sum_{a' : A} \; (\trunc{a'}_0 = a). \]
  If \( a : A \) then we abuse notation and write \( \pcomp{A}{a} \) for \( \pcomp{A}{\trunc{a}_0} \), and in this case \( \pcomp{A}{a} \) is pointed at \( (a, \refl) \).
\end{defn}

\begin{defn} \label{defn:evaluation-fibrations}
  For any pointed map \( \alpha : B \pto A \), the \textbf{evaluation fibration (at \boldmath{\(\alpha\)})} is the pointed map \( \ev_\alpha : \pcomp{(B \to A)}{\alpha} \to_* A \) induced by evaluating at the base point of \(B\).
\end{defn}

Note that the component $\pcomp{(B \to A)}{\alpha}$ consists of maps that are merely equal to $\alpha$ as unpointed maps.
Also observe that the component \(\pcomp{(A \to A)}{\id}\) is equivalent to \(\pcomp{(A \simeq A)}{\id}\), since being an equivalence is a property of a map.
We permit ourselves to pass freely between the two.

Since pointed maps out of connected types land in the component of the base point of the codomain, we have the following consequence of \cref{prop:H-space-structures}.

\begin{cor} \label{cor:connected-hspace-pointed-sections}
  Let \(A\) be a pointed, connected type.
  The type of H-space structures on \(A\) is equivalent to the type of pointed sections of \(\evid : \pcomp{(A \simeq A)}{\id} \to_* A\). \qed
\end{cor}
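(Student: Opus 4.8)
The plan is to obtain \cref{cor:connected-hspace-pointed-sections} as a direct specialization of \cref{cor:hspace-pointed-section}, where the only real content is identifying the domain of the evaluation map $\ev : (A \to A, \id) \pto A$ with the component $\pcomp{(A \simeq A)}{\id}$ in a way that is compatible with evaluation at the base point. So first I would recall from \cref{cor:hspace-pointed-section} that the type of H-space structures on $A$ is equivalent to the type of pointed sections of $\ev : (A \to A, \id) \pto A$. The goal is then to replace the full mapping type $A \to A$ by the component $\pcomp{(A \simeq A)}{\id}$ without changing the type of pointed sections.

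The key observation is that a pointed section $s : A \pto (A \to A, \id)$ is a map out of $A$, which is connected by hypothesis, landing in the type $(A \to A, \id)$ based at $\id$. Since $A$ is connected, its image under any pointed map lands entirely in the path component of the base point of the codomain; that is, any pointed map $A \pto (A \to A, \id)$ factors uniquely through the inclusion $\pcomp{(A \to A)}{\id} \hookrightarrow (A \to A)$. Formally I would invoke the fact (already used in the sentence preceding \cref{cor:connected-hspace-pointed-sections}) that pointed maps out of connected types land in the component of the base point of the codomain. This gives an equivalence between pointed sections of $\ev : (A \to A, \id) \pto A$ and pointed sections of the restricted map $\pcomp{(A \to A)}{\id} \pto A$, since the evaluation map restricted to the identity component is exactly $\evid$ and the factorization is compatible with post-composition by $\ev$.

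Finally I would use the remark immediately above the corollary statement that $\pcomp{(A \to A)}{\id}$ is equivalent to $\pcomp{(A \simeq A)}{\id}$, because being an equivalence is a proposition and $\id$ is an equivalence; under this equivalence the evaluation map on the left corresponds precisely to $\evid$ on the right. Stringing these equivalences together yields the desired equivalence between $\HSpace{A}$ and the type of pointed sections of $\evid$.

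I expect the only genuine subtlety to be making precise that restricting the codomain of the section to the identity component does not change the type of sections, rather than merely the mere existence of a section. This is where connectedness of $A$ is essential: it guarantees that the factorization through the component is not just possible but unique up to a contractible space of choices, so that the type of sections is genuinely preserved and not just its propositional truncation. Since all the ingredients---\cref{cor:hspace-pointed-section}, the connectedness argument, and the passage between $\pcomp{(A \to A)}{\id}$ and $\pcomp{(A \simeq A)}{\id}$---are already in place, the proof is essentially a bookkeeping assembly, which is presumably why the statement is marked \qed with no written proof.
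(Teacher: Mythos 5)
Your proposal is correct and follows exactly the paper's route: specialize \cref{cor:hspace-pointed-section}, use connectedness of \(A\) to see that pointed sections factor (with contractible ambiguity) through \(\pcomp{(A \to A)}{\id}\), and identify that component with \(\pcomp{(A \simeq A)}{\id}\) since being an equivalence is a proposition. The paper compresses all of this into the single sentence preceding the corollary, which is why it carries a \qed rather than a written proof.
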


For left-invertible H-spaces, various evaluation fibrations become trivial:

\begin{prop} \label{prop:ev-splits}
  Suppose \( A \) is a left-invertible H-space.
  We have a pointed equivalence over \(A\)
  \[ \begin{tikzcd}
      (A \to A) \ar[dr, "\ev" swap] \ar[rr, "\sim"] && (A \to_* A) \times A \ar[dl, "\pr_2"]   \\
      & A \comma
    \end{tikzcd} \]
  where the mapping spaces are both pointed at their identity maps.
  This pointed equivalence restricts to pointed equivalences \( (A \simeq A) \simeq_* (A \simeq_* A) \times A \) over \(A\), and \( \pcomp{(A \to A)}{\id} \simeq_* \pcomp{(A \to_* A)}{\id} \times \pcomp{A}{\pt} \) over \(\pcomp{A}{\pt}\).
\end{prop}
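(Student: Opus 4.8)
The plan is to trivialise the evaluation fibration by left translation. I would define the comparison map in the direction
\[ \Phi : (A \pto A) \times A \to (A \to A), \qquad \Phi(g,a) :\jeq \mu(a,-) \circ g, \]
so that $\Phi(g,a)(x) = a \cdot g(x)$; the asserted equivalence is then $\Phi^{-1}$. The key observation is that for each $a : A$, left multiplication $\mu(a,-) : A \to A$ is an equivalence (this is precisely left-invertibility) which sends $\pt$ to $a$ via $\mu_r(a)$, and hence is a pointed equivalence $(A,\pt) \simeq_* (A,a)$.

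First I would record that $\Phi$ is a map over $A$: evaluating at $\pt$ gives $\ev(\Phi(g,a)) = a \cdot g(\pt) = a \cdot \pt = a$, using that $g$ is pointed and then $\mu_r(a)$, so that $\ev \circ \Phi \sim \pr_2$. To see that $\Phi$ is an equivalence I would use the fibrewise characterisation of equivalences~\cite[Thm.~4.7.7]{HoTTBook}: under the identifications $(A \pto A) \times A \simeq \Sigma_{a:A}(A \pto A)$ and $(A \to A) \simeq \Sigma_{a:A}\,\mathrm{fib}_{\ev}(a)$, the fibre of $\pr_2$ over $a$ is $A \pto A$, the fibre of $\ev$ over $a$ is the type $(A,\pt) \pto (A,a)$ of pointed maps, and the map induced by $\Phi$ is post-composition with the pointed equivalence $\mu(a,-)$. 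Post-composition with an equivalence is an equivalence, so $\Phi$ is a fibrewise equivalence and therefore an equivalence.

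Next I would check pointedness. At the basepoint $(\id,\pt)$ we have $\Phi(\id,\pt) = \mu(\pt,-)$, which is identified with $\id_A$ by the left unit law $\mu_l$; thus $\Phi$ preserves basepoints and is a pointed equivalence over $A$, with explicit inverse $f \mapsto (f(\pt) \linv f(-),\, f(\pt))$. For the two restrictions: a map $f = \mu(a,-)\circ g$ is an equivalence if and only if $g$ is, since $\mu(a,-)$ is an equivalence and composing with an equivalence both preserves and reflects being an equivalence; this yields $(A \simeq A) \simeq_* (A \simeq_* A) \times A$ over $A$. Finally, being a pointed equivalence, $\Phi$ carries identity components onto identity components, and since the identity component of a product is the product of the identity components we obtain $\pcomp{(A \to A)}{\id} \simeq_* \pcomp{(A \pto A)}{\id} \times \pcomp{A}{\pt}$; this restriction lies over $\pcomp{A}{\pt}$ because $\ev$ and $\pr_2$ send these components into the component of $\pt$.

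The main obstacle is not conceptual but bookkeeping: rewriting $\Phi$ as the total map of a fibrewise transformation (transporting along the homotopy $\ev\circ\Phi \sim \pr_2$) and checking that the induced map on fibres really is post-composition with $\mu(a,-)$ carrying the correct pointing path $\mu_r(a)$. Once left translation is seen to trivialise $\ev$, the equivalence and all three of its forms are essentially forced.
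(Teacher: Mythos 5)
Your proposal is correct and uses the same trivialization as the paper: your $\Phi(g,a) :\jeq (x \mapsto a \cdot g(x))$ is literally the map the paper exhibits as the inverse of its equivalence $e(f) :\jeq (a \mapsto f(\pt) \linv f(a),\, f(\pt))$, which in turn is your claimed inverse, so the construction is identical with the roles of map and inverse swapped. The only real difference is how the equivalence property is verified: the paper just checks that the two explicit maps are mutually inverse, whereas you invoke the fibrewise criterion of \cite[Thm.~4.7.7]{HoTTBook}, identifying the fibre of $\ev$ over $a$ with the pointed mapping type $(A,\pt) \to_* (A,a)$ and the induced map on fibres with post-composition by the pointed equivalence $\mu(a,-) : (A,\pt) \to_* (A,a)$ (pointed by $\mu_r(a)$). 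This variant is a bit more conceptual---it explains \emph{why} left translation trivializes $\ev$, namely because the fibres of $\ev$ are pointed mapping types---at the cost of the bookkeeping you acknowledge in converting $\Phi$ into a fibrewise transformation; your treatment of the restrictions to equivalences and to identity components matches the paper's appeal to functoriality.
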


\begin{proof}
  Define \( e : (A \to A) \to (A \to_* A) \times A \) by
  \( e(f) :\jeq (a \mapsto f(\pt) \linv f(a), f(\pt)) \) where the first component is a pointed map in the obvious way.
  Clearly \(e\) is a map over \(A\), and moreover \(e\) is pointed.
  It is straightforward to check that the triangle above is filled by a pointed homotopy.
  (One could also apply \cref{lem:unpointed-to-pointed-homotopy}, but a direct inspection suffices in this case.)

  Finally, it's straightforward to check that \(e\) has an inverse given by
  \[ (g, a) \mapsto (x \mapsto a \cdot g(x)) . \]
  Hence \(e\) is an equivalence, as desired.
  The restrictions to equivalences and path components follow by functoriality.
\end{proof}

The hypotheses of the proposition are satisfied, for example, by connected H-spaces.

\begin{ex}
  We obtain three pointed equivalences for any abelian group \(A\) and \(n \geq 1\):
  \begin{align*}
  \big( \EM{A}{n} \to \EM{A}{n} \big)   &\simeq_* \ab(A,A) \times \EM{A}{n} , \\
  \big( \EM{A}{n} \simeq \EM{A}{n}\big) &\simeq_* \Aut_{\ab}(A) \times \EM{A}{n} , \text{ and} \\
  \pcomp{\big( \EM{A}{n} \to \EM{A}{n} \big)}{\id} &\simeq_* \EM{A}{n} .
  \end{align*}
\end{ex}

\begin{ex}
  Taking \( A :\jeq \Sb^3 \) in the previous proposition, by virtue of the H-space structure on the \( 3 \)-sphere constructed in~\cite{BuchholtzRijke2018}, we get three pointed equivalences:
  \[
    (\Sb^3 \to \Sb^3) \simeq_* \Omega^3 \Sb^3 \times \Sb^3 , \quad
    (\Sb^3 \simeq \Sb^3) \simeq_* \Omega^3_{\pm 1} \Sb^3 \times \Sb^3 , \quad \text{and} \quad
    \pcomp{(\Sb^3 \simeq \Sb^3)}{\id} \simeq_\ast \pcomp{(\Sb^3 \simeq_* \Sb^3)}{\id} \times \Sb^3 ,
  \]
  where \( \Omega^3_{\pm 1} \Sb^3 :\jeq \pcomp{(\Omega^3 \Sb^3)}{1} \sqcup \pcomp{(\Omega^3 \Sb^3)}{-1} \)
  and $1$ and $-1$ refer to the corresponding elements of $\pi_3(\Sb^3) = \Zb$.
  The self-equivalence types \(\Sb^n \simeq \Sb^n\) and \(\Sb^n \simeq_* \Sb^n\)
  are also studied
  in~\cite{CBKB}, where it is shown that these have two components, each with
  fundamental group \(\Zb/2\Zb\), for \(n\ge2\).
\end{ex}

By combining our results thus far, we obtain the following equivalence which generalizes a classical formula of~\cite[Theorem~5.5A]{Copeland59}, independently shown by~\cite{AC63}, for counting \emph{homotopy classes} of H-space structures on certain spaces.

\begin{thm}\label{thm:space-of-H-space-structures}
Let \( A \) be a left-invertible H-space.
The type $\HSpace{A}$ of H-space structures on \( A \) is equivalent to \( A \land A \to_* A \).
\end{thm}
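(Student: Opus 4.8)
The plan is to chain together the equivalences already established in this section, reducing the statement to a smashing-out computation. By Corollary~\ref{cor:connected-hspace-pointed-sections}—or more directly by combining Corollary~\ref{cor:hspace-pointed-section} with Proposition~\ref{prop:equiv-extensions-pointed-lifts}—the type $\HSpace{A}$ is equivalent to the type of pointed sections of the evaluation fibration $\ev : (A \to A, \id) \pto A$. The crucial leverage comes from Proposition~\ref{prop:ev-splits}: since $A$ is left-invertible, we have a pointed equivalence over $A$ identifying $(A \to A)$ with $(A \to_* A) \times A$, where the map to $A$ becomes $\pr_2$. Under this identification, a pointed section of $\ev$ becomes a pointed section of $\pr_2 : (A \to_* A) \times A \pto A$.

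**Next I would** analyze pointed sections of $\pr_2$. A section of $\pr_2$ is, up to the data involved, a map $A \to (A \to_* A) \times A$ whose second component is the identity; such a thing is determined by its first component, a map $s : A \to (A \to_* A)$, i.e.\ an element of $A \to (A \to_* A)$. Tracking the pointedness and the "over $A$" conditions, I expect the type of pointed sections of $\pr_2$ to reduce to pointed maps $A \pto (A \to_* A)$, where $(A \to_* A)$ is pointed at the identity. So the goal becomes to produce an equivalence
\[
  \big(A \to_* (A \to_* A)\big) \;\simeq\; \big(A \land A \to_* A\big),
\]
with the left-hand mapping type pointed at $\id_A$ rather than the constant map. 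The smash–hom adjunction in its pointed form gives $\big(A \land A \to_* A\big) \simeq \big(A \to_* (A \to_* A)\big)$, but with $(A \to_* A)$ pointed at the \emph{constant} map, so the remaining task is to reconcile these two basepoints.

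**The hard part will be** exactly this basepoint mismatch. The smash adjunction naturally uses the constant map as the basepoint of the inner mapping space, whereas the section/splitting analysis lands at the identity map. To bridge the gap I would invoke left-invertibility again, via Proposition~\ref{prop:pmap-hspace} and the remark following it: when $A$ is left-invertible, $A \to_* A$ is itself a left-invertible H-space, and pointwise multiplication by $\id_A$ on the left gives a self-equivalence of $A \to_* A$ sending the constant map to $\id_A$. Post-composing (or conjugating the adjunction) with this self-equivalence transports the basepoint from the constant map to $\id_A$, yielding a pointed equivalence between the two versions of $A \to_* (A \to_* A)$. Careful bookkeeping of which homotopies are pointed—rather than any genuinely deep obstruction—is where the subtlety lies.

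**Assembling the chain**, I would compose: $\HSpace{A} \simeq (\text{pointed sections of } \ev) \simeq (\text{pointed sections of } \pr_2) \simeq (A \to_* (A \to_* A))$ at basepoint $\id_A$, which via the basepoint-corrected smash adjunction is equivalent to $A \land A \to_* A$. Each link is an equivalence already available in the excerpt or a standard adjunction, so modulo the pointedness-tracking in the final two steps the argument is a clean concatenation rather than a fresh construction.
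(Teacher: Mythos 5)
Your proposal is correct and follows essentially the same route as the paper's proof: pointed sections of the evaluation fibration via \cref{cor:hspace-pointed-section}, the splitting of \cref{prop:ev-splits}, the basepoint change from the identity to the constant map via \cref{prop:pmap-hspace}, and finally the smash--hom adjunction. One small correction: you should cite \cref{cor:hspace-pointed-section} rather than \cref{cor:connected-hspace-pointed-sections}, since a left-invertible H-space need not be connected; with that fixed, your argument, including the handling of the basepoint mismatch, matches the paper's proof step for step.
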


\begin{proof}
  By \cref{prop:H-space-structures}, the type of H-space structures on \( A \) is equivalent to the type of pointed sections of \( \ev : (A \to A) \to A \).
  By \cref{prop:ev-splits}, this type is equivalent to the type of pointed sections of \( \on{pr}_2 : (A \to_* A) \times A \to A \), which are simply pointed maps \( A \to_* (A \to_* A, \id) \), where the codomain is pointed at the identity.
  The latter type is equivalent to \( A \to_* (A \to_* A) \), where the codomain is pointed at the constant map, by \cref{prop:pmap-hspace}.
  Finally, this type is equivalent to \( A \land A \to_* A \), by the smash--hom adjunction for pointed types~\cite[Theorem~4.3.28]{vanDoorn2018}.
\end{proof}

\begin{ex}
  It follows from the theorem that \( \HSpace{\Sb^1} \) is contractible and \( \HSpace{\Sb^3} \simeq \Omega^6 \Sb^3 \).
\end{ex}

\subsection{Unique H-space structures}\label{ss:unique-H-space-structures}
We collect results about H-space structures which are unique, in the sense that the type of H-space structures is contractible.
In particular, we give elementary proofs that such H-space structures are coherently abelian and associative.
Moreover, pointed self-maps of such types are always H-space maps.

There are many examples of types with a unique H-space structure.
We will see in \cref{prop:hspace-pointed-section-comp} that the ``central types''
we study in the next section have unique H-space structures.
The following proposition is another source of examples, which will
be used in \cref{ex:unique-H-space-not-central} and \cref{ss:stable-n-groups}.

\begin{prop}\label{prop:stable-types}
  Let $k \geq 0$ and let $A$ be an $(k-1)$-connected, pointed type.
  If $A$ is $(2k-1)$-truncated, then $A$ has at most one H-space structure.
  If $A$ is $(2k-2)$-truncated, then $A$ has a unique H-space structure.
\end{prop}

This also appears in~\cite{Warn23}, with a different argument.

\begin{proof}
  Assume $A$ is $(k-1)$-connected and $(2k-1)$-truncated.
  By~\cite[Corollary~2.32]{CS20}, $A \land A$ is $(2k-1)$-connected,
  and therefore $A \land A \pto A$ is contractible.
  Therefore, if $A$ has an H-space structure, then $\HSpace{A}$ is contractible,
  by \cref{thm:space-of-H-space-structures}.

  If we make the stronger assumption that $A$ is $(2k-2)$-truncated,
  then by~\cite[Theorem~6.7]{BvDR}, $A$ is an infinite loop space, so it
  is in particular an H-space.
\end{proof}

Now we show that unique H-space structures are particularly well-behaved.

\begin{lem}\label{lem:unique-abelian}
  Let \(A\) be a pointed type and suppose \(\HSpace{A}\) is contractible.
  Then the unique H-space structure \(\mu\) on \(A\) is coherently abelian.
\end{lem}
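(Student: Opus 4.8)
The plan is to exploit the twist operation directly, together with the hypothesis that \(\HSpace{A}\) is contractible. Recall from \cref{defn:abelian} that \(\mu\) is coherently abelian exactly when \(\mu = \mu^T\) in \(\HSpace{A}\), where \(\mu^T\) is the twist introduced in \cref{defn:H-space}. So it suffices to produce such an identification, and the point is that contractibility hands this to us for free once we know \(\mu^T\) is an element of the same type.

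First I would observe that the twist \(\mu^T\) of a \emph{coherent} H-space structure is again a coherent H-space structure on \(A\): its underlying operation is \((a_0, a_1) \mapsto \mu(a_1, a_0)\), its left and right unit laws are \(\mu_r\) and \(\mu_l\) respectively, and the required coherence \(\mu^T_l(\pt) = \mu^T_r(\pt)\) is supplied by (the inverse of) the coherence \(\mu_{lr}\) of \(\mu\). Thus \(\mu^T\) is genuinely an element of \(\HSpace{A}\), not merely of the type of non-coherent structures.

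The conclusion is then immediate: since \(\HSpace{A}\) is contractible, it is in particular a mere proposition, so any two of its elements are equal. Applying this to \(\mu\) and \(\mu^T\) yields a path \(\mu = \mu^T\) in \(\HSpace{A}\), which by \cref{defn:abelian} is exactly a witness that \(\mu\) is coherently abelian.

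There is no real obstacle in this argument; the only thing that must be checked carefully is that twisting stays within the coherent H-space structures rather than dropping us into the non-coherent ones, which is precisely the content recorded by the twist clause of \cref{defn:H-space}. The strength of the conclusion—\emph{coherent} abelianness rather than merely the existence of a symmetry homotopy as in \cref{defn:abelian}(1)—comes entirely from contractibility providing an identification in the full type \(\HSpace{A}\), and not just a pointwise commutativity datum.
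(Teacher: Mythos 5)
Your proposal is correct and follows essentially the same route as the paper: the paper's proof likewise notes that contractibility of \(\HSpace{A}\) yields an identification \(\mu = \mu^T\), which by \cref{defn:abelian} is exactly coherent abelianness. Your additional verification that the twist of a coherent structure is again coherent (with unit laws swapped and the coherence inverted) is a detail the paper absorbs into the twist clause of \cref{defn:H-space}, so it is a welcome but not divergent elaboration.
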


\begin{proof}
  Since \(\HSpace{A}\) is contractible, there is an identification \(\mu = \mu^T\) of H-space structures.
  (Here, $\mu^T$ is the twist, defined in \cref{defn:H-space}.)
\end{proof}

\begin{rmk}\label{rmk:symmetric-h-spaces}
  In fact, a unique H-space structure is coherently abelian in a stronger sense,
  which we now explain.
  The type of H-space structures on $A$ can be given a $\Zb/2$-action, following the general procedure of equipping a type $X$ with a $G$-action by constructing a type family $Y:BG\to\Type$ along with an equivalence $Y(\pt) \simeq X$. To construct the $\Zb/2$-action on $\HSpace{A}$ we first need to construct a $\Zb/2$-action on the identity type.
Recall that $B\Zb/2$ can be described as $\sum_{X : \Type} \, \Trunc{X = \mathbf{2}}$,
where $\mathbf{2}$ is the two-element type.
We define the \define{symmetric identity type}
  \begin{equation*}
    \symId_A:\prod_{X:B\Zb/2}A^X\to \Type
  \end{equation*}
  by $\symId_A(X,f):=\sum_{a:A} \prod_{x:X}f(x)=a$. One can easily check that $\symId_A(\mathbf{2},f)\simeq (f(0)=f(1))$, so that the symmetric identity type indeed defines a $\Zb/2$-action on the ordinary identity type.

  Now we define for any $2$-element type $X:B\Zb/2$ the type
  \begin{equation*}
    \ZHSpace{X}{A}:=\sum_{\mu:A^X\to A} \; \sum_{H:\on{unital}(\mu)} \symId(X,x\mapsto H(\const_{\pt},x,\refl)) ,
  \end{equation*}
  where $\on{unital}(\mu) :\jeq \prod_{f:X\to A}\prod_{x:X}\prod_{p:f(x)=\pt}\,\mu(f)=f(\sigma(x))$
  asserts that $\mu$ satisfies unit laws in both variables
  and $\sigma : X \to X$ is the transposition.
  It is straightforward to check that $\ZHSpace{\mathbf{2}}{A}\simeq\HSpace{A}$. The type of \define{symmetric H-space structures} on $A$ is defined to be
  \begin{equation*}
    \prod_{X:B\Zb/2}\ZHSpace{X}{A},
  \end{equation*}
  i.e., the type of fixed points of the $\Zb/2$-action on $\HSpace{A}$. If $\HSpace{A}$ is contractible, then each $\ZHSpace{X}{A}$ is contractible, and so the type of symmetric H-space structures on $A$ is also contractible.
  Since $\sigma : \mathbf{2} \to \mathbf{2}$ transports an H-space structure $\mu$ to $\mu^T$,
  it follows that a symmetric H-space structure is coherently abelian, as in \cref{lem:unique-abelian}.
  Furthermore, by applying the first projection, we obtain an operation
  \begin{equation*}
    \prod_{X:B\Zb/2} A^X\to A .
  \end{equation*}
  Put another way, we obtain an operation
  \[
    \big( \sum_{X:B\Zb/2} A^X \big) \to A
  \]
  defined on the type of \define{unordered pairs}.
  In other words, unique H-spaces are abelian in a fully coherent way.
\end{rmk}

For the next result, we need to make precise which presentation of smash products we are using.
We use the definition
from~\cite[Definition~4.3.6]{vanDoorn2018}
(see also~\cite[Definition~2.29]{CS20})
which avoids higher paths.
For pointed types $(X,x_0)$ and $(Y,y_0)$, the \define{smash product} $X \land Y$
is the higher inductive type with point constructors
$\smin : X \times Y \to X \land Y$ and
$\auxl,\, \auxr : X \land Y$,
and path constructors
$\gluel : \prod_{y : Y} \smin(x_0, y) = \auxl$
and $\gluer : \prod_{x : X} \smin(x, y_0) = \auxr$.
It is pointed by $\auxl$.
The smash product was shown to be associative in~\cite[Definition~4.3.33]{vanDoorn2018}.

\begin{prop} \label{prop:unique-pmap}
  Suppose \(A\) is a pointed type with a unique H-space structure which is left-invertible.
  Then any pointed map \(f : A \to_* A\) is an H-space map, i.e., we have
  \( f(a \cdot b) = f(a) \cdot f(b) \) for all $a, b : A$.
\end{prop}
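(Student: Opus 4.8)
The plan is to realize the desired homotopy \(f(a\cdot b) = f(a)\cdot f(b)\) as an identification between two points in a type that we can show is contractible, namely a type of lifts of \(f\) through an evaluation fibration. I work with the map \(\ev \colon (A \to A, f) \to_* A\), where crucially the domain of unpointed self-maps is pointed at \(f\) itself rather than at \(\id\). Consider the two maps \(F_1, F_2 \colon A \to (A \to A)\) given by \(F_1(a) :\jeq (b \mapsto f(a\cdot b))\) and \(F_2(a) :\jeq (b \mapsto f(a)\cdot f(b))\). Each \(F_i\) sends \(\pt\) to \(f\) (using the unit laws together with \(f(\pt) = \pt\)) and satisfies \(\ev(F_i(a)) = f(a)\) (using the right unit law), so, after supplying the evident coherences, both are pointed lifts of \(f\) through \(\ev\). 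An identification \(F_1 = F_2\) of such lifts yields in particular \(F_1(a)(b) = F_2(a)(b)\) for all \(a, b\), which is exactly the H-space map equation. Thus it suffices to prove that the type of pointed lifts of \(f\) through \(\ev\) is contractible.

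To identify this lift type, I first apply \cref{prop:ev-splits}: since \(A\) is left-invertible, the splitting is an equivalence over \(A\) carrying \(\ev\) to \(\pr_2\), and since \(f\) is pointed it carries the basepoint \(f\) to \((f, \pt)\). Hence the type of pointed lifts of \(f\) through \(\ev\) is equivalent to the type of pointed lifts of \(f\) through \(\pr_2 \colon (A \to_* A)\times A \to_* A\), with the codomain repointed at \((f,\pt)\). In such a lift the second coordinate together with the filler for \(\pr_2 \circ s =_* f\) forms contractible data, so the lift type is equivalent to the type \(A \to_* (A \to_* A, f)\) of pointed maps into the first factor, pointed at \(f\).

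It remains to see that \(A \to_* (A\to_* A, f)\) is contractible. By \cref{prop:pmap-hspace}, \(A \to_* A\) is itself a left-invertible H-space under pointwise multiplication, hence homogeneous: composing the inverse of (pointwise) left multiplication by \(f\) with left multiplication by \(\id\) yields a self-equivalence of \(A \to_* A\) carrying \(f\) to \(\id\) (both left multiplications are equivalences sending the constant map to \(f\) and to \(\id\) respectively, by the remark following \cref{prop:pmap-hspace}). This gives \((A \to_* A, f) \simeq_* (A \to_* A, \id)\), so by postcomposition \(A \to_* (A\to_* A, f) \simeq A \to_* (A\to_* A, \id)\). The right-hand side is the \(f \jeq \id\) case of the identification above, and combined with \cref{cor:hspace-pointed-section} — which identifies pointed lifts of \(\id\) with pointed sections of \(\ev\) — it is equivalent to \(\HSpace{A}\). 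By hypothesis \(\HSpace{A}\) is contractible, so the lift type is contractible and \(F_1 = F_2\), as needed.

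I expect the main obstacle to be the basepoint bookkeeping: one must consistently point \((A \to A)\) at \(f\) rather than at \(\id\) throughout, and check that \(F_1\) and \(F_2\) genuinely assemble into pointed lifts — the coherence \(\mu_{lr}\) of the H-space structure is exactly what makes the required pointed homotopies go through. The homogeneity equivalence \((A \to_* A, f) \simeq_* (A \to_* A, \id)\) is the other essential use of left-invertibility, and it is worth isolating as a small lemma on moving the basepoint in a left-invertible H-space before running the argument above.
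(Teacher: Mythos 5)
Your proof is correct, and it extracts the H-space equation in a genuinely different way from the paper, even though both arguments run on the same engine. The paper's proof is a direct computation: it defines a single ``deviation'' map \(\nu : A \land A \to_* A\) on the smash product by \(\nu(\smin(a,b)) :\jeq \big(f(a \cdot b) \rinv f(b)\big) \rinv f(a)\), sending the auxiliary point constructors to \(\pt\) and checking the path constructors via the unit laws, then invokes \cref{thm:space-of-H-space-structures} to conclude that \(A \land A \to_* A\) is contractible, so \(\nu\) is constant and the equation follows by multiplying back. You instead exhibit the two maps \(f \circ \mu\) and \(\mu \circ (f \times f)\) as two points of the type of pointed lifts of \(f\) through \(\ev : (A \to A, f) \to_* A\) --- equivalently, by \cref{prop:equiv-extensions-pointed-lifts}, two \((f,f)\)-extensions, the second of which is exactly the one built in \cref{lem:extension-exists-H-space} --- and then prove that this entire lift type is contractible by carrying it through the (repointed) splitting of \cref{prop:ev-splits}, a singleton contraction, and the homogeneity equivalence \((A \to_* A, f) \simeq_* (A \to_* A, \id)\), landing on \(A \to_* (A \to_* A, \id) \simeq \HSpace{A}\). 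Both proofs consume the hypotheses in the same places (contractibility of \(\HSpace{A}\), with left-invertibility powering \cref{prop:ev-splits} in your case and the division operations in the paper's), and both ultimately rest on the equivalence \(\HSpace{A} \simeq (A \land A \to_* A)\); what yours buys is that it never touches the smash-product higher inductive type or the divisions, staying at the level of extension/lift types, at the price of the basepoint bookkeeping you correctly flag --- the repointing of the splitting at \(f\) (which works since \(\pr_2 \circ e\) and \(\ev\) agree judgmentally, with pointedness handled by \cref{lem:unpointed-to-pointed-homotopy}) and the homogeneity repointing, which is precisely the remark following \cref{prop:pmap-hspace}. A pleasant by-product of your route, invisible in the paper's proof, is the intermediate statement that for a left-invertible H-space the type of \((f,f)\)-extensions is equivalent to \(\HSpace{A}\) for \emph{every} pointed \(f : A \to_* A\), not just for \(f \jeq \id\).
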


\begin{proof}
  Let \(f : A \to_* A\) be a pointed map.
  We will define an associated map \( \nu : A \land A \to_* A \), which records
  how $f$ deviates from being an H-space map.
  We define \( \nu(\smin(a,b)) :\jeq \big(f(a) \cdot f(b)\big) \linv f(a \cdot b) \),
  $\nu(\auxl) :\jeq \pt$, and $\nu(\auxr) :\jeq \pt$.
  For $b : A$, we have a path
  $\nu(\smin(\pt, b)) \jeq \big(f(\pt) \cdot f(b)\big) \linv f(\pt \cdot b)
  = \big(\pt \cdot f(b)\big) \linv f(b)
  = f(b) \linv f(b)
  = \pt$,
  and similarly for the other path constructor.
  Since \(A\) admits a unique H-space structure, the type \(A \land A \to_* A\) is contractible by \cref{thm:space-of-H-space-structures}.
  Consequently, \(\nu\) is constant, whence for all \( a,b : A \) we have
  \( \big(f(a) \cdot f(b)\big) \linv f(a \cdot b) = \pt \), and therefore
  \[ f(a \cdot b) = f(a) \cdot f(b). \qedhere \]
\end{proof}

\begin{rmk}
Note that when $A$ and $B$ are two pointed types, each with unique H-space structures,
it is not necessarily the case that every pointed map $f : A \pto B$ is an H-space map.
For example, the squaring operation gives a natural transformation $H^2(X; \Zb) \to H^4(X; \Zb)$
which is represented by a map $\EM{\Zb}{2} \pto \EM{\Zb}{4}$.
But since squaring isn't a homomorphism, this map isn't an H-space map.
\end{rmk}

\begin{prop} \label{prop:unique-associative}
  Suppose \(A\) is a pointed type with a unique H-space structure which is left-invertible.
  Then the H-space structure is necessarily associative.
\end{prop}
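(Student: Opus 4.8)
The plan is to encode the failure of associativity as a family of pointed maps out of the smash product, and to show each such map is null by appealing to the contractibility of $A \land A \pto A$. The first step is to record the only serious input I expect to use: since $A$ has a unique H-space structure, $\HSpace{A}$ is contractible, and since $A$ is left-invertible, \cref{thm:space-of-H-space-structures} transports this to the statement that $A \land A \pto A$ is contractible. Notably I do not expect to need \cref{lem:unique-abelian}: abelianness is irrelevant here, and left-invertibility alone is what makes the division $\linv$ available and what forces the associativity defect to vanish on the wedge. I would also remark briefly why the immediately preceding \cref{prop:unique-pmap} is not the right tool, even though it is tempting: applying ``is an H-space map'' to the pointed self-maps one can actually write down (e.g.\ pointified translations) yields distributivity-type identities rather than associativity, so one must return to \cref{thm:space-of-H-space-structures} directly.

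Fixing $c : A$, I would define the \emph{associativity defect} $q_c : A \land A \pto A$ on point constructors by $q_c(\smin(a,b)) :\jeq \big((a \cdot b) \cdot c\big) \linv \big(a \cdot (b \cdot c)\big)$, together with $q_c(\auxl) :\jeq \pt$ and $q_c(\auxr) :\jeq \pt$, pointing it at $\auxl$. To see that this descends to the smash I must exhibit the glue data: using the unit laws $\mu_l,\mu_r$ together with the identity $x \linv x = \pt$ (which holds because $x \linv x \jeq \mu(x,-)^{-1}(x)$ and $\mu(x,\pt) = x$), one computes $q_c(\smin(\pt,b)) = (b\cdot c)\linv(b\cdot c) = \pt$ and, symmetrically, $q_c(\smin(a,\pt)) = (a\cdot c)\linv(a\cdot c) = \pt$. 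This is exactly the shape of verification already carried out in the proof of \cref{prop:unique-pmap}, and I expect it to be the only genuinely fiddly part of the argument: one must supply the $\gluel$ and $\gluer$ $2$-cells coherently, not merely check the endpoint equalities.

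With $q_c$ in hand, contractibility of $A \land A \pto A$ forces $q_c$ to equal the constant pointed map, so $q_c(\smin(a,b)) = \pt$ for all $a,b : A$. Since $x \linv y \jeq \mu(x,-)^{-1}(y)$ equals $\pt$ exactly when $y = \mu(x,\pt) = x$, the vanishing of $q_c(\smin(a,b))$ is precisely the equation $(a \cdot b)\cdot c = a \cdot (b \cdot c)$. As $c$ was arbitrary, assembling these identities produces the associator $\alpha : \prod_{a,b,c : A} \mu(\mu(a,b),c) = \mu(a,\mu(b,c))$, which is what associativity requires. The main obstacle, to reiterate, is purely the bookkeeping of the glue cells making $q_c$ a well-defined pointed map out of $A \land A$; the conceptual content is the one-line observation that the associativity defect is trivial whenever one of $a,b$ is the basepoint, so it factors through the smash, where uniqueness of the H-space structure annihilates it.
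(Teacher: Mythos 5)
Your proof is correct and is essentially the paper's own argument: fix one variable, package the associativity defect as a pointed map out of $A \land A$, and kill it using the contractibility of $A \land A \pto A$ supplied by \cref{thm:space-of-H-space-structures}. The only cosmetic difference is that you fix the last variable $c$ and use left division $\linv$, whereas the paper fixes the first variable $a$ and uses right division $\rinv$; your variant actually matches the stated left-invertibility hypothesis more directly (the paper's use of $\rinv$ implicitly relies on abelianness via \cref{lem:unique-abelian}).
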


\begin{proof}
  Let $a : A$.  Define a map $\nu : A \wedge A \pto A$ as follows.
  We let \( \nu(\smin(b,c)) :\jeq ((a \cdot b) \cdot c) \linv (a \cdot (b \cdot c)) \),
  $\nu(\auxl) :\jeq \pt$, and $\nu(\auxr) :\jeq \pt$.
  For $c : A$, we have a path $\nu(\smin(\pt, c)) \jeq ((a \cdot \pt) \cdot c) \linv (a \cdot (\pt \cdot c)) = (a \cdot c) \linv (a \cdot c) = \pt$,
  and similarly for the other path constructor.
  Since \(A\) admits a unique H-space structure, the type \(A \land A \to_* A\) is contractible by \cref{thm:space-of-H-space-structures}.
  Consequently, for each $a$, \(\nu\) is constant.
  It follows that for all \( a,b,c : A \) we have
  \( ((a \cdot b) \cdot c) \linv (a \cdot (b \cdot c)) = \pt \), and therefore
  \[ a \cdot (b \cdot c) = (a \cdot b) \cdot c. \qedhere \]
\end{proof}

Note that if $A \wedge A \pto A$ is contractible, then it follows from the
smash-hom adjunction that $A^{\wedge n} \pto A$ is contractible for each $n \geq 2$,
where $A^{\wedge n}$ denotes the smash power.

\section{Central types}\label{sec:central-types}

In this and the next section we focus on pointed types which we call \emph{central}.
Centrality is an elementary property with remarkable consequences.
For example, in the next section we will see that every central type is an infinite loop space (\cref{cor:central-infinite-loop-space}).
To show this, we require a certain amount of theory about central types.
We first show that every central type has a unique H-space structure.
When \(A\) is already known to be an H-space, we give several conditions which are equivalent to \(A\) being central.
From this, it follows that every Eilenberg--Mac~Lane space $\EM{G}{n}$, with $G$ abelian and $n \geq 1$, is central.
We also prove several other results which we will need in the next section.

\begin{defn}
  Let \(A\) be a pointed type.
  The \textbf{center of \boldmath{\(A\)}} is the type \( ZA :\jeq \pcomp{(A \to A)}{\id}\), which comes with a natural map \(\evid : ZA \to_* A \) (see \cref{defn:evaluation-fibrations}).
  If the map \(\evid\) is an equivalence, then \(A\) is \textbf{central}.
\end{defn}

\begin{rmk}
  The terminology ``central'' comes from higher group theory.
  Suppose \(A :\jeq B G \) is the delooping of an \(\infty\)-group \(G\).
  The center of \(G\) is the \(\infty\)-group \( ZG :\jeq \prod_{x : BG} \, (x = x) \), with delooping \(BZG :\jeq \pcomp{(BG \simeq BG)}{\id}\), which is our \(ZA\).
\end{rmk}

Central types and H-spaces are connected through evaluation fibrations:

\begin{prop} \label{prop:hspace-pointed-section-comp}
  Suppose that \(A\) is central.
  Then \(A\) admits a unique H-space structure.
  In addition, $A$ is connected, so this H-space structure is both left-
  and right-invertible.
\end{prop}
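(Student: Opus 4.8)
The plan is to use the characterization of H-space structures as pointed sections of the evaluation fibration (\cref{cor:hspace-pointed-section}), together with the defining property of centrality. First I would observe that since $A$ is central, the map $\evid : \pcomp{(A \to A)}{\id} \to_* A$ is by definition an equivalence. An equivalence has a (unique) section, and in fact the type of pointed sections of an equivalence is contractible. Thus the type of pointed sections of $\evid$ is contractible. By \cref{cor:connected-hspace-pointed-sections}, once we know $A$ is connected, the type of H-space structures on $A$ is equivalent to exactly this type of pointed sections, and hence is contractible. A contractible type is in particular inhabited and has all elements equal, which is precisely the statement that $A$ admits a \emph{unique} H-space structure.

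Before applying \cref{cor:connected-hspace-pointed-sections}, I must first establish connectivity, so I would prove that claim next. The idea is that $\pcomp{(A \to A)}{\id}$ is connected essentially by construction: it is the path component of $\id$ inside $A \to A$, and a path component is always connected. Since $\evid$ is an equivalence, $A$ is equivalent to this connected type, so $A$ is connected too. This gives the ``$A$ is connected'' clause of the statement and simultaneously licenses the use of \cref{cor:connected-hspace-pointed-sections} (or directly \cref{cor:hspace-pointed-section}, since for a connected type the relevant component is the whole of the path space $\pt = \pt$ anyway).

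Finally, for the invertibility clause I would invoke the standard fact that any connected H-space is left- and right-invertible. The point is that for fixed $a : A$, whether $a \cdot -$ is an equivalence is a proposition, and it holds at the basepoint (where $\pt \cdot -$ is homotopic to $\id_A$ via the unit law). Since $A$ is connected and being an equivalence is a proposition, the claim propagates from the basepoint to all of $A$, so $a \cdot -$ is an equivalence for every $a$, i.e.\ the structure is left-invertible; the right-invertible case is dual.

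The main obstacle is not any single step but rather assembling the pieces in the right order: one must secure connectivity \emph{before} appealing to \cref{cor:connected-hspace-pointed-sections}, and one must phrase the contractibility of the section type carefully (the type of pointed sections of an equivalence is contractible, not merely inhabited, which is what upgrades ``unique'' from ``a single component'' to genuine contractibility of $\HSpace{A}$). The invertibility argument, while routine, relies essentially on connectivity and on being an equivalence being a proposition, so the connectivity step is really the linchpin on which everything else rests.
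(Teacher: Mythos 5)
Your proof is correct and takes essentially the same route as the paper's: centrality makes $\evid : \pcomp{(A\to A)}{\id} \to_* A$ an equivalence, whose pointed sections form a contractible type, which via \cref{cor:connected-hspace-pointed-sections} shows that $\HSpace{A}$ is contractible; connectedness follows because $A$ is equivalent to a path component, and left-/right-invertibility propagates from the unit laws at the basepoint by connectedness, since being an equivalence is a proposition. Your explicit care in establishing connectivity \emph{before} invoking \cref{cor:connected-hspace-pointed-sections} is, if anything, a slight tightening of the paper's ordering of the same steps.
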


\begin{proof}
  Since \(\evid\) is an equivalence, it has a unique section.
  By \cref{cor:connected-hspace-pointed-sections}, we deduce that \(A\) has a unique H-space structure $\mu$.
  The equivalence $\evid : \pcomp{(A \to A)}{\id} \simeq A$
  implies that $A$ is connected.
  Then, since $\mu(\pt, -)$ and $\mu(-, \pt)$ are both equal to the identity,
  it follows that $\mu$ is left- and right-invertible.
\end{proof}

It follows from \cref{prop:unique-associative,lem:unique-abelian} that the unique H-space structure on a central type is associative and coherently abelian.

\begin{rmk}\label{rmk:non-coherent-HSpace}
In contrast, the type of non-coherent H-space structures on a central type $A$
is rarely contractible.
We'll show here that it is equivalent to the loop space $\Omega A$.
First consider the type of binary operations $\mu : A \to (A \to A)$
which \emph{merely} satisfy the left unit law.
This is equivalent to the type of maps $A \to \pcomp{(A \to A)}{\id}$, since $A$ is connected.
Such a map $\mu$ satisfies the right unit law if and only if the composite
$\evid \circ \mu : A \to A$ is the identity map.
In other words, $\mu$ must be a section of the equivalence $\evid$,
so there is a contractible type of such $\mu$.

The left unit law says that $\mu$ sends $\pt$ to $\id$.
After post-composing with $\evid$, it therefore says that it sends $\pt$ to $\id(\pt)$,
which equals $\pt$.
So the type of left unit laws is $\pt = \pt$, i.e., the loop space $\Omega A$.
Note that we imposed the left unit law both merely and
purely, but that doesn't change the type.
So it follows that the type of all non-coherent H-space structures on
a central type $A$ is $\Omega A$.
\end{rmk}

We give conditions for an H-space to be central, in which case the H-space structure is the unique one coming from centrality.
For the next two results, write
\[ F :\jeq \sum_{f : A \to_* A} \; \Trunc{f = \id} \]
for the fibre of \( \evid : \pcomp{(A \to A)}{\id} \to_* A \) over \( \pt : A \).
Note that the equality $f = \id$ is in the type of \emph{unpointed} maps $A \to A$.

\begin{lem}\label{lem:F}
  Suppose that \( A \) is an H-space.
  Then \(F \simeq \pcomp{(A \to_* A)}{\id} \).
\end{lem}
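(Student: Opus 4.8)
The plan is to realize both $F$ and $\pcomp{(A \pto A)}{\id}$ as subtypes of the type $A \pto A$ of pointed self-maps, each cut out by a proposition, and then to reduce the claim to a fibrewise logical equivalence of those propositions. First I would reorganize the $\Sigma$-type defining the fibre of $\evid$ over $\pt$: an element of it is a map $g : A \to A$ together with $\Trunc{g = \id}$ and a path $g(\pt) = \pt$, and regrouping the latter with $g$ exhibits $F$ as $\Sigma_{f : A \pto A} \Trunc{\underline{f} = \id}$, where $\underline{f}$ is the underlying unpointed map and the truncation is of unpointed identifications. On the other side, $\pcomp{(A \pto A)}{\id} \jeq \Sigma_{f : A \pto A}(\trunc{f}_0 = \trunc{\id}_0)$, and since the identity type of a $0$-truncation is the propositional truncation of the identity type, its second component is the proposition $\Trunc{f =_* \id}$ that $f$ is merely \emph{pointedly} homotopic to $\id$. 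Thus it suffices to prove, for each $f : A \pto A$, that $\Trunc{\underline{f} = \id} \Leftrightarrow \Trunc{f =_* \id}$; as both sides are propositions, the induced map of $\Sigma$-types is then automatically an equivalence.

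For the two directions: $(\Leftarrow)$ is immediate, since a pointed homotopy $f =_* \id$ forgets to an unpointed homotopy and hence to an identification $\underline{f} = \id$, which we truncate. For $(\Rightarrow)$, because the target $\Trunc{f =_* \id}$ is a proposition I may eliminate the truncation and assume a genuine unpointed identification $p : \underline{f} = \id$. This is exactly the input consumed by \cref{lem:unpointed-to-pointed-homotopy}: $A$ is an H-space, so condition (3) of that lemma, applied with $B :\jeq A$ and the pointed maps $f$ and $\id$, upgrades the unpointed homotopy underlying $p$ to a pointed homotopy $f =_* \id$, which I then truncate.

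The only real content lies in the $(\Rightarrow)$ direction, and this is precisely where the H-space hypothesis enters, through the upgrade \cref{lem:unpointed-to-pointed-homotopy}; so once that lemma is invoked no genuine obstacle remains. (Connectedness of $A$ is among the standing hypotheses but is not needed for this particular equivalence.) Everything else is the bookkeeping of reassociating a $\Sigma$-type and the standard principle that a fibrewise logical equivalence between propositional families induces an equivalence of the corresponding subtypes.
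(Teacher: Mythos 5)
Your proof is correct, but it is not the argument the paper gives. The paper's proof invokes \cref{prop:ev-splits} to obtain a global trivialization $t : \pcomp{(A \to A)}{\id} \simeq_* \pcomp{(A \to_* A)}{\id} \times A$ over $A$ (valid because a connected H-space is left-invertible) and then passes to the fibres of $\evid$ and $\pr_2$ over $\pt : A$. You instead work fibrewise over $A \pto A$: after exhibiting both $F$ and $\pcomp{(A \pto A)}{\id}$ as subtypes of $A \pto A$ cut out by the propositions $\Trunc{\underline{f} = \id}$ and $\Trunc{f =_* \id}$ respectively, you reduce the lemma to a logical equivalence between these propositions, whose only nontrivial direction is the implication (1)$\Rightarrow$(3) of \cref{lem:unpointed-to-pointed-homotopy}. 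This is precisely the alternative route the paper gestures at in the sentence immediately after its proof (``The lemma can also be shown using \cref{lem:unpointed-to-pointed-homotopy}''), so in effect you have supplied the details of that remark. Comparing the two: your argument is more elementary and slightly more general---it uses neither connectedness nor left-invertibility, only the H-space structure, so it establishes the equivalence for an arbitrary H-space $A$---whereas the paper's proof is shorter given that \cref{prop:ev-splits} is already available, and it yields a little more structure: the equivalence of fibres arises by restricting a pointed equivalence living over all of $A$, which keeps track of how the identification varies with the evaluation point rather than only producing it over $\pt$.
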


\begin{proof}
  This follows immediately from \cref{lem:unpointed-to-pointed-homotopy}.
\end{proof}

\begin{prop}\label{prop:central-tfae}
  Let $A$ be a pointed type.  Then the following are logically equivalent:
  \begin{enumerate}
  \item $A$ is central;
  \item $A$ is a connected H-space and $A \pto A$ is a set;
  \item $A$ is a connected H-space and $A \simeq_* A$ is a set;
  \item $A$ is a connected H-space and $A \pto \Omega A$ is contractible;
  \item $A$ is a connected H-space and $\Sigma A \pto A$ is contractible.
  \end{enumerate}
\end{prop}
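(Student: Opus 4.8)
The plan is to establish the logical equivalences by proving a cycle of implications together with a few direct two-way bridges, using the fibre sequence for $\evid$ as the organizing tool. The key observation is that an equivalence $\evid : \pcomp{(A \to A)}{\id} \pto A$ is detected fibrewise: since both source and target are connected (the source by construction, and $A$ must be connected whenever $\evid$ is an equivalence), the map $\evid$ is an equivalence if and only if its fibre $F$ over $\pt$ is contractible. By \cref{lem:F}, when $A$ is a connected H-space we have $F \simeq \pcomp{(A \to_* A)}{\id}$, so \emph{centrality is equivalent to $\pcomp{(A \to_* A)}{\id}$ being contractible}, i.e.\ to $A \pto A$ having contractible identity component. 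This is the hinge of the whole argument.

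First I would show $(1) \Rightarrow (2)$: if $A$ is central then by \cref{prop:hspace-pointed-section-comp} it is a connected H-space, and by the fibrewise argument above $\pcomp{(A \to_* A)}{\id}$ is contractible. A pointed type whose identity component is contractible is a set exactly when it is $0$-truncated; but the identity component being contractible means every component is contractible (any two pointed self-maps that are unpointed-homotopic can be compared, and each component of $A \pto A$ is a torsor over the identity component under the H-space structure from \cref{prop:pmap-hspace}), so $A \pto A$ is a set. For $(2) \Leftrightarrow (3)$, note that $A \pto A$ and $A \simeq_* A$ have the same identity component (being an equivalence is a property, and the identity is an equivalence), so one is a set iff the other is; this is essentially the remark after \cref{defn:evaluation-fibrations}. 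For $(2) \Rightarrow (1)$: if $A$ is a connected H-space with $A \pto A$ a set, then $\pcomp{(A \to_* A)}{\id}$ is a set with contractible... more precisely it is a connected set, hence contractible, so $F$ is contractible and $\evid$ is an equivalence.

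For conditions $(4)$ and $(5)$ I would use the smash--hom adjunction and the loop--suspension adjunction to rewrite $\pcomp{(A \to_* A)}{\id}$. The identity component of $A \pto A$ is contractible iff $A \pto A$ is a set, and I would relate this to the pointed mapping spaces in $(4)$ and $(5)$ as follows. By the loop-space adjunction, $A \pto \Omega A \simeq_* \Sigma A \pto A$, which gives $(4) \Leftrightarrow (5)$ immediately. To connect these to centrality, I would observe that $A \pto \Omega A$ measures precisely the pointed self-homotopies of the identity map: there is a fibre sequence relating $\pcomp{(A \to_* A)}{\id}$, the point $\id$, and the based loop space $\Omega \pcomp{(A \to_* A)}{\id} \simeq (\id =_* \id)$, and iterating shows $\pcomp{(A \to_* A)}{\id}$ is contractible iff all its homotopy groups vanish, which can be packaged as contractibility of the pointed mapping space $A \pto \Omega A$ via $(\id =_* \id) \simeq (A \pto \Omega A)$.

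The main obstacle I anticipate is the last paragraph: making the identification $(\id =_* \id) \simeq (A \pto \Omega A)$ precise and showing that contractibility of this single mapping space suffices to force contractibility of the whole identity component $\pcomp{(A \to_* A)}{\id}$, rather than merely the vanishing of $\pi_1$. The clean way to handle this is to note that $\pcomp{(A \to_* A)}{\id}$ is a connected H-space (being the identity component of the H-space $A \pto_* A$ from \cref{prop:pmap-hspace}, using left-invertibility), so it is contractible iff its loop space is, and its loop space at $\id$ is $(\id =_* \id) \simeq A \pto \Omega A$; thus condition $(4)$ is exactly contractibility of the loop space of a connected H-space, which is equivalent to contractibility of the H-space itself, i.e.\ to centrality. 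I would take care that all connectivity and truncation bookkeeping is stated for the identity component specifically, since the full types $A \pto A$ and $A \simeq_* A$ need not be connected.
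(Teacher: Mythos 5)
Your proposal is correct and follows essentially the same route as the paper's proof: the hinge is identical (centrality iff the fibre $F \simeq \pcomp{(A \to_* A)}{\id}$ is contractible, via \cref{lem:F}), the left-invertible H-space structure on $A \pto A$ from \cref{prop:pmap-hspace} is used in the same way to equate all components, and conditions (4) and (5) are handled by the same loop-space and loop--suspension identifications. One cosmetic slip: $\pcomp{(A \to_* A)}{\id}$ is not literally a sub-H-space of $A \pto A$ (the unit of the pointwise multiplication is the constant map, which lies in a different component), but nothing in your argument depends on this---a pointed connected type is contractible iff its loop space is, and left-invertibility, which you already invoke, supplies the translation equivalence between the identity component and the constant-map component that justifies your identification $(\id =_* \id) \simeq (A \pto \Omega A)$.
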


\begin{proof}
  (1) $\implies$ (2):
  Assume that $A$ is central.
  Then \cref{prop:hspace-pointed-section-comp} implies that $A$ is a connected H-space.
  Since \(A\) is a left-invertible H-space, so is \(A \to_* A\), by \cref{prop:pmap-hspace}.
  Therefore all components of \(A \to_* A\) are equivalent to \(\pcomp{(A \to_* A)}{\id}\), and thus to \(F\) by \cref{lem:F}.
  Now, \(F\) is contractible since \(\evid\) is an equivalence, and consequently \(A \to_* A\) is a set since all of its components are contractible.

  (2) $\implies$ (3):
  This follows from the fact that \(A \simeq_* A\) embeds into \(A \to_* A\).

  (3) $\implies$ (1):
  If \((A \simeq_* A)\) is a set, then its component $\pcomp{(A \to_* A)}{\id}$
  is contractible.  Therefore \(F\) is contractible, by \cref{lem:F}.
  It follows that \(\evid\) is an equivalence, since \(A\) is connected.
  Hence \(A\) is central.

  (2) $\iff$ (4):  Since $A$ is a left-invertible H-space, so is $A \pto A$.
  The latter is therefore a set if and only if the component of the constant
  map is contractible, which is true if and only if the loop space $\Omega (A \pto A)$
  is contractible.  Finally, the equivalence $\Omega (A \pto A) \simeq (A \pto \Omega A)$
  shows that this is true if and only if $A \pto \Omega A$ is contractible.

  (4) $\iff$ (5):  This follows from the equivalence
  $(A \pto \Omega A) \simeq (\Sigma A \pto A)$.
\end{proof}

\begin{cor}\label{cor:retract-of-central}
If $A$ is central and $A'$ is a pointed retract of $A$, then $A'$ is central.
\end{cor}

\begin{proof}
By \cref{prop:retract-of-hspace}, $A'$ is an H-space.
Also note that $A'$ is connected, as a retract of the connected type $A$.

By \cref{prop:central-tfae}, it suffices to show that $A' \pto \Omega A'$ is contractible.
We do this by showing that it is a retract of the type $A \pto \Omega A$,
which is contractible by \cref{prop:central-tfae}.
We define a map $(A' \pto \Omega A') \to (A \pto \Omega A)$ by sending
$f$ to $(\Omega s)fr$ and a map in the other direction by sending
$g$ to $(\Omega r)gs$.
The composite sends $f$ to $(\Omega r)(\Omega s)f r s$, which is equal to $f$
because $r \circ s =_* \id$.
\end{proof}

\begin{ex}\label{ex:EM}
Consider the Eilenberg--Mac~Lane space $\EM{G}{n}$ for $n \geq 1$ and $G$ an abelian group.
It is a pointed, connected type.  Since $\EM{G}{n} \simeq \Omega \EM{G}{n+1}$, it is an H-space.
By~\cite[Theorem~5.1]{BvDR}, $\EM{G}{n} \simeq_* \EM{G}{n}$ is equivalent to the set of automorphisms of $G$.
It therefore follows from \cref{prop:central-tfae} that $\EM{G}{n}$ is central.
We will see in \cref{prop:bautpl-EM} a more self-contained proof of this result.
\end{ex}

\begin{ex}
  The fact that $\pi_4(\Sb^3) \simeq \Zb/2$~\cite{Brunerie2016} means that
  $\Sb^4 \pto \Sb^3$ is not contractible, and so $\Sb^3$ is
  not central, by \cref{prop:central-tfae}(5).
  Since this is in the stable range, it follows that $\Sb^n$ is not
  central for $n \geq 3$.
  By \cref{cor:retract-of-central}, any product $\Sb^n \times X$ with $X$ pointed
  is not central for $n \geq 3$.
\end{ex}

\begin{rmk}
  For a pointed type $A$, we have seen that $A$ being central is logically equivalent
  to $A$ being a connected H-space such that $A \simeq_* A$ is a set.
  It is natural to ask whether the reverse implication holds without the assumption
  that $A$ is an H-space.
  However, this is not the case.  Consider, for example, the pointed, connected type \(\EM{G}{1}\)
  for a non-abelian group \(G\).
  Then $\EM{G}{1} \simeq_* \EM{G}{1}$ is equivalent to the set of group automorphisms of $G$.
  If \(\EM{G}{1}\) were central, then \(G\) would be the fundamental group of an H-space,
  and would therefore be abelian.
\end{rmk}

\begin{rmk}
  In \cref{rmk:symmetric-h-spaces} we saw that every unique H-space is a symmetric H-space. In particular, every central H-space is a symmetric H-space, and therefore the binary operation of a central H-space extends to an operation on unordered pairs
  \begin{equation*}
    \begin{tikzcd}
      A^2 \arrow[dr,"\mu"] \arrow[d] \\
      \sum_{X:B\Zb/2}A^X \arrow[r,swap,"\tilde{\mu}"] & A \period
    \end{tikzcd}
  \end{equation*}
  We claim that the binary operation furthermore extends to the type of \emph{genuine} unordered pairs~\cite{B}. The type $\GUP(A)$ of \textbf{genuine unordered pairs} of elements of $A$ is defined to be the pushout
  \begin{equation*}
    \begin{tikzcd}
      B\Zb/2\times A \arrow[d,swap,"\delta"] \arrow[r,"\pi_2"] & A \arrow[d] \\
      \sum_{X:B\Zb/2}A^X \arrow[r] & \GUP(A) \comma
    \end{tikzcd}
  \end{equation*}
  where $\delta(X,a):=(X,\const_a)$. To see that $\tilde{\mu}$ extends to the genuine unordered pairs, we have to show that
  \begin{equation*}
    \tilde{\mu}_X(\const_a)=\mu(a,a)
  \end{equation*}
  for every $X:B\Zb/2$ and every $a:A$. To see this, note that both $a\mapsto \tilde{\mu}_X(\const_a)$ and $a\mapsto\mu(a,a)$ are pointed maps $A\to_\ast A$. Since $A$ is assumed to be a central H-space, the type $A\to_\ast A$ is a set, so the type of pointed homotopies
  \begin{equation*}
    (a\mapsto\tilde{\mu}_X(\const_A))=_\ast (a\mapsto \mu(a,a))
  \end{equation*}
  is a proposition. Therefore it suffices to construct a pointed homotopy
  \begin{equation*}
    (a\mapsto\tilde{\mu}_{\mathbf{2}}(\const_A))=_\ast (a\mapsto \mu(a,a))
  \end{equation*}
  We clearly have such a pointed homotopy, since $\tilde{\mu}$ is an extension of $\mu$. Note that in this argument we made essential use of the assumption that $A$ is central. We do not currently know whether the binary operation of a unique H-space can be extended to the genuine unordered pairs.
\end{rmk}

By the previous proposition, the type \(A \to_* A\) is a set whenever \(A\) is central.
Presently we observe that it is in fact a ring.

\begin{cor} \label{cor:central-ring}
  For any central type \(A\), the set \(A \to_* A \) is a ring under pointwise multiplication and function composition.
\end{cor}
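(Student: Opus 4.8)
The plan is to exhibit the two operations on the set $A \to_* A$ and verify the ring axioms, leaning on the structural results already established. The underlying abelian group comes from \cref{prop:pmap-hspace}: since $A$ is central, it is a left-invertible H-space by \cref{prop:hspace-pointed-section-comp}, so $A \to_* A$ is itself a left-invertible H-space under pointwise multiplication. Because $A \to_* A$ is a set (by \cref{prop:central-tfae}), this pointwise H-space structure is a genuine commutative monoid operation, and left-invertibility on a set upgrades it to an abelian group; commutativity follows since the multiplication on $A$ is coherently abelian (again \cref{prop:hspace-pointed-section-comp}), hence abelian pointwise. I will take this pointwise multiplication as the additive structure, with additive unit the constant map.

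The multiplicative structure is function composition, which is associative and unital (with unit $\id_A$) on the nose; since everything lives in a set, no higher coherences are needed. The remaining content is the two distributivity laws and the compatibility of the two units. Here the key input is \cref{prop:unique-pmap}: every pointed self-map $f : A \to_* A$ is an H-space map, so $f(a \cdot b) = f(a) \cdot f(b)$ for all $a, b$. This gives one distributive law directly: for $f, g, h : A \to_* A$, precomposition distributes because $f \circ (g \cdot h) = (f \circ g) \cdot (f \circ h)$ pointwise uses $f(g(x) \cdot h(x)) = f(g(x)) \cdot f(h(x))$, which is exactly the H-space-map property of $f$. The other distributive law, $(g \cdot h) \circ f = (g \circ f) \cdot (h \circ f)$, is immediate and needs no hypothesis, since pointwise multiplication is defined argument-wise: $(g \cdot h)(f(x)) \jeq g(f(x)) \cdot h(f(x))$.

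The main obstacle I anticipate is the \textbf{left distributivity law}, since it is the only axiom that genuinely requires that self-maps respect the multiplication, rather than following formally from the definition of pointwise operations. This is precisely why \cref{prop:unique-pmap} is needed, and it is worth noting that this proposition applies because a central type has a unique, left-invertible H-space structure. All other verifications---associativity and unitality of composition, the abelian group structure, and right distributivity---are formal once one observes that we are working inside a set, so that it suffices to check equalities of functions pointwise and no coherence data needs to be tracked.

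\begin{proof}
  By \cref{prop:hspace-pointed-section-comp}, $A$ admits a unique H-space structure which is left-invertible and coherently abelian, and by \cref{prop:central-tfae} the type $A \to_* A$ is a set.
  Pointwise multiplication makes $A \to_* A$ into a left-invertible H-space by \cref{prop:pmap-hspace}; as this is a set and the multiplication on $A$ is abelian, it follows that $A \to_* A$ is an abelian group under this operation, with unit the constant map.
  Function composition is associative and unital with unit $\id_A$, since $A \to_* A$ is a set.
  For right distributivity, note that $(g \cdot h) \circ f$ and $(g \circ f) \cdot (h \circ f)$ agree since $(g \cdot h)(f(x)) \jeq g(f(x)) \cdot h(f(x))$.
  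For left distributivity, \cref{prop:unique-pmap} shows every pointed self-map $f$ is an H-space map, so $f(g(x) \cdot h(x)) = f(g(x)) \cdot f(h(x))$, giving $f \circ (g \cdot h) = (f \circ g) \cdot (f \circ h)$.
  Hence $A \to_* A$ is a ring.
\end{proof}
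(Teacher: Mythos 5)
Your proof takes essentially the same route as the paper's: pointwise multiplication supplies the additive structure, composition supplies the multiplicative structure, and the only substantive axiom---distributivity of composition over pointwise multiplication on the left---is obtained from \cref{prop:unique-pmap}, exactly as the paper does. The right distributivity law, which you spell out, is treated as immediate in the paper, and your use of \cref{prop:pmap-hspace}, \cref{prop:hspace-pointed-section-comp}, and \cref{prop:central-tfae} matches the paper's (partly implicit) inputs.

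One imprecision is worth fixing: you assert that left-invertibility of the pointwise operation on the set \(A \to_* A\) ``upgrades it to an abelian group.'' A unital, commutative, left-invertible operation on a set need not be associative (commutative loops need not be groups), so this claim does not by itself produce a group; associativity of the addition is genuinely needed and does not come for free. The paper supplies it by invoking that the unique H-space structure on a central type is associative (\cref{prop:unique-associative}), which then makes the pointwise operation associative, and only then does left-invertibility on a set yield inverses. Adding that one citation closes the gap; everything else in your argument is correct.
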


\begin{proof}
  It follows from \(A\) being a commutative and associative H-space that the set \(A \to_* A\) is an abelian group.
  The only nontrivial thing we need to show is that function composition is linear.
  Let \(f, g, \phi : A \to_* A\), and consider \(a : A\).
  By \cref{prop:unique-pmap}, \(\phi\) is an H-space map.
  Consequently,
  \[ \big( \phi \circ (f \cdot g) \big) (a) \jeq \phi( f(a) \cdot g(a) ) =  \phi(f(a)) \cdot \phi(g(a))
    \jeq \big( (\phi \circ f) \cdot (\phi \circ g) \big) (a). \qedhere \]
\end{proof}

The following remark gives some insight into the nature of the ring \(A \pto A\).

\begin{rmk}
  If \(BG\) is an \(\infty\)-group and \(X\) is a pointed type, recall that a bundle over \(X\) is \emph{\(G\)-principal} if it is classified by a map \(X \pto BG\) (see e.g.~\cite[Def.~2.23]{Sco20} for a formal definition which easily generalizes to arbitrary \(\infty\)-groups).
  In particular, it is not hard to see that the Hopf fibration of \(G\) (as the loop space of \(BG\)) is a \(G\)-principal bundle, i.e., classified by a map \(\Sigma G \pto BG \).

  In \cref{prop:loops-bautpl} we will see that any central type \(A\) has a delooping \(\BAutpl(A)\).
  This implies that we have an equivalence
  \[ (A \pto A) \ \simeq \ (\Sigma A \to_* \BAutpl(A)). \]
  Thus we see that \( A \to_* A\) is \emph{the ring of principal \(A\)-bundles over \(\Sigma A\)}.
  The equivalence above maps the identity \(\id : A \pto A\) to the Hopf fibration of \(A\) (as a principal \(A\)-bundle), meaning the Hopf fibration is the multiplicative unit from this perspective.
\end{rmk}

In the remainder of this section we collect various results which are needed later on.
The first result is that ``all'' of the evaluation fibrations of a central type \(A\) are equivalences:

\begin{prop} \label{prop:central-ev-f-equiv}
  Let $A$ be a central type and let \(f : A \to_* A\) be a pointed map.
  The evaluation fibration \(\ev_f : \pcomp{(A \to A)}{f} \to_* A\) is an equivalence, with inverse given by \(a \mapsto a \cdot f (-) \).
\end{prop}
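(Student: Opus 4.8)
The plan is to exhibit an explicit inverse to $\ev_f$ and verify it on both sides. Since $A$ is central, by \cref{prop:hspace-pointed-section-comp} it carries a (unique, coherently abelian, associative) H-space structure, and this structure is left- and right-invertible because $A$ is connected. The candidate inverse is the map $\phi : A \to_* (A \to A)$ defined by $\phi(a) :\jeq a \cdot f(-)$, i.e.\ $\phi(a)(x) :\jeq a \cdot f(x)$. First I would check that $\phi$ lands in the correct component $\pcomp{(A \to A)}{f}$: at the basepoint $\phi(\pt) = \pt \cdot f(-) = f(-)$ via the left unit law, so $\phi(\pt)$ is (homotopic to) $f$; since $A$ is connected and $\phi$ is pointed, its image lies in the component of $f$, so $\phi$ is a well-defined pointed map $A \to_* \pcomp{(A \to A)}{f}$.

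Next I would verify the two round-trip composites. For $\ev_f \circ \phi$, we have $\ev_f(\phi(a)) \jeq \phi(a)(\pt) = a \cdot f(\pt) = a \cdot \pt = a$, using that $f$ is pointed (so $f(\pt) = \pt$) and the right unit law; this gives $\ev_f \circ \phi = \id_A$. The other composite $\phi \circ \ev_f$ requires showing that for any $g$ in the component of $f$, the map $x \mapsto g(\pt) \cdot f(x)$ agrees with $g$. The clean way to see this is to reduce to the universal case: since we are proving a proposition (being an equivalence is a property), it suffices to check the homotopy on a single representative, and by connectedness of $A$ we may use the trivialization of the evaluation fibration. Concretely, \cref{prop:ev-splits} gives a pointed equivalence $(A \to A) \simeq_* (A \to_* A) \times A$ over $A$, under which $\ev_f$ corresponds to the second projection; the candidate inverse $\phi$ corresponds, after translating along this equivalence, to a section of $\pr_2$, and the two composites become manifestly identities on each factor.

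The main obstacle is the second composite $\phi \circ \ev_f = \id$: one must show that an arbitrary $g$ with $\Trunc{g = f}$ is recovered as $g(\pt) \cdot f(-)$. This is not true for a general H-space map $g$, and indeed it would fail if $g$ ranged over all of $A \to A$; it holds precisely because $g$ lies in the component of $f$. The cleanest route avoids pointwise manipulation of an unknown $g$ and instead appeals to \cref{prop:ev-splits}: under the splitting $(A \to A) \simeq_* (A \to_* A) \times A$, the component $\pcomp{(A \to A)}{f}$ restricts the first coordinate to the component of the pointed map $x \mapsto f(\pt) \linv f(x) = f(x)$ (the pointed part of $f$), and $\phi$ is exactly the section picking out that fixed first coordinate while letting the second coordinate $A$ vary freely. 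Since $\ev_f$ reads off only the second coordinate, $\phi$ and $\ev_f$ are mutually inverse on the two factors. Having established both composites, $\ev_f$ is an equivalence with inverse $a \mapsto a \cdot f(-)$, as claimed.
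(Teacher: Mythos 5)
Your setup and the easy composite are fine: $\phi(a) :\jeq a \cdot f(-)$ does land in $\pcomp{(A \to A)}{f}$ (a propositional goal, so connectedness of $A$ reduces it to $a \jeq \pt$), and $\ev_f(\phi(a)) = a \cdot f(\pt) = a$ shows $\phi$ is a section. But your treatment of the hard composite has a genuine gap. Under the splitting of \cref{prop:ev-splits}, the component $\pcomp{(A \to A)}{f}$ corresponds to $\pcomp{(A \to_* A)}{f} \times A$, and the round trip $\phi \circ \ev_f$ corresponds to $(h, a) \mapsto (f, a)$. You describe $\phi$ as ``picking out that fixed first coordinate,'' but the first coordinate is \emph{not} fixed: it ranges over the whole component $\pcomp{(A \to_* A)}{f}$, whose elements are pointed maps merely equal to $f$, not judgmentally $f$. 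The round trip is the identity precisely when that component is contractible, and nothing in your argument establishes this. Indeed, everything you wrote uses only that $A$ is a connected left-invertible H-space, and for such types the conclusion can fail: $\Sb^3$ is a connected H-space, yet $\evid : \pcomp{(\Sb^3 \to \Sb^3)}{\id} \to \Sb^3$ is not an equivalence ($\Sb^3$ is not central, since $\pi_4(\Sb^3) \simeq \Zb/2$). So the argument must invoke centrality a second time, beyond using it to produce the H-space structure. (Your alternative suggestion---that since being an equivalence is a proposition one may ``check the homotopy on a single representative''---does not work either: the statement $\phi(\ev_f(g)) = g$ is not a proposition in $g$, so connectedness of the component gives no reduction.)

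The fix is short: by \cref{prop:central-tfae} (or by \cref{lem:F} together with contractibility of the fibre $F$ of $\evid$), centrality implies $A \pto A$ is a set, hence $\pcomp{(A \to_* A)}{f}$ is contractible; then $\ev_f$ corresponds to $\pr_2 : 1 \times A \to A$ and your section is automatically the inverse. Note that the paper avoids this machinery entirely with a two-out-of-three argument: by \cref{prop:pmap-hspace}, left multiplication by $f$ in the H-space $A \to A$ gives an equivalence $\pcomp{(A \to A)}{\id} \to \pcomp{(A \to A)}{f}$, and since $f(\pt) = \pt$ one has $\ev_f \circ (f \cdot -) = \evid$; as $\evid$ is an equivalence by centrality, so is $\ev_f$, and then any section of an equivalence is its inverse. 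That route needs neither \cref{prop:ev-splits} nor \cref{prop:central-tfae}, and it isolates exactly where centrality enters.
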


\begin{proof}
  The type \(A \to A\) is a left-invertible H-space via pointwise multiplication,
  by \cref{prop:pmap-hspace}.
  So there is an equivalence $\pcomp{(A \to A)}{\id} \to \pcomp{(A \to A)}{f}$
  sending $g$ to $f \cdot g$.
  Since $f$ is pointed, we have
  \[ \ev_f(f \cdot g) \jeq (f \cdot g)(\pt) \jeq f(\pt) \cdot g(\pt) = \pt \cdot g(\pt) = g(\pt) = \ev_{\id}(g). \]
  In other words, $\ev_f \circ (f \cdot -) = \ev_{\id}$, which shows that $\ev_f$ is
  an equivalence.
  Since $f$ is pointed, the stated map is a section of \(\ev_f\), hence is an inverse.
\end{proof}

\begin{cor} \label{cor:ev-f-equation}
  Let $A$ be a central type, let \(f : A \to_* A\), and let \(g : \pcomp{(A \to A)}{f}\).
  Then for all \(a : A\), we have \( g(a) = g(\pt) \cdot f(a). \) \qed
\end{cor}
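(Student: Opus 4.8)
The plan is to read the equation straight off the inverse of the evaluation fibration supplied by \cref{prop:central-ev-f-equiv}. That proposition exhibits the map $s : A \to \pcomp{(A \to A)}{f}$ defined by $s(a) :\jeq a \cdot f(-)$ as a two-sided inverse of $\ev_f$; in particular $s$ is a retraction, so we have a homotopy $s \circ \ev_f = \id$ on $\pcomp{(A \to A)}{f}$.

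Instantiating this homotopy at the given $g$, and recalling that $\ev_f(g) \jeq g(\pt)$, yields an identification $g = s(g(\pt))$ in $\pcomp{(A \to A)}{f}$. By definition of $s$, the right-hand side is the function $x \mapsto g(\pt) \cdot f(x)$, so the identification reads $g = (x \mapsto g(\pt) \cdot f(x))$ already at the level of the component type.

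It remains to turn this identification in the component type into one of underlying functions. Since $\pcomp{(A \to A)}{f}$ is a subtype of $A \to A$---its second component is an equality in the $0$-truncation $\Trunc{A \to A}_0$, hence a proposition---the first projection is an embedding, and the identification above descends to an equality of functions $A \to A$. Evaluating at an arbitrary $a : A$ then gives $g(a) = g(\pt) \cdot f(a)$. There is no real obstacle here; the only point to keep straight is that the identification $g = s(g(\pt))$ initially lives in the path-component type and must be pushed along the embedding into $A \to A$ before evaluating.
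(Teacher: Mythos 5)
Your proof is correct and is essentially the paper's own argument: the corollary is stated with a \qed\ precisely because it follows immediately from \cref{prop:central-ev-f-equiv} by applying the homotopy $s \circ \ev_f = \id$ at $g$ and evaluating, exactly as you do. (Your remark about the projection being an embedding is harmless but unnecessary—pushing the identification down to $A \to A$ only requires $\ap$ of the projection, not that it be an embedding.)
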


Any central type has an inversion map, which plays a key role in the next section.

\begin{defn} \label{defn:inversion-map}
  Suppose that \(A\) is central.
  The \textbf{inversion map} \( \dualid : A \to A\) sends $a$ to \( \dual{a} :\jeq a \linv \pt \).
\end{defn}

The defining property of $\dual{a}$ is that $a \cdot \dual{a} = \pt$.
Since $A$ is abelian, we also have $\dual{a} \cdot a = \pt$,
so it would have been equivalent to define the inversion to be \(a \mapsto \pt \rinv a\).
Because $\pt \cdot \pt = \pt$, it follows that $\dual{\pt} = \pt$,
and from commutativity of a central H-space it follows that $a^{**} = a$ for all $a$.
Thus the inversion map \(\dualid\) is a pointed self-equivalence of \(A\) and an involution.

A curious property is that on the component of \(\dualid\), inversion of equivalences is homotopic to the identity.
This comes up in the next section.

\begin{prop} \label{prop:inversion-negation-trivial}
The map \( \phi \mapsto \phi\inv : \pcomp{(A \simeq A)}{\dualid} \to \pcomp{(A \simeq A)}{\dualid} \) is homotopic to the identity.
\end{prop}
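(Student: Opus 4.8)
The plan is to prove the stronger statement that \emph{every} $\phi$ in the component $\pcomp{(A \simeq A)}{\dualid}$ is its own inverse, so that $\phi \mapsto \phi\inv$ agrees pointwise with the identity there. First I would check that inversion lands back in this component: since $\dualid$ is an involution (noted just before \cref{defn:inversion-map}), we have $\dualid\inv = \dualid$, so $\phi \mapsto \phi\inv$ does restrict to an endomap of $\pcomp{(A \simeq A)}{\dualid}$. To give a homotopy to the identity it then suffices to produce, uniformly in $\phi$, a path $\phi\inv = \phi$; this is enough because an identification in $\pcomp{(A \simeq A)}{\dualid}$ is determined by an identification of the underlying maps $A \to A$, as being an equivalence is a property and the $0$-truncated component datum is a proposition.

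The key step is a normal form for the elements of this component. Since $\dual{\pt} = \pt$, the map $\dualid : A \to_* A$ is pointed, so \cref{cor:ev-f-equation} applies with $f :\jeq \dualid$: for any $\phi : \pcomp{(A \to A)}{\dualid}$ and any $a : A$ we obtain
\[ \phi(a) = \phi(\pt) \cdot \dual{a}. \]
Writing $c :\jeq \phi(\pt)$, this exhibits $\phi$ as the composite $(c \cdot -) \circ \dualid$.

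It remains to show that such a $\phi$ is an involution. I would compute, for $a : A$,
\[ \phi(\phi(a)) = c \cdot \dual{c \cdot \dual{a}} = c \cdot (\dual{c} \cdot a) = (c \cdot \dual{c}) \cdot a = \pt \cdot a = a, \]
using in turn the normal form; that $\dualid$ is an H-space map (\cref{prop:unique-pmap}, applicable since the central type $A$ carries a unique left-invertible H-space structure by \cref{prop:hspace-pointed-section-comp}) together with $a^{**} = a$; associativity; and $c \cdot \dual{c} = \pt$, which holds because $A$ is abelian. By function extensionality this yields $\phi \circ \phi = \id$, so $\phi$ is a two-sided inverse of itself, whence $\phi\inv = \phi$. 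Assembling these identifications over all $\phi$ gives the desired homotopy.

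I expect the only real subtlety — rather than a genuine obstacle — to be the bookkeeping in the first paragraph: confirming that inversion preserves the component and that the pointwise equalities cohere into a homotopy valued in $\pcomp{(A \simeq A)}{\dualid}$, not merely in $A \simeq A$. The mathematical heart is simply the observation that $(c \cdot -) \circ \dualid$ squares to the identity, which is immediate once the normal form from \cref{cor:ev-f-equation} is in hand.
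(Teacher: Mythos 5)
Your proof is correct, and every ingredient you cite is available before this proposition: \cref{cor:ev-f-equation} applies because $\dualid$ is pointed ($\dual{\pt} = \pt$), and \cref{prop:unique-pmap} and associativity apply because a central type has a unique left-invertible H-space structure (\cref{prop:hspace-pointed-section-comp}). The ``assembly'' step you flag as a possible subtlety is automatic: a homotopy is by definition a pointwise family of paths, and as you note, both the is-equivalence datum and the band datum are propositional, so identifications of underlying maps suffice.

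The paper's proof shares your skeleton---reduce to $\phi = \phi\inv$, i.e.\ to $\phi \circ \phi = \id$, with \cref{cor:ev-f-equation} as the key lemma---but finishes differently, and more cheaply. Since $\dualid$ is an involution, $\phi \circ \phi$ lies in $\pcomp{(A \simeq A)}{\id}$, and centrality says precisely that $\evid$ is an equivalence on that component; hence $\phi \circ \phi = \id$ follows from the \emph{single} equation $\phi(\phi(\pt)) = \pt$, which \cref{cor:ev-f-equation} gives at once: $\phi(\phi(\pt)) = \phi(\pt) \cdot \dual{\phi(\pt)} = \pt$. This one-point check renders unnecessary everything your pointwise computation leans on beyond the key lemma: the H-space map property of $\dualid$ from \cref{prop:unique-pmap}, associativity, the involution law $a^{**} = a$, and function extensionality. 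What your route buys is a slightly stronger and more explicit statement---$\phi(\phi(a)) = a$ for every $a$, via the normal form $\phi = (c \cdot -) \circ \dualid$---which would survive in a setting where one only had the algebra of the H-space to work with. What the paper's route buys is brevity, and a second illustration of the leverage centrality provides: equality of self-maps in the identity component can be tested at the basepoint alone. That observation is worth internalizing, as it recurs throughout the paper.
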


\begin{proof}
  Let \(\phi : \pcomp{(A \simeq A)}{\dualid} \).
  We need to show that \(\phi = \phi^{-1}\), or equivalently that \(\phi(\phi(\pt)) = \pt \), since \(\evid \) is an equivalence.
  (Note that \(\phi \circ \phi : \pcomp{(A \simeq A)}{\id}\).)
  Using \cref{cor:ev-f-equation}, we have that
  \[ \phi(\phi(\pt)) = \phi(\pt) \cdot \dual{\phi(\pt)} = \pt. \qedhere \]
\end{proof}

\section{Bands and torsors}\label{sec:bands-and-torsors}

We begin in \cref{ss:bands} by defining and studying
types \emph{banded} by a central type \(A\), also called \emph{\(A\)-bands}.
We show that the type $\BAutpl(A)$ of banded types is a delooping of \(A\),
that $A$ has a unique delooping,
and that every pointed self-map $A \pto A$ has a unique delooping.

In \cref{ss:tensoring-bands},
we show that $\BAutpl(A)$ is itself an H-space under a tensoring operation,
from which it follows that it is again a central type.
Thus we may iteratively consider banded types to obtain an infinite loop space structure on \(A\), which is unique.
As a special case, taking \(A\) to be \(\EM{G}{n}\) for some abelian group \(G\) produces a novel description of the infinite loop space structure on \(\EM{G}{n}\), as described in \cref{ss:EM-spaces}.

In \cref{ss:bands-and-torsors}, we define the type of \(A\)-torsors,
which we show is equivalent to the type of \(A\)-bands when \(A\) is central,
thus providing an alternate description of the delooping of $A$.
The type of $A$-torsors has been independently studied by David Wärn~\cite{Warn23},
who has shown that it is a delooping of \(A\) under the weaker assumption
that \(A\) has a unique H-space structure.

\subsection{Types banded by a central type}
\label{ss:bands}

We now study types \emph{banded} by a central type \(A\).
On this type we will construct an H-space structure, which will be seen to be central.

\begin{defn}\label{defn:bautpl}
  For a type \( A \), let \( \BAutpl(A) :\jeq \sum_{X : \Type} \, \Trunc{A = X}_0 \).
  The elements of \( \BAutpl(A) \) are types which are \textbf{banded} by $A$ or \textbf{\boldmath{\(A\)}-bands}, for short.
  We denote \(A\)-bands by \( X_p \), where \( p : \Trunc{A = X}_0 \) is the {\bf band}.
  The type \(\BAutpl(A)\) is pointed by \( A_{\trunc{\refl}_0} \).
\end{defn}

Given a band \(p : \Trunc{A = X}_0 \), we will write \(\tilde{p} : \Trunc{A \simeq X}_0\) for the associated equivalence.

\begin{rmk} \label{rmk:bautpl-locally-small}
  It's not hard to see that \( \BAutpl(A) \) is a connected, locally small type---hence essentially small, by the join construction~\cite{Rijke}.
\end{rmk}

The characterization of paths in \(\Sigma\)-types tells us what paths between banded types are.

\begin{lem}\label{lem:paths-in-bautpl}
  Consider two \(A\)-bands \(X_p\) and \(Y_q\).
  A path \(X_p = Y_q\) of \(A\)-bands corresponds to a path \( e : X = Y\) between the underlying types making the following triangle of truncated paths commute:
  \[ \begin{tikzcd}
      & \ar[dl, "p" swap] A \ar[dr, "q"] \\
      X \ar[rr, "\trunc{e}_0"] && Y \period
    \end{tikzcd} \]
  In other words, there is an equivalence
  $(X_p = Y_q) \simeq \pcomp{(X = Y)}{\bar{p} \ct q}$.  \qed
\end{lem}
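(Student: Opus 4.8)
The plan is to invoke the standard characterization of identity types in $\Sigma$-types and then simplify the resulting fibered path condition using the fact that the second component lives in a set. Recall that an $A$-band is a pair $(X, p)$ with $X : \Type$ and $p : \Trunc{A = X}_0$; since $\Trunc{A = X}_0$ is a $0$-truncated type, it is a set, and this is what makes the second component contribute only a proposition to the path type.

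First I would apply the general equivalence for paths in a $\Sigma$-type, namely
\[
  (X_p = Y_q) \ \simeq \ \Sigma_{e : X = Y} \big( \transp{e}(p) = q \big),
\]
where the transport is along $e$ in the family $Z \mapsto \Trunc{A = Z}_0$. The key observation is that transporting the truncated path $p : \Trunc{A = X}_0$ along $e : X = Y$ amounts to post-composing with $e$, i.e.\ $\transp{e}(p) = \trunc{?}_0$ paths get concatenated: concretely $\transp{e}(p)$ is $p$ followed by $\trunc{e}_0$. Since $\Trunc{A = Y}_0$ is a set, the fiber condition $\transp{e}(p) = q$ is a proposition, so the $\Sigma$-type above is a subtype of $X = Y$, picking out exactly those $e$ for which the truncated composite $p \ct \trunc{e}_0$ equals $q$. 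Rearranging, this condition is $\trunc{e}_0 = \bar p \ct q$ (multiplying on the left by $\bar p$), which is precisely the commuting triangle in the statement.

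The final step is to identify this subtype of $X = Y$ with the path component $\pcomp{(X = Y)}{\bar p \ct q}$. By definition, $\pcomp{(X = Y)}{\bar p \ct q} :\jeq \Sigma_{e : X = Y}(\trunc{e}_0 = \bar p \ct q)$, which matches exactly the characterization obtained above. Thus the two descriptions agree definitionally once the fiber condition has been massaged into the form $\trunc{e}_0 = \bar p \ct q$, and the commuting-triangle formulation is just the unrearranged version of the same equation.

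The main obstacle, such as it is, will be getting the bookkeeping of the truncated paths right: one must be careful that transport of $p : \Trunc{A = X}_0$ along $e : X = Y$ really does correspond to concatenating with $\trunc{e}_0$ on the right (as opposed to the left, or with an inverse), and that the reshuffling to $\trunc{e}_0 = \bar p \ct q$ uses the group-like structure of path concatenation correctly inside the set-truncation. Since everything takes place in the set $\Trunc{A = Y}_0$, these manipulations are forced and involve no higher coherence, so the proof is essentially a direct application of the $\Sigma$-path characterization together with the propositionality of the fiber; this is why the authors mark it with \texttt{\textbackslash qed} rather than supplying a separate argument.
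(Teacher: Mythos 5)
Your proposal is correct and matches the paper's intended argument exactly: the paper marks this lemma as immediate from the characterization of paths in $\Sigma$-types, and your proof fills in precisely that argument (the $\Sigma$-path characterization, the computation that transport in the family $Z \mapsto \Trunc{A = Z}_0$ along $e$ is right-composition with $\trunc{e}_0$, and the rearrangement of $p \ct \trunc{e}_0 = q$ to $\trunc{e}_0 = \bar{p} \ct q$ to match the definition of the path component). Your bookkeeping of the concatenation direction and the use of set-truncation are both right, so there is nothing to add.
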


For the remainder of this section, let \(A\) be a central type.
We begin by showing that the type of \(A\)-bands is a delooping of \(A\).

\begin{prop} \label{prop:loops-bautpl}
  We have that \(\Omega \BAutpl(A) \simeq_* A \).
\end{prop}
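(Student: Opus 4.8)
The plan is to compute $\Omega \BAutpl(A)$ directly from the definition of $\BAutpl(A)$ as a $\Sigma$-type, using the path characterization already established in \cref{lem:paths-in-bautpl}. Since $\BAutpl(A)$ is pointed at $A_{\trunc{\refl}_0}$, the loop space $\Omega \BAutpl(A)$ is by definition the identity type $A_{\trunc{\refl}_0} = A_{\trunc{\refl}_0}$ of bands. I would begin by applying \cref{lem:paths-in-bautpl} with $X = Y = A$ and $p = q = \trunc{\refl}_0$. This yields
\[ \Omega \BAutpl(A) \;\simeq\; \pcomp{(A = A)}{\bar{p} \ct q} \;\jeq\; \pcomp{(A = A)}{\trunc{\refl}_0}, \]
the path component of $\refl$ in the type of self-identifications $A = A$.

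Next I would transport along univalence. The type $A = A$ is equivalent to $A \simeq A$, and this equivalence identifies the component of $\refl$ with the component of the identity equivalence, so $\pcomp{(A = A)}{\refl} \simeq \pcomp{(A \simeq A)}{\id}$. By the definition of the center, $\pcomp{(A \simeq A)}{\id} \jeq ZA$ (recall the earlier remark that $\pcomp{(A \to A)}{\id}$ and $\pcomp{(A \simeq A)}{\id}$ are interchangeable since being an equivalence is a proposition). Finally, centrality of $A$ is exactly the statement that the evaluation map $\evid : ZA \to_* A$ is an equivalence, giving $ZA \simeq A$. Composing the chain of equivalences produces $\Omega \BAutpl(A) \simeq A$, as desired.

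I do not expect any serious obstacle here, since all the heavy lifting has been front-loaded into \cref{lem:paths-in-bautpl} and the definition of centrality. The only points requiring a little care are bookkeeping ones: confirming that $\bar{p} \ct q$ reduces to $\trunc{\refl}_0$ when $p = q = \trunc{\refl}_0$, and checking that the univalence-induced equivalence $(A = A) \simeq (A \simeq A)$ genuinely matches the base point $\refl$ with $\id$ so that the path components correspond. If one wanted the equivalence to be pointed, one would additionally verify that the composite sends the reflexivity loop to the base point of $A$; this is automatic since each equivalence in the chain is canonically pointed. The mild subtlety worth flagging is purely definitional—whether this $\Omega \BAutpl(A) \simeq A$ should later be upgraded to respect the H-space or delooping structure—but the bare statement of the proposition asks only for a plain equivalence, so the argument above suffices.
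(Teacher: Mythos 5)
Your proposal is correct and follows essentially the same route as the paper: apply \cref{lem:paths-in-bautpl} to identify $\Omega \BAutpl(A)$ with $\pcomp{(A = A)}{\refl}$, pass to $\pcomp{(A \simeq A)}{\id}$ via univalence, and conclude by centrality. The paper's proof is exactly this three-step chain of equivalences, just stated more tersely.
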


\begin{proof}
  We have \(\Omega \BAutpl(A) \simeq_* \pcomp{(A = A)}{\refl} \simeq_* \pcomp{(A \simeq A)}{\id} \simeq_* A \),
  where the first equivalence makes use of \cref{lem:paths-in-bautpl}
  and is easily seen to be pointed,
  the second equivalence is by univalence,
  and the third equivalence is by centrality.
\end{proof}

\begin{cor}\label{cor:deloopable}
  The unique H-space structure on \(A\) is deloopable. \qed
\end{cor}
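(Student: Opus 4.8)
The plan is to exhibit the unique H-space structure as one coming from a loop space, reducing everything to the uniqueness already established in \cref{prop:hspace-pointed-section-comp}. Here ``deloopable'' should mean that there is a pointed type $B$ together with a pointed equivalence $A \simeq \Omega B$ under which the H-space structure on $A$ corresponds to the canonical concatenation structure on $\Omega B$. Recall that any loop space $\Omega B$ carries such a canonical (coherent) H-space structure: the multiplication is concatenation of loops, the two unit laws are the standard left- and right-unit laws for path composition, and these agree on the reflexivity loop, which supplies the coherence $\mu_{lr}$ of \cref{defn:H-space}. In particular, taking $B :\jeq \BAutpl(A)$, the loop space $\Omega \BAutpl(A)$ is canonically an H-space.

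Next I would transport this structure along the pointed equivalence $A \simeq \Omega \BAutpl(A)$ supplied by \cref{prop:loops-bautpl}. Transporting an H-space structure along a pointed equivalence again yields an H-space structure, so this produces an element of $\HSpace{A}$ whose multiplication is, up to the equivalence, concatenation of loops in $\BAutpl(A)$. Since $A$ is central, \cref{prop:hspace-pointed-section-comp} tells us that $\HSpace{A}$ is contractible; hence this transported structure is necessarily identified with the unique H-space structure on $A$. This identification exhibits the unique H-space structure as a loop-space structure, witnessed by the delooping $\BAutpl(A)$, which is exactly what it means for it to be deloopable.

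I do not expect a genuine obstacle here, as the entire content is carried by the uniqueness of the H-space structure: it reduces the claim to producing \emph{any} delooping at all, and \cref{prop:loops-bautpl} already does that. The only point requiring a moment's care is checking that the concatenation structure on a loop space is coherent in the sense of \cref{defn:H-space}, but this is standard and amounts to the observation that the left and right unit laws for path composition agree on $\refl$.
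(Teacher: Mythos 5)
Your proposal is correct and is essentially the paper's own argument: the corollary is stated with an immediate \(\qed\) because it follows directly from \cref{prop:loops-bautpl} (which gives the pointed equivalence \(A \simeq \Omega\BAutpl(A)\)) together with the contractibility of \(\HSpace{A}\) from \cref{prop:hspace-pointed-section-comp}, which forces the transported loop-concatenation structure to coincide with the unique H-space structure. You have simply spelled out the details the paper leaves implicit, including the (correct) observation that loop concatenation is coherent in the sense of \cref{defn:H-space}.
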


Note that this gives an independent proof that it is associative (cf.\ \cref{prop:unique-associative}).

\pagebreak[2]

\begin{thm}\label{thm:uniquely-deloopable}
  The type $A$ has a unique delooping.
\end{thm}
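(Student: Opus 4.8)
The plan is to show that the type of deloopings of $A$ is contractible. I take a delooping of $A$ to be a pointed, connected type $B$ together with a pointed equivalence $e : A \simeq_* \Omega B$; by \cref{prop:loops-bautpl}, the type $\BAutpl(A)$ (with the equivalence constructed there) is such a delooping, so it suffices to prove that the type of deloopings is a proposition. I would first note that any delooping $B$ is automatically simply connected, since $\pi_1(B) = \pi_0(\Omega B) = \pi_0(A)$ is trivial as $A$ is connected. I would therefore fix an arbitrary delooping $(B,e)$ and aim to produce a pointed equivalence $B \simeq_* \BAutpl(A)$ compatible with the looping data; the uniqueness of such an identification will ultimately rest on centrality, since $A \pto A$ is a set (\cref{prop:central-tfae}) and hence the relevant mapping types are rigid.

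To build the comparison map, I would classify the family of based path spaces $b \mapsto (\pt = b) : B \to \Type$. Because $B$ is connected, this family is pointwise merely equal to $\Omega B$, so its classifying map factors through the component $\pcomp{\Type}{\Omega B} \simeq \BAut(\Omega B)$, and transporting along $e$ identifies this with $\BAut(A)$. Since $B$ is simply connected, the resulting pointed map $B \to \BAut(A)$ lifts uniquely through the $1$-connected cover to a pointed map $\beta : B \to \BAutpl(A)$. On loop spaces, $\Omega\beta$ is the right-translation map $p \mapsto (q \mapsto q \ct p)$ from $\Omega B$ into $\pcomp{(\Omega B \simeq \Omega B)}{\id}$; transported along $e$, this is exactly the centrality equivalence $\evid^{-1}$, using that $\Omega B$ is central because $A$ is and centrality transports along pointed equivalences. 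Hence $\Omega\beta$ is an equivalence.

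The hard part will be concluding that $\beta$ itself is an equivalence, and here I would deliberately avoid the naive slogan ``a pointed map of connected types inducing an equivalence on loop spaces is an equivalence.'' That slogan relies on Whitehead's theorem, which fails in a general $\infty$-topos—an $\infty$-connected fibre need not be contractible—so it is not available to us if we want the result to hold in every $\infty$-topos. Instead I would establish that $\beta$ is an equivalence by constructing an explicit inverse $\BAutpl(A) \to B$ sending a band $X_p$ to the point of $B$ that it names, powered by the fact that the based path fibration $\Sigma_{b:B}(\pt = b) \simeq 1$ is contractible. Concretely, this realizes the general principle that a pointed connected type is recovered from its loop space as the classifying type of torsors (equivalently, bands); it is this torsor description, rather than Whitehead, that keeps the argument valid in any $\infty$-topos. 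I expect the bulk of the remaining work to lie in packaging this correspondence and checking that the two round-trips are homotopic to the identities, which should be routine once the inverse is set up.

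Finally, with a pointed equivalence $\beta : B \simeq_* \BAutpl(A)$ compatible with the looping equivalences in hand, the type of deloopings is a proposition; being inhabited by $\BAutpl(A)$, it is contractible, which is precisely the claim that $A$ has a unique delooping.
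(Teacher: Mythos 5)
Your construction of the comparison map $\beta : B \to \BAutpl(A)$ (classifying the based path fibration and lifting through the $1$-connected cover) and the identification of $\Omega\beta$ with the centrality equivalence are essentially the paper's own proof. The gap is precisely at the step you flag as the hard part, and it stems from a misconception: the statement that a pointed map $f$ between pointed \emph{connected} types with $\Omega f$ an equivalence is itself an equivalence does \emph{not} rely on Whitehead's theorem, and it is valid in any $\infty$-topos. Whitehead's theorem concerns the truncated hypothesis that $f$ induces isomorphisms on all homotopy groups; what fails in a non-hypercomplete $\infty$-topos is that an $\infty$-connected type (all homotopy groups trivial) need not be contractible. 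Here the hypothesis is the untruncated statement that $\Omega f$ is an equivalence, which is strictly stronger. The proof is elementary: the loop space of the fibre $F$ of $f$ is the fibre of $\Omega f$, hence actually contractible (not merely $\infty$-connected); $F$ is connected, since $\Omega f$ is surjective on components and the domain is connected; and a pointed connected type with contractible loop space is contractible, because every identity type of $F$ is merely inhabited and merely equivalent to $\Omega F$, while contractibility is a proposition. Since the codomain is connected, all fibres of $f$ are then contractible, so $f$ is an equivalence. This is exactly how the paper concludes, with no appeal to hypercompleteness.

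Moreover, the detour you propose in its place is not actually a proof. ``Sending a band $X_p$ to the point of $B$ that it names'' is not well defined: a band is an abstract type $X$ together with $p : \Trunc{A = X}_0$, and nothing in this data names a point of $B$. Making it precise would mean showing that $\Sigma_{b:B}\,(\beta(b) = X_p)$ is contractible for every band, which is literally the statement that $\beta$ is an equivalence---so the ``explicit inverse'' presupposes what it is meant to prove, and the contractibility of $\Sigma_{b:B}\,(\pt = b)$ does not by itself produce it; the natural way to prove those fibres contractible is, again, the loop space lemma you rejected. A minor further point: no ``rigidity'' of $A \pto A$ is needed for contractibility of the type of deloopings; exhibiting, for each delooping $(B,e)$, a single identification with $(\BAutpl(A),\psi)$ already suffices.
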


\begin{proof}
  We must show that the type $\sum_{B} \, (\Omega B \simeq_* A)$ of deloopings of \(A\) is contractible, where \(B\) ranges over the universe of pointed, connected types.
  We will use $\BAutpl(A)$, with the equivalence $\psi$ from \cref{prop:loops-bautpl},
  as the center of contraction.
  (More precisely, we use a small type $BA$ which is equivalent to $\BAutpl(A)$,
  along with the naturally associated equivalence $\Omega(BA) \simeq_* A$,
  as the center.  See \cref{rmk:bautpl-locally-small}.  We will suppress this
  from the rest of the proof.)

  Let $B$ be a delooping of \(A\), i.e., a pointed, connected type with a pointed equivalence $\phi : \Omega B \simeq_* A$.
  Given $x : B$, consider $\pt =_B x$.
  Since $A$ is connected, $B$ is simply connected.
  Therefore, to give a banding on $\pt =_B x$, it suffices to do so when $x$ is $\pt$,
  in which case we use $\phi$.
  This defines a map $f : B \to \BAutpl(A)$.
  It is easy to show that the equivalence \(\phi : (\pt =_B \pt) \simeq A\) is an equivalence of bands, making \(f\) into a pointed map.

  We claim that the following triangle commutes:
  \[ \begin{tikzcd}
      \Omega B \ar[dr, "\phi"', "\sim"] \ar[rr, "\Omega f"] && \Omega \BAutpl(A) \ar[dl, "\psi", "\sim"'] \\
      & A \period
  \end{tikzcd} \]
  Let $q : \pt =_B \pt$.
  Then $(\Omega f)(q)$ is the path associated to the equivalence
  \[
    A \simeq (\pt =_B \pt) \simeq (\pt =_B \pt) \simeq A .
  \]
  The first equivalence is $\phi^{-1}$ and the last is $\phi$, as these give the
  pointedness of $f$.
  The middle equivalence is the map sending $p$ to $p \ct q$.
  The map $\psi$ comes from the evaluation fibration, so to compute
  $\psi((\Omega f)(q))$ we compute what happens to the base point of $A$.
  It gets sent to $\refl$, then $q$, and then $\phi(q)$.
  This shows that the triangle commutes.
  Since $A$ is an H-space, it follows from \cref{lem:unpointed-to-pointed-homotopy}
  that it is filled by a pointed homotopy.

  It follows that $\Omega f$ is an equivalence.  Since $B$ and $\BAutpl(A)$ are
  connected, $f$ is an equivalence as well.
  So $f$ and the commutativity of the triangle provide a path
  from $(B, \phi)$ to $(\BAutpl(A), \psi)$ in the type of deloopings.
\end{proof}

We conclude this section by showing how to deloop maps \(A \to_* A\).

\begin{defn}\label{defn:Bf}
  Given \(f : A \to_* A\), define \(Bf : \BAutpl(A) \to_* \BAutpl(A) \) by
  \[ Bf(X_p) :\jeq \pcomp{(X \to A)}{\dual{f} \circ \tilde{p}\inv} , \]
  where \(\dual{f} :\jeq f \circ \dualid\), and we have used that \(\dual{f} \circ \tilde{p}\inv\) is well-defined as an element of the set-truncation.
  To give a banding of \( \pcomp{(X \to A)}{\dual{f} \circ \tilde{p}\inv} \) we may induct on \(p\) and use \cref{prop:central-ev-f-equiv}.
  The same argument shows that $Bf$ is a pointed map.
\end{defn}

Note that \(f(\dual{a}) = \dual{f(a)}\) for any \(a : A\), since \(f\) is an H-space map by \cref{prop:unique-pmap}, so there's no choice involved in this definition.

Let \(g : \BAutpl(A) \pto \BAutpl(A)\).
Given a loop \(q : \pt = \pt\), the loop \((\Omega g)(q)\) is the composite
\[ \pt = g(\pt) = g(\pt) = \pt , \]
which uses pointedness of \(g\) and \(\ap_g(q)\).
We will identify \((\pt = \pt)\) with \(A\) and then write
\[ \Omega' g : A \simeq_* (\pt = \pt) \xrightarrow{\Omega g}_* (\pt = \pt) \simeq_* A. \]

\begin{prop} \label{prop:Omega-B-id}
 We have that \(\Omega' B f = f\) for any \(f : A \to_* A\).
\end{prop}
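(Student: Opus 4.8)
We must show that $\Omega' Bf = f$ for any pointed self-map $f : A \to_* A$ of a central type. Here $\Omega' Bf$ is the composite $A \simeq_* (\pt = \pt) \xrightarrow{\Omega Bf} (\pt = \pt) \simeq_* A$, where the identifications of $\Omega \BAutpl(A)$ with $A$ are the ones induced by the evaluation fibration and the description of paths between banded types from \cref{lem:paths-in-bautpl}. So what we really need to unwind is how $Bf$ acts on loops at the basepoint of $\BAutpl(A)$.

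**The plan.** The strategy is to trace an element $a : A$ through the whole composite and verify that it lands on $f(a)$. First, I would recall that under $A \simeq \Omega \BAutpl(A)$ (via \cref{prop:loops-bautpl}), an element $a : A$ corresponds to a loop $q_a : A_{\trunc{\refl}_0} = A_{\trunc{\refl}_0}$, which by \cref{lem:paths-in-bautpl} is given by an autoequivalence of $A$ in the component of the identity whose value at $\pt$ is $a$; concretely, by centrality and \cref{cor:ev-f-equation}, this is the equivalence $a \cdot (-) : A \simeq A$. Next, I would compute $\Omega Bf$ applied to $q_a$. Since $Bf(X_p) :\jeq \pcomp{(X \to A)}{f^* \circ \tilde p^{-1}}$, the action of $\ap_{Bf}$ on the loop $q_a$ is obtained by functorially transporting along $q_a$; because $Bf$ is defined on the underlying type by a "postcompose-with-$f^*$" pattern on each component, applying $\ap_{Bf}$ to the loop given by $a \cdot (-)$ yields the loop at $Bf(\pt)$ classified by the autoequivalence $f^*(a) \cdot (-)$—i.e., translating by $f^*(a) = f(a)$. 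Finally, running this back through the identification $\Omega \BAutpl(A) \simeq A$ (evaluate the classifying equivalence at $\pt$) returns $f(a) \cdot \pt = f(a)$, as desired.

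**Executing the bookkeeping.** The cleanest way to organize this is to induct on the band $p$ using \cref{prop:central-ev-f-equiv}, reducing to the pointed case $X \jeq A$ with $p \jeq \trunc{\refl}_0$, and then to use \cref{cor:ev-f-equation} to give an explicit formula $g(a) = g(\pt) \cdot f(a)$ for any $g$ in the relevant component. This lets me replace abstract transport-along-a-loop computations with honest multiplications in $A$. Concretely, I would identify the loop $q_a$ in $\Omega \BAutpl(A)$ with the equivalence $a \cdot (-)$, then show that $\ap_{Bf}$ sends the band-path induced by $a \cdot (-)$ to the band-path induced by the equivalence classified by $f^* \circ (a \cdot -) \circ \id^*$, whose underlying self-map of $A$, evaluated at the basepoint, is $f(a)$. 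The remark immediately preceding the statement—that $f(a^*) = f(a)^*$ because $f$ is an H-space map (\cref{prop:unique-pmap})—is what guarantees this is all well-defined and compatible with the duals appearing in the definition of $Bf$.

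**The main obstacle.** The conceptual content is short, but the genuine difficulty is the coherence bookkeeping: correctly matching the two a priori different identifications $\Omega \BAutpl(A) \simeq A$ (one from \cref{prop:loops-bautpl} via the evaluation fibration, one implicit in the definition of $\Omega'$) and tracking how $\ap_{Bf}$ interacts with the set-truncated bands and with the $\tilde p^{-1}$ and $\id^*$ appearing in $Bf$. The risk is an off-by-an-inversion error, since $Bf$ involves the dual via $\id^*$ and $f^*$. I expect this to be manageable precisely because everything reduces, after inducting on $p$ and invoking \cref{cor:ev-f-equation}, to the identity $f^*(a) \cdot \pt = f(a)$ in $A$, with the involution facts ($\pt^* = \pt$, $a^{**} = a$) recorded after \cref{defn:inversion-map} absorbing the inversions cleanly.
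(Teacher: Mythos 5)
Your overall strategy coincides with the paper's: identify a loop at $\pt$ in $\BAutpl(A)$ with a translation $a \cdot (-)$ using centrality and \cref{cor:ev-f-equation}, compute $\ap_{Bf}$ on it after inducting on bands, and read off the answer by evaluating at the basepoint via \cref{prop:central-ev-f-equiv}. However, the key computation---which is the entire content of this proposition---contains exactly the off-by-an-inversion error you yourself flagged as the main risk, and it is then hidden by a false identity rather than resolved. Transport in the family $X \mapsto (X \to A)$ along a path of types coming from an equivalence $e$ is precomposition with $e\inv$, not with $e$; this is why the paper's diagram carries the vertical map $g \mapsto g \circ \tilde{p}\inv$. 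So $\ap_{Bf}$ applied to the loop classified by $a \cdot (-)$ precomposes with $a \linv (-)$, and conjugating by the banding $b \mapsto b \cdot f^*(-)$ of $Bf(\pt)$ yields the equivalence $b \mapsto b \cdot f^*(a \linv \pt) = b \cdot f^*(\dual{a}) = b \cdot f(a^{**}) = b \cdot f(a)$. The inversion coming from the transport cancels the inversion hidden inside $f^* \jeq f \circ \id^*$, and evaluating at $\pt$ gives $f(a)$.

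Your computation instead precomposes with $a \cdot (-)$ itself, producing a loop classified by translation by $f^*(a)$, and you then assert $f^*(a) = f(a)$. That identity is false: $f^*(a) \jeq f(\dual{a}) = \dual{f(a)}$ (the second equality by \cref{prop:unique-pmap}), which differs from $f(a)$ unless $f(a)$ is its own inverse. The same error recurs in your formula $f^* \circ (a \cdot -) \circ \id^*$: evaluated at the basepoint it gives $f^*(a \cdot \dual{\pt}) = f^*(a) = \dual{f(a)}$, not $f(a)$ as claimed. Taken at face value, your computation would prove $\Omega' Bf = \dualid \circ f$, contradicting the proposition itself (take $A \jeq \EM{\Zb}{1}$ and $f \jeq \id$: the inversion map of the circle is not the identity). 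So the missing ingredient is precisely the cancellation of the two inversions, which only occurs with the correct transport direction. A smaller omission: the elementwise computation gives an unpointed homotopy $\Omega' Bf = f$, and the paper invokes \cref{lem:unpointed-to-pointed-homotopy} to upgrade this to an equality of pointed maps; your outline does not address this step.
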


\begin{proof}
  The following diagram describes how $Bf$ acts on a loop \(p : \pt =_{\BAutpl(A)} \pt \):
  \[ \begin{tikzcd}
      A_{\refl} \dar["p", equals] && \pcomp{(A \to A)}{\dual{f}} \dar[equals, "{g \mapsto g \circ \tilde{p}\inv}"] & \lar["\sim"] A \\
      A_{\refl} && \pcomp{(A \to A)}{\dual{f}} \rar["\sim"] & A
    \end{tikzcd} \]
  Since $\tilde{p}$ is in the component of the identity,
  \cref{cor:ev-f-equation} tells us that
  $\tilde{p}(a) = x \cdot a$ for all $a : A$, where $x = \tilde{p}(\pt)$.
  So $\tilde{p}\inv(a) = x \linv a$.
  Then the composite \(A \simeq A\) on the right is seen to be
  \[ a \mapsto \ev_{\dual{f}} \bigg( \big( a \cdot \dual{f}(-) \big) \circ \tilde{p}\inv \bigg)
    = \ev_{\dual{f}} \bigg( a \cdot \dual{f} \big(x \linv (-) \big) \bigg)
    = a \cdot f(x^{**})
    =  a \cdot f(x). \]
  The domain $A_{\refl} = A_{\refl}$ is identified with $A$ by sending a path $p$
  to $\tilde{p}(\pt)$, which in this case is the $x$ above.
  The codomain $\pcomp{(A \simeq A)}{\id}$ is identified with $A$ using $\evid$,
  which sends the displayed function to $\pt \cdot f(x)$, which equals $f(x)$.
  So we have that \(\Omega Bf = f\).
  By \cref{lem:unpointed-to-pointed-homotopy}, they are equal as pointed maps.
\end{proof}

\begin{prop} \label{prop:B-Omega-id}
 We have that \(B \Omega' g = g\) for any \(g : \BAutpl(A) \to_* \BAutpl(A)\).
\end{prop}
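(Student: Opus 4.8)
The plan is to prove that $B\Omega' g = g$ for every pointed endomap $g$ of $\BAutpl(A)$, which together with \cref{prop:Omega-B-id} shows that $f \mapsto Bf$ and $g \mapsto \Omega' g$ are mutually inverse, hence that delooping of self-maps is unique. Since we have just established $\Omega' B f = f$, and since we know from the remark after \cref{thm:Omega-equiv} that the type of pointed self-maps $\BAutpl(A) \pto \BAutpl(A)$ is equivalent to $A \pto A$ via $\Omega'$ (which is itself a consequence of $\Omega\BAutpl(A) \simeq A$ together with centrality), the cleanest route is to leverage this equivalence rather than to unwind the definition of $B(\Omega' g)$ directly.

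**Main approach.** First I would observe that $\Omega'$ is a map $(\BAutpl(A) \pto \BAutpl(A)) \to (A \pto A)$, and I want to show it is injective (indeed an equivalence). The key input is that $\BAutpl(A)$ is itself central — this is exactly \cref{cor:central-infinite-loop-space}, but since that corollary is proved using the H-space structure from \cref{ss:tensoring-bands}, I must be careful about circularity; instead I would argue directly that $\BAutpl(A)$ is a connected type whose pointed self-maps are determined by their action on loops. Concretely, since $\BAutpl(A)$ is connected and $\Omega\BAutpl(A) \simeq A$ with $A$ central (so $A \pto A$ is a set), a pointed self-map of $\BAutpl(A)$ is determined up to pointed homotopy by its induced map on $\Omega\BAutpl(A)$, i.e.\ by $\Omega' g$. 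This is because $\BAutpl(A)$ is a $1$-type-like delooping: the space of pointed maps out of a connected delooping into another is controlled by the map on loop spaces when the target loop space is a set.

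**Carrying it out.** So the concrete steps are: (i) show that the map $\Omega'$ sending $g$ to its induced loop-space map is injective on pointed homotopy classes, by using that two pointed self-maps of $\BAutpl(A)$ agreeing on $\Omega\BAutpl(A)$ must be equal — this uses connectedness of $\BAutpl(A)$ together with the fact that the target's relevant mapping space is a set, so there are no higher obstructions; (ii) apply \cref{prop:Omega-B-id} to compute $\Omega'(B\Omega' g) = \Omega' g$; (iii) conclude by injectivity of $\Omega'$ that $B\Omega' g = g$. Formally, I would phrase (i) as: the map $g \mapsto \Omega' g$ is an equivalence because it is a section-retraction pair with $f \mapsto Bf$, but to avoid assuming what we are proving, the honest argument is that $\Omega'$ is injective since a pointed map from a connected type $\BAutpl(A)$ is determined by its restriction to loops precisely when the codomain has the appropriate truncation level on its automorphisms, which centrality of $A$ supplies.

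**Main obstacle.** The hard part will be establishing the injectivity of $\Omega'$ without invoking the centrality of $\BAutpl(A)$ (which logically comes later, in \cref{cor:central-infinite-loop-space}). The clean way around this is to prove that pointed self-maps of $\BAutpl(A)$ form a set and that $\Omega'$ embeds this set into the set $A \pto A$; one shows $\Omega'$ is injective by a direct induction argument. Given a pointed $g$, a self-map of $\BAutpl(A)$ is specified by its values on bands $X_p$, and since every band is connected to the basepoint, the map is determined by the identity-component data, which is exactly what $\Omega' g$ records. Combined with \cref{prop:Omega-B-id}, giving $\Omega'(B\Omega' g) = \Omega' g$, injectivity forces $B\Omega' g = g$. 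I would expect the delicate point to be making precise that "determined by action on loops" holds at the level of pointed homotopy, which is where one must carefully invoke that $A \pto A$ is a set (from \cref{prop:central-tfae}) so that no $2$-dimensional coherence data obstructs the identification.
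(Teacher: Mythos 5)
Your reduction is logically fine as far as it goes: since \cref{prop:Omega-B-id} gives $\Omega' \circ B = \id$, injectivity of $\Omega'$ would indeed force $B \Omega' g = g$. But the injectivity of $\Omega'$ is exactly where your argument has a genuine gap, and it is not a minor one: establishing it is essentially equivalent to the proposition you are trying to prove. The general principle you appeal to --- that a pointed map out of a connected type is determined by its action on loops, provided some mapping type downstairs is a set --- is false. The loop functor is not faithful on pointed connected types: in the $\infty$-topos of spaces (where all results of this paper must hold) a nonzero phantom map $f : \Cb P^\infty \to \Sb^3$ satisfies $\Omega f = 0$, because pointed maps $\Sb^1 \pto \Omega\Sb^3$ are classified up to homotopy by $\pi_2(\Sb^3) = 0$, yet $f$ is not null. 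So no soft argument from connectedness plus a set-ness hypothesis can prove your step (i); some specific structure of $\BAutpl(A)$ must be used, and the set-ness of $A \pto A$ is not by itself that structure.

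To see concretely where your sketch breaks down: given $g, h$ with $\Omega' g = \Omega' h$, producing $g = h$ means producing a section of the family $X_p \mapsto \left( g(X_p) = h(X_p) \right)$ over $\BAutpl(A)$, together with a basepoint coherence. By \cref{prop:path-type-banded} each fibre of this family is an $A$-band, hence equivalent to $A$ after trivialization --- in particular not a set --- so \cref{lem:forall-BAut1-set}, the paper's engine for ``determined by basepoint data'' arguments, does not apply, and connectedness (even $1$-connectedness) of $\BAutpl(A)$ is nowhere near enough to trivialize sections of a family with untruncated fibres. Your sentence ``every band is connected to the basepoint, so the map is determined by the identity-component data'' is precisely the assertion that these section obstructions vanish, which is the thing requiring proof. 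The paper does this work directly rather than via injectivity: for each band $X_p$ and each $y : g(X_p)$ it constructs an explicit map $G_y : X \to A$ using $\ap_g$ and the trivialization of $g(X_p)$ at $y$, checks by induction on $p$ that $G_y$ lies in the component of $\dual{(\Omega' g)} \circ \tilde{p}^{-1}$ (legitimate, since that goal is a proposition), and then verifies that $G$ is an equivalence of bands, thereby exhibiting $g(X_p) = B(\Omega' g)(X_p)$ by hand. Faithfulness of $\Omega'$ is a corollary of \cref{thm:Omega-equiv}, not an available input to it.
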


\begin{proof}
  Given an \(A\)-band \(X_p\), we need to show that
  \( g(X_p) = \pcomp{(X \to A)}{\dual{(\Omega' g)} \circ \tilde{p}\inv}. \)
  First we construct a map of the underlying types from left to right.
  For \(y : g(X_p)\), define the map
  \[ G_y : X \xrightarrow{\sim} (\pt = X_p) \simeq (X_p = \pt) \xrightarrow{\ap_g} (g(X_p) = g(\pt)) \simeq (\pt = \pt) \to A , \]
  where the second map is path inversion, and the fourth map uses the trivialization of \(g(X_p)\) associated to \(y\) and pointedness of \(g\).
  The identification \(\pt = g(\pt)\) corresponds to a unique point \(y_0 : g(\pt)\).
  To check that \(G_y\) lies in the right component, we may induct on \(p\) and assume \(y \jeq y_0\), since \(g(\pt)\) is connected.
  We then get the map
  \[ G_{y_0} : A \xrightarrow{\dualid} A \simeq (\pt = \pt) \xrightarrow{\Omega g} (\pt = \pt) \to A, \]
  since path inversion on \((\pt = \pt)\) corresponds to inversion on \(A\), and \(y_0\) corresponds to the pointing of \(g\).
  This map is precisely the definition of \(\dual{(\Omega' g)}\), so \(G\) lands in the desired component.

  To check that \(G\) defines an equivalence of bands we may again induct on \(p\).
  Write \(\widetilde{y_0} : \pt \simeq g(\pt)\) for the equivalence associated to the point \(y_0 : g(\pt)\), which is a lift of the (equivalence associated to the) banding of \(g(\pt)\).
  It then suffices to check that the diagram
  \[ \begin{tikzcd}
      g(\pt) \ar[dr, "\widetilde{y_0}\inv" swap] \ar[rr, "G"] && \pcomp{(A \to A)}{\dual{(\Omega' g)}} \ar[dl, "{\ev_{\dual{(\Omega' g)}}}"] \\
      & \pt
    \end{tikzcd} \]
  commutes.
  Let \(y : g(\pt)\), which we identify with a trivialization \(y' : \pt = g(\pt)\).
  Chasing through the definition of \(G\) and using that \(\ap_g(\refl) = \refl\), we see that
  \[ G_y(\pt) = \ev(y' \ct \overline{y_0}) = \widetilde{y_0}\inv (y'(\pt)) \jeq \widetilde{y_0}\inv (y) , \]
  where \(\ev : (\pt = \pt) \to A\) is the last map in the definition of \(G_y\), which transports \(\pt\) along a path.
  Thus we see that the triangle above commutes, whence \(G\) is an equivalence of bands, as required.
\end{proof}

\begin{thm}\label{thm:Omega-equiv}
We have inverse equivalences
\[
  \Omega' : (\BAutpl(A) \pto \BAutpl(A)) \simeq (A \pto A) : B .
\]
In particular, the type $\BAutpl(A) \pto \BAutpl(A)$ is a set.
\end{thm}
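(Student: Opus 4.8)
The plan is to assemble this theorem almost entirely from the two preceding propositions. The goal is to show that $\Omega'$ and $B$ are mutually inverse equivalences between $\BAutpl(A) \pto \BAutpl(A)$ and $A \pto A$. We already have the two round-trip identities in hand: \cref{prop:Omega-B-id} establishes $\Omega' \circ B = \id$ (as $\Omega' Bf = f$ for all $f$), and \cref{prop:B-Omega-id} establishes $B \circ \Omega' = \id$ (as $B\Omega' g = g$ for all $g$). First I would note that together these exhibit $\Omega'$ and $B$ as quasi-inverses, hence both are equivalences. This is the entire content of the first claim, and it requires no new computation.

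For the ``in particular'' clause, the strategy is to transport a known fact across the equivalence $\Omega'$. Since $A$ is central, \cref{prop:central-tfae} (specifically the equivalence of (1) and (2)) tells us that $A \pto A$ is a set. The equivalence $\Omega' : (\BAutpl(A) \pto \BAutpl(A)) \simeq (A \pto A)$ just established then shows that $\BAutpl(A) \pto \BAutpl(A)$ is also a set, since being a set is invariant under equivalence. This follows immediately because equivalent types have the same truncation level.

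I expect no serious obstacle here: the theorem is a formal consequence of results proved just above it, and the work has effectively already been done in \cref{prop:Omega-B-id,prop:B-Omega-id}. The only point requiring mild care is confirming that the two one-sided identities genuinely combine into a quasi-inverse pair in the pointed-map setting—that is, that $\Omega'Bf =_* f$ and $B\Omega' g =_* g$ hold as identifications (pointed homotopies) rather than merely unpointed ones, so that the standard characterization of equivalences via quasi-inverses applies. Granting that the preceding propositions deliver these identifications at the level of $=$ in the respective pointed mapping types, the assembly is routine.

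\begin{proof}
  By \cref{prop:Omega-B-id}, $\Omega' B f = f$ for every $f : A \pto A$, and by
  \cref{prop:B-Omega-id}, $B \Omega' g = g$ for every $g : \BAutpl(A) \pto \BAutpl(A)$.
  Thus $\Omega'$ and $B$ are mutually inverse, so both are equivalences.
  Finally, since $A$ is central, $A \pto A$ is a set by \cref{prop:central-tfae}.
  As $\BAutpl(A) \pto \BAutpl(A)$ is equivalent to $A \pto A$, it is a set as well.
\end{proof}
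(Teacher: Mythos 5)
Your proof is correct and matches the paper's own, which simply cites \cref{prop:Omega-B-id,prop:B-Omega-id}; you additionally spell out the (implicit in the paper) derivation of the ``in particular'' clause from \cref{prop:central-tfae} and invariance of truncation level under equivalence, which is exactly the intended reasoning.
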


\begin{proof}
Combine \cref{prop:Omega-B-id,prop:B-Omega-id}.
\end{proof}

\subsection{Tensoring bands}
\label{ss:tensoring-bands}

In this section, we will construct an H-space structure on $\BAutpl(A)$,
where we continue to assume that $A$ is a central type.
This H-space structure is interesting in its own right,
and also implies that $\BAutpl(A)$ is itself central.
It that follows that $A$ is an infinite loop space.
The tensor product of banded types is analogous to the classical tensor product of torsors over an abelian group.
An account of the latter is given in \cref{ss:G-torsors}.

This elementary lemma will come up frequently.

\begin{lem}\label{lem:forall-BAut1-set}
Let $P : \BAutpl(A) \to \Type$ be a set-valued type family.
Then $\prod_{X_p} P(X_p)$ is equivalent to $P(\pt)$.
\end{lem}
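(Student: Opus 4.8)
The plan is to reduce the statement to an application of the fact that $\BAutpl(A)$ is connected together with the universal property of the truncation implicit in the component construction. Recall that $\BAutpl(A) :\jeq \Sigma_{X : \Type} \Trunc{A = X}_0$ is connected, meaning $\Trunc{\BAutpl(A)}_0$ is contractible, pointed at the base band. The key observation is that for a connected type $T$ with base point $t_0$ and a set-valued family $P : T \to \Type$, evaluation at $t_0$ gives an equivalence $\prod_{t : T} P(t) \simeq P(t_0)$.

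First I would establish this general principle. The map from left to right is evaluation at the base point $(\lambda s.\, s(t_0))$. To produce an inverse, I would take $u : P(t_0)$ and attempt to extend it to a section of $P$. Since each $P(t)$ is a set and $T$ is connected, the family $P$ factors (up to the relevant identifications) through $\Trunc{T}_0$, which is contractible; concretely, for any $t : T$ the proposition ``$P$ is determined at $t$ by its value at $t_0$'' follows because the type of extensions is itself valued in sets and connectedness lets us transport $u$ along the merely-existing path $t_0 = t$. The cleanest route is to note that giving a section of a set-valued family over a connected type is equivalent to giving a point of the fibre over the base point, since the type of sections $\prod_{t} P(t)$ is then a set and the two round-trip composites are forced to be identities by the set-level coherence.

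Concretely, for the paper's situation I would instantiate $T :\jeq \BAutpl(A)$, $t_0 :\jeq \pt$, and invoke connectedness of $\BAutpl(A)$ as recorded in \cref{rmk:bautpl-locally-small}. The equivalence $\prod_{X_p} P(X_p) \simeq P(\pt)$ then follows directly. I expect the main obstacle to be handling the extension step carefully: one wants to extend a single value $u : P(\pt)$ to all banded types, and this requires transporting along paths that only merely exist. The resolution is exactly that $P$ is set-valued, so the type of such extensions is a proposition (being a dependent product of propositions, once the value at $\pt$ is fixed), and connectedness supplies the needed mere path; thus the extension exists and is unique. This is the point where the hypothesis that $P$ is set-valued (rather than merely valued in arbitrary types) is essential.
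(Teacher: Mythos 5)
Your proposed general principle is false as stated: connectedness of $T$ together with $P$ being set-valued does \emph{not} make evaluation at the base point an equivalence. Concretely, take $T :\jeq \EM{\Zb/2}{1}$ and $P(t) :\jeq (\pt =_T t)$. Each $P(t)$ is a set (as $T$ is a $1$-type) and $T$ is connected, yet $\prod_{t : T} P(t)$ is empty — an inhabitant would be a contraction of $T$, contradicting $\Omega T \simeq \Zb/2$ — while $P(\pt) \simeq \Zb/2$ has two elements. The flaw is exactly in your extension step. Your argument does show that evaluation is an embedding: for two sections agreeing at $\pt$, the family $t \mapsto (s(t) = s'(t))$ is proposition-valued, and proposition-valued families over a connected type that hold at the base point hold everywhere. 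But surjectivity fails: transporting $u : P(\pt)$ along a \emph{merely} existing path $\pt = t$ only produces a merely specified element of $P(t)$; to extract an actual element of the set $P(t)$ you need the transport to be independent of the chosen path, i.e., the action of $\pi_1(T)$ on $P(\pt)$ must be trivial. Relatedly, your claim that a set-valued family factors through $\Trunc{T}_0$ is wrong in general: set-valued families factor through $\Trunc{T}_1$; only proposition-valued ones factor through $\Trunc{T}_0$.

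The hypothesis you actually need is that $T$ is \emph{simply} connected: then the base-point inclusion $1 \to T$ is $0$-connected, and the induction principle for connected maps applied to families of $0$-types gives that evaluation at $\pt$ is an equivalence. This does hold here, without even using centrality of $A$: by \cref{lem:paths-in-bautpl}, $\Omega \BAutpl(A) \simeq \pcomp{(A = A)}{\refl}$ is a path component, hence connected, so $\BAutpl(A)$ is $1$-connected (combine with connectedness from \cref{rmk:bautpl-locally-small}). Patched this way, your argument is correct. The paper's own proof sidesteps the connectivity machinery entirely by using the definition $\BAutpl(A) \jeq \Sigma_{X : \Type} \Trunc{A = X}_0$: currying identifies the type of sections with $\prod_{X : \Type} \prod_{p : \Trunc{A = X}_0} P(X_p)$; since $P$ is set-valued, the dependent universal property of set-truncation replaces $\Trunc{A = X}_0$ by $A = X$; and based path induction then contracts the whole thing to $P(A_{\trunc{\refl}_0}) \jeq P(\pt)$. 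There the set-truncatedness of the band plays precisely the role that simple connectedness plays in the corrected version of your argument.
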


\begin{proof}
Since each $P(X_p)$ is a set, $\prod_{X_p} P(X_p)$ is equivalent to
$\prod_{X:\Type} \prod_{p : A = X} P(X_{\trunc{p}_0})$.
By path induction, this is equivalent to $P(A_{\trunc{\refl}_0})$, i.e., $P(\pt)$.
\end{proof}

A consequence of the following result is that any pointed \(A\)-band is trivial.

\begin{prop}\label{prop:pointed-A-band-trivial}
Let $X_p$ be an $A$-band.
Then there is an equivalence $(\pt =_{{\BAutpl(A)}} X_p) \to X$.
\end{prop}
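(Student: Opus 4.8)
The goal is to produce an equivalence $(\pt =_{\BAutpl(A)} X_p) \to X$ for any $A$-band $X_p$. The plan is to reduce to the pointed case and then invoke centrality together with the path description from \cref{lem:paths-in-bautpl}. Observe that the family $X_p \mapsto ((\pt =_{\BAutpl(A)} X_p) \to X)$ is \emph{not} obviously set-valued, so \cref{lem:forall-BAut1-set} does not apply directly to the whole statement; instead, I would construct the equivalence by first building an underlying map of types and then checking it is an equivalence, where the latter \emph{is} a proposition and can be proved after inducting on $p$.

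First I would unfold the loop space using \cref{lem:paths-in-bautpl}: a path $\pt =_{\BAutpl(A)} X_p$ corresponds to an element of $\pcomp{(A = X)}{\bar{\trunc{\refl}_0} \ct p}$, i.e., a path $e : A = X$ whose set-truncation equals $p$ (since the basepoint band is $\trunc{\refl}_0$). Thus $(\pt =_{\BAutpl(A)} X_p)$ is equivalent to $\pcomp{(A = X)}{p}$. Composing with univalence, this is $\pcomp{(A \simeq X)}{\tilde{p}\inv}$, the component of equivalences $A \simeq X$ lying in the band's class. Evaluating such an equivalence at $\pt : A$ gives a point of $X$, so I get a natural candidate map $\ev : \pcomp{(A \simeq X)}{\tilde{p}\inv} \to X$, which is exactly an evaluation fibration in disguise.

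Now I would prove this evaluation map is an equivalence. Being an equivalence is a proposition, so by \cref{lem:forall-BAut1-set} (or simply by path induction on the band $p$, since $\Trunc{A=X}_0$ reduces the claim to the point $A_{\trunc{\refl}_0}$) I may assume $X \jeq A$ and $p \jeq \trunc{\refl}_0$, whence $\tilde{p}\inv$ becomes $\id$. The map then becomes $\evid : \pcomp{(A \simeq A)}{\id} \to A$, which is an equivalence precisely by the centrality of $A$. This is the crux of the argument, and it is where the hypothesis that $A$ is central is used in an essential way.

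The main obstacle is bookkeeping rather than conceptual: I must be careful that the reduction to the propositional statement ``$\ev$ is an equivalence'' is legitimate, since the map $\ev$ itself is defined before the induction and it is only its equivalence-hood that I prove propositionally. Concretely, I would define the map $(\pt = X_p) \to X$ uniformly (via the chain through \cref{lem:paths-in-bautpl}, univalence, and evaluation at $\pt$), and then separately establish that it is an equivalence by inducting on $p$ and appealing to centrality. One should double-check that the truncation $\tilde p : \Trunc{X \simeq A}_0$ interacts correctly with evaluation—but since we only need a \emph{property} to transport, the set-truncation causes no difficulty. I expect the whole proof to be short once the identification $(\pt = X_p) \simeq \pcomp{(A \simeq X)}{\tilde{p}\inv}$ is in place.
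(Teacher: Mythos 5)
Your proof is correct and follows essentially the same route as the paper: identify $(\pt =_{\BAutpl(A)} X_p)$ with a component of $A \simeq X$ via \cref{lem:paths-in-bautpl}, define the evaluation map uniformly, and then reduce the propositional claim that it is an equivalence to the basepoint case (via \cref{lem:forall-BAut1-set}), where centrality finishes the argument. Your explicit care about applying the reduction only to the \emph{property} of being an equivalence, rather than to the whole statement, is exactly the subtlety the paper's proof also relies on.
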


\begin{proof}
By \cref{lem:paths-in-bautpl}, there is an equivalence
$(\pt =_{{\BAutpl(A)}} X_p) \simeq \pcomp{(A \simeq X)}{\tilde{p}}$.
We will show that $\ev_{p} : \pcomp{(A \simeq X)}{\tilde{p}} \to X$ is an equivalence.
By \cref{lem:forall-BAut1-set}, it's enough to prove this when $X_p \jeq \pt$,
and this holds because $A$ is central.
\end{proof}

We now show that path types between \(A\)-bands are themselves banded.  This underlies the main results of this section.

\begin{prop}\label{prop:path-type-banded}
  Let \(X_p\) and \(Y_q\) be \(A\)-bands.
  The type \(X_p =_{\BAutpl(A)} Y_q\) is banded by \(A\).
\end{prop}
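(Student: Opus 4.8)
We must band the path type $X_p =_{\BAutpl(A)} Y_q$ by $A$, i.e.\ produce an element of $\Trunc{A = (X_p = Y_q)}_0$, or equivalently a point of $\Trunc{(X_p = Y_q) \simeq A}_0$. The plan is to give, for each pair of $A$-bands, a canonical (set-truncated) equivalence between the path type and $A$ itself, in a way that is manifestly well-defined on the truncation.

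\begin{proof}
  By \cref{lem:paths-in-bautpl}, the type $X_p =_{\BAutpl(A)} Y_q$ is equivalent to
  $\pcomp{(X = Y)}{\bar{p} \ct q}$, and hence to $\pcomp{(X \simeq Y)}{\widetilde{\bar p \ct q}}$
  since being an equivalence is a property.
  To band this type by $A$, it suffices to produce an element of
  $\Trunc{(X_p = Y_q) \simeq A}_0$, and since set-truncation of a product of two
  truncated things is determined by choosing lifts, we may reason as follows.
  The assignment $(X_p, Y_q) \mapsto \Trunc{(X_p = Y_q) \simeq A}_0$ is set-valued,
  so by \cref{lem:forall-BAut1-set} (applied twice, in both variables) it suffices to
  exhibit the required band in the single case $X_p \jeq \pt$ and $Y_q \jeq \pt$.

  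In that case the path type is $\Omega \BAutpl(A)$, which is equivalent to $A$ by
  \cref{prop:loops-bautpl}.  This provides the desired element of
  $\Trunc{(\pt = \pt) \simeq A}_0$, and therefore a band on $\pt =_{\BAutpl(A)} \pt$.
  Transporting back along the two applications of \cref{lem:forall-BAut1-set} produces,
  for arbitrary $X_p$ and $Y_q$, a band
  \[
    \Trunc{A = (X_p =_{\BAutpl(A)} Y_q)}_0 ,
  \]
  as required.
\end{proof}

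The main subtlety here is not the construction in the degenerate case---that is just
\cref{prop:loops-bautpl}---but rather the bookkeeping that lets us reduce to it. The
point is that a band is an element of a \emph{set} (a $0$-truncated type), so the type
family $(X_p, Y_q) \mapsto \Trunc{A = (X_p = Y_q)}_0$ is set-valued, and
\cref{lem:forall-BAut1-set} converts a section over all of $\BAutpl(A)$ into a single
value over the basepoint. I would expect the one place requiring care is confirming
that \cref{lem:forall-BAut1-set} applies cleanly in two variables; since the family is
set-valued in each argument separately, one applies the lemma once to reduce $X_p$ to
$\pt$ and once more to reduce $Y_q$ to $\pt$, with no coherence obstruction because
everything lives in a set.
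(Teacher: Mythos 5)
Your proof is correct and takes essentially the same approach as the paper's: the paper reduces to the case $X_p \jeq \pt \jeq Y_q$ by inducting on the truncated bands $p$ and $q$ (which is exactly what \cref{lem:forall-BAut1-set} packages), and then bands $\pt = \pt$ using centrality, which is the content of \cref{prop:loops-bautpl} that you cite. The only difference is bookkeeping: you invoke the packaged lemmas where the paper performs the set-truncation induction and the identification $(\pt = \pt) \simeq \pcomp{(A \simeq A)}{\id} \simeq A$ inline.
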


\begin{proof}
  We need to construct a band \( \Trunc{ A = (X_p = Y_q) }_0 \).
Since the goal is a set, we may induct on \(p\) and \(q\), thus reducing the goal to \( \Trunc{ A = (\pt =_{\BAutpl(A)} \pt ) }_0 \).
Using that \((\pt =_{\BAutpl(A)} \pt) \simeq \pcomp{(A \simeq A)}{\id}\) and that \(A\) is central, we may give the set truncation of the inverse of the evaluation fibration at \(\id_A\).
\end{proof}

The following is an immediate corollary of \cref{prop:pointed-A-band-trivial}.

\begin{cor}\label{cor:X=X-trivial}
  For any \(A\)-band \(X_p\), the \(A\)-band \((X_p = X_p)\) is trivial. \qed
\end{cor}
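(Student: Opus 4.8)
The plan is to exhibit the $A$-band $(X_p = X_p)$ as a \emph{pointed} $A$-band and then invoke the consequence, recorded just before \cref{prop:pointed-A-band-trivial}, that every pointed $A$-band is trivial (i.e., equal to the basepoint $\pt$ of $\BAutpl(A)$). Concretely, I want to produce a path $\pt = (X_p = X_p)$ in $\BAutpl(A)$.

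First I would recall that the $A$-band in question is the one supplied by \cref{prop:path-type-banded}: its underlying type is the path type $X_p =_{\BAutpl(A)} X_p$, equipped with the canonical band constructed there. The key observation is that this underlying type is not merely an arbitrary type banded by $A$ --- it has a distinguished element, namely $\refl_{X_p}$. Thus $(X_p = X_p)$ is a \emph{pointed} $A$-band in the relevant sense: we have a point of its underlying type.

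Next I would feed this into \cref{prop:pointed-A-band-trivial}, applied to the $A$-band $Z :\jeq (X_p = X_p)$. That proposition gives an equivalence $(\pt =_{\BAutpl(A)} Z) \simeq Z$ between the type of trivializations of $Z$ and its underlying type. Transporting the point $\refl_{X_p} : Z$ backwards along this equivalence produces a trivialization, i.e., a path $\pt = (X_p = X_p)$, which is exactly the assertion that $(X_p = X_p)$ is trivial.

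I do not expect any real obstacle here, which is why the statement is billed as an immediate corollary: the only point requiring attention is that the band on $X_p = X_p$ must be the one coming from \cref{prop:path-type-banded}, so that the proposition applies, and that since we only need to \emph{inhabit} the trivialization type, we need not track exactly which trivialization the point $\refl_{X_p}$ corresponds to under the equivalence.
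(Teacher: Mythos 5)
Your proof is correct and takes essentially the same approach as the paper: the corollary is stated there as an immediate consequence of \cref{prop:pointed-A-band-trivial}, exactly because the band $(X_p = X_p)$ from \cref{prop:path-type-banded} is pointed by $\refl_{X_p}$, and any pointed $A$-band is trivial. Your attention to using the canonical banding and to needing only inhabitation of the trivialization type matches the intended argument.
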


We next define a tensor product of banded types, using the notion of duals of bands.
Write \(\invrefl : A = A\) for the self-identification of \(A\) associated to the inversion map $\dualid$ (\cref{defn:inversion-map}) via univalence.

\begin{defn}
  Let \(X_p\) be an \(A\)-band.
  The band \(\dual{p} :\jeq \trunc{\dual{\refl}} \ct p \) is the \textbf{dual of \boldmath{\(p\)}}, and the \(A\)-band \(\dual{X_p} :\jeq X_{\dual{p}}\) is the \textbf{dual of \boldmath{\(X_p\)}}.
\end{defn}

Since $\dualid$ is an involution, it follows that taking duals defines an involution on
\(\BAutpl(A)\), meaning that \(X^{**}_p = X_p\).

\begin{lem} \label{lem:negate-pt}
  We have \( \pt = \dual{\pt} \) in \( \BAutpl(A) \).
\end{lem}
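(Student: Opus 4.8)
The plan is to apply \cref{lem:paths-in-bautpl} directly. Unfolding the definition of duals, the band \(\dual{\pt}\) is the \(A\)-band \(A_{\dual{(\trunc{\refl}_0)}}\), whose band is \(\dual{(\trunc{\refl}_0)} = \trunc{\invrefl}_0 \ct \trunc{\refl}_0 = \trunc{\invrefl}_0\), where \(\invrefl : A = A\) is the identification corresponding to the inversion equivalence \(\dualid\) under univalence. Thus proving \(\pt = \dual{\pt}\) amounts to producing a path between the \(A\)-bands \(A_{\trunc{\refl}_0}\) and \(A_{\trunc{\invrefl}_0}\).

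By \cref{lem:paths-in-bautpl}, such a path corresponds to a path \(e : A = A\) of the underlying types satisfying the coherence \(\trunc{\refl}_0 \ct \trunc{e}_0 = \trunc{\invrefl}_0\); equivalently, it is an element of the component \(\pcomp{(A = A)}{\trunc{\invrefl}_0}\), which under univalence is \(\pcomp{(A \simeq A)}{\dualid}\). I would take \(e \jeq \invrefl\), that is, the inversion equivalence \(\dualid\) itself, which manifestly lies in this component and satisfies the required coherence by construction.

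The main---and essentially only---point demanding care is the bookkeeping in \cref{lem:paths-in-bautpl}: one must check that reversing the band \(\trunc{\refl}_0\) of \(\pt\) and composing with the band \(\trunc{\invrefl}_0\) of \(\dual{\pt}\) yields precisely \(\trunc{\invrefl}_0\), so that the inversion equivalence \(\dualid\) is recognized as an inhabitant of the correct component. Beyond this identification there is no genuine obstacle, since \(\dualid\) is by definition the canonical point of \(\pcomp{(A \simeq A)}{\dualid}\), and it supplies the desired path.
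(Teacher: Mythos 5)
Your proof is correct, but it takes a different route from the paper. The paper's proof is a one-liner that invokes \cref{prop:pointed-A-band-trivial}: since \((\pt =_{\BAutpl(A)} X_p) \simeq X\) for any \(A\)-band \(X_p\), and the underlying type of \(\dual{\pt}\) is \(A\), which is pointed, the identity type \(\pt = \dual{\pt}\) is inhabited. That argument leans on the centrality machinery hidden inside \cref{prop:pointed-A-band-trivial} (the evaluation fibration \(\pcomp{(A \simeq X)}{\tilde{p}} \to X\) being an equivalence, reduced to the base case via \cref{lem:forall-BAut1-set}), and it yields more than the statement asks for, namely a full equivalence \((\pt = \dual{\pt}) \simeq A\). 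Your argument instead unfolds the definition of the dual band, observes that \(\dual{\pt}\) is \(A_{\trunc{\invrefl}_0}\), and exhibits \(\invrefl\) itself as an element of the component \(\pcomp{(A = A)}{\trunc{\invrefl}_0}\) required by \cref{lem:paths-in-bautpl}; the band coherence holds essentially by \(\refl\). This is more elementary and self-contained: beyond the existence of the inversion map \(\dualid\) (\cref{defn:inversion-map}, which is where centrality enters), it uses only the characterization of paths between bands and univalence, and it produces a canonical explicit witness. Unwinding the paper's proof, the path it constructs corresponds to the unique equivalence in the component of \(\dualid\) fixing \(\pt\), which is \(\dualid\) itself (as \(\dualid(\pt) = \pt\)), so the two proofs in fact produce essentially the same identification by different means.
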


\begin{proof}
  The underlying type of $\dual{\pt}$ is $A$, which has a base point, so this follows
  from \cref{prop:pointed-A-band-trivial}.
\end{proof}

We now show how to tensor types banded by \(A\).

\begin{defn} \label{defn:bautpl-tensor}
  For \( X_p,\, Y_q  : \BAutpl(A) \), define
  \( X_p \otimes Y_q :\jeq (\dual{X_p} = Y_q) \),
  with the banding from \cref{prop:path-type-banded}.
\end{defn}

It follows from \cref{lem:paths-in-bautpl} that the type \(X_p \otimes Y_q \) is equivalent to \( \pcomp{(X = Y)}{\overline{\dual{p}} \ct q}\).
Since taking duals is an involution, we also have equivalences
$X_p \otimes Y_q \jeq (\dual{X_p} = Y_q) \simeq (X_p = \dual{Y_q}) \simeq \pcomp{(X = Y)}{\overline{p} \ct \dual{q}}$.
Moreover, from \cref{cor:X=X-trivial}, we see that \(\dual{X_p} \otimes X_p = \pt\).

Tensoring defines a binary operation on \(\BAutpl(A)\), and we now show that this operation is symmetric.

\begin{prop} \label{prop:bands-tensor-symmetric}
  For any \(X_p, Y_q : \BAutpl(A) \), there is a path
  \( \sigma_{(X_p, Y_q)} : X_p \otimes Y_q =_{\BAutpl(A)} Y_q \otimes X_p \) such that \(\sigma_{\pt,\pt} = 1 \).
\end{prop}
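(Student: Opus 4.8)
The plan is to build the symmetry from the dualization involution $d : \BAutpl(A) \to \BAutpl(A)$ sending $X_p$ to $\dual{X_p}$, and to reduce the basepoint condition $\sigma_{\pt,\pt} = \refl$ to \cref{prop:inversion-negation-trivial}. Since $X_p \otimes Y_q \jeq (\dual{X_p} = Y_q)$ and $Y_q \otimes X_p \jeq (\dual{Y_q} = X_p)$, by \cref{lem:paths-in-bautpl} it suffices to produce an equivalence of underlying types $\epsilon_{X_p, Y_q} : (\dual{X_p} = Y_q) \simeq (\dual{Y_q} = X_p)$ that respects the bands. First I would define $\epsilon_{X_p, Y_q}$ by reversing and dualizing: send $r : \dual{X_p} = Y_q$ to $\overline{\ap_d(r)}$, a path $\dual{Y_q} = \dual{\dual{X_p}}$, and then compose with the involution witness $\dual{\dual{X_p}} = X_p$ to land in $\dual{Y_q} = X_p$. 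Each ingredient is an equivalence, so $\epsilon_{X_p, Y_q}$ is one.

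Next I would promote $\epsilon_{X_p, Y_q}$ to a path $\sigma_{X_p, Y_q} : X_p \otimes Y_q = Y_q \otimes X_p$ of $A$-bands via \cref{lem:paths-in-bautpl}. The only remaining datum is that $\epsilon_{X_p, Y_q}$ is compatible with the bandings supplied by \cref{prop:path-type-banded}; as this is an equation of elements of a set-truncation, it is a proposition, so I can prove it by inducting on $p$ and $q$, reducing to $X = Y = A$ and $p = q = \trunc{\refl}_0$. There it follows from $\dualid$ being an involution, equivalently $\overline{\invrefl} = \invrefl$, which is exactly what makes the two bands $\overline{\dual{p}} \ct q$ and $\overline{\dual{q}} \ct p$ reverses of one another.

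The hard part will be the coherence $\sigma_{\pt, \pt} = \refl$. At the basepoint both $X_p \otimes Y_q$ and $Y_q \otimes X_p$ are $\pt \otimes \pt$, whose underlying type is $\pcomp{(A = A)}{\invrefl} \simeq \pcomp{(A \simeq A)}{\dualid}$, and under \cref{lem:paths-in-bautpl} the loop $\sigma_{\pt, \pt}$ corresponds to the self-equivalence $\epsilon_{\pt, \pt}$; it equals $\refl$ precisely when $\epsilon_{\pt, \pt}$ is homotopic to the identity. Tracing the definition, $\epsilon_{\pt, \pt}$ acts on underlying self-equivalences of $A$ by inversion $\phi \mapsto \phi\inv$, because path reversal corresponds to inverting equivalences while $d$ acts as the identity on underlying types (it alters only the band) and the involution witness $\dual{\dual{\pt}} = \pt$ has trivial underlying equivalence. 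Pinning down this identification is the delicate step I expect to be the main obstacle. Once it is established, \cref{prop:inversion-negation-trivial} says exactly that $\phi \mapsto \phi\inv$ is homotopic to the identity on $\pcomp{(A \simeq A)}{\dualid}$, so $\epsilon_{\pt, \pt}$ is homotopic to the identity and therefore $\sigma_{\pt, \pt} = \refl$, as required.
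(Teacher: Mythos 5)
Your overall strategy is essentially the paper's: the underlying map of $\sigma$ is path-inversion (your composite $r \mapsto \overline{\ap_d(r)}$ followed by the involution witness has exactly this underlying path, since $d$ and the witness $\dual{\dual{X_p}} = X_p$ are the identity on underlying types), the compatibility with bands is checked by inducting on $p$ and $q$, and $\sigma_{\pt,\pt}=1$ is deduced from \cref{prop:inversion-negation-trivial} via the correspondence between $2$-paths of bands and homotopies of underlying equivalences. Your treatment of the final coherence step is correct and matches the paper.

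The gap is in the band-compatibility step. You claim that, after reducing to $p = q = \trunc{\refl}_0$, compatibility of $\epsilon_{\pt,\pt}$ with the bandings of \cref{prop:path-type-banded} ``follows from $\dualid$ being an involution, equivalently $\overline{\invrefl} = \invrefl$.'' That identity only yields the weaker bookkeeping fact that path-inversion interchanges the components $\pcomp{(X=Y)}{\overline{\dual{p}} \ct q}$ and $\pcomp{(Y=X)}{\overline{\dual{q}} \ct p}$ (which your $\ap_d$ formulation renders unnecessary anyway); it is not the compatibility with the bands of \cref{prop:path-type-banded}, which are specific elements of $\Trunc{A = (X_{\dual{p}} = Y_q)}_0$ built using centrality. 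Unfolding the condition at $p = q = \trunc{\refl}_0$: the banding equivalence $A \simeq (\dual{\pt} = \pt)$ sends $a$ to (the path of) the equivalence $a \cdot \dual{(-)}$, and compatibility demands that path-inversion fix all of these, i.e.\ that \emph{every} $\phi$ in $\pcomp{(A \simeq A)}{\dualid}$ satisfy $\phi^{-1} = \phi$. That is precisely \cref{prop:inversion-negation-trivial}, a statement about the whole component which uses centrality via \cref{cor:ev-f-equation}; the involution property is only its instance at $\phi = \dualid$. This distinction is not pedantic: plain path-inversion on $(\pt = \pt)$, lying over the component of $\id$, does \emph{not} respect the canonical band of \cref{prop:path-type-banded} (for $A = \Sb^1$ it would force $a \cdot a = \pt$ for all $a$), so compatibility of inversion-type maps with these bands is a substantive condition special to the component of $\dualid$. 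Your proof is repaired by invoking \cref{prop:inversion-negation-trivial} at this step as well, which is exactly what the paper's ``transport condition'' triangle with $\ev_{\invrefl}$ does.
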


\begin{proof}
  By univalence and the characterization of paths between bands, we begin by giving an equivalence between the underlying types.
  The equivalence will be path-inversion, as a map
  \[ \pcomp{(X = Y)}{\overline{p} \ct \dual{q}} \longrightarrow \pcomp{(Y = X)}{\overline{q} \ct \dual{p}}. \]
  To see that this is valid it suffices to show that the inversion of \(\overline{p} \ct \dual{q} \) is \(\overline{q} \ct \dual{p} \).
  We have:
  \[ \overline{\overline{p} \ct \dual{q}}
    \jeq \overline{\overline{p} \ct \dual{\refl} \ct q}
    = \overline{\dual{\refl} \ct q} \ct p
    = \overline{q} \ct \overline{\dual{\refl}} \ct p
    = \overline{q} \ct \dual{\refl} \ct p
    \jeq \overline{q} \ct \dual{p} , \]
  where we have used associativity of path composition, and that \(\overline{\invrefl} = \invrefl\) by \cref{prop:inversion-negation-trivial}.

  To prove the transport condition, we may path induct on both \(p\) and \(q\) which then yields the following triangle:
  \[ \begin{tikzcd}
      \pcomp{(A = A)}{\invrefl} \ar[dr, "\ev_{\invrefl}" swap] \ar[rr, "p \mapsto \overline{p}"] && \pcomp{(A = A)}{\invrefl} \ar[dl, "\ev_{\invrefl}"] \\
      & A \period
  \end{tikzcd} \]
  Here we are writing \( \ev_{\invrefl} \) for the composite
  \( \pcomp{(A = A)}{\invrefl} \simeq \pcomp{(A \simeq A)}{\dualid} \xrightarrow{\ev_{\dualid}} A. \)
  The horizontal map is given by path-inversion, which is homotopic to the identity by \cref{prop:inversion-negation-trivial}, hence the triangle commutes.

  Paths between paths between banded types correspond to homotopies between the underlying equivalences.
  Thus \(\sigma_{\pt, \pt} = 1\) since path-inversion on \(\pcomp{(A = A)}{\invrefl}\) is homotopic to the identity.
\end{proof}

We now use \cref{lem:symmetry} to make \(\BAutpl(A)\) into an H-space.

\begin{thm}\label{thm:BAutpl-H-space}
  The binary operation \( \otimes \) makes \( \BAutpl(A) \) into an abelian H-space.
\end{thm}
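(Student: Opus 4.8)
The plan is to apply \cref{lem:symmetry}, which produces an abelian H-space structure from a binary operation together with a symmetry $\sigma_{(X_p,Y_q)}$ satisfying $\sigma_{\pt,\pt} = 1$ and a left unit law. The binary operation is $\otimes$ from \cref{defn:bautpl-tensor}, and the symmetry with the required basepoint coherence is exactly what \cref{prop:bands-tensor-symmetric} supplies. So the remaining obligation is to construct the left unit law, i.e.\ a family of paths $\pt \otimes Y_q =_{\BAutpl(A)} Y_q$.

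First I would unfold the left unit law. By definition $\pt \otimes Y_q \jeq (\dual{\pt} = Y_q)$, and by \cref{lem:negate-pt} we have $\pt = \dual{\pt}$ in $\BAutpl(A)$, so $\pt \otimes Y_q$ is identified with the path type $(\pt =_{\BAutpl(A)} Y_q)$. By \cref{prop:pointed-A-band-trivial}, this path type is equivalent to the underlying type $Y$. What I actually need, however, is an identification of $\pt \otimes Y_q$ with $Y_q$ \emph{as $A$-bands}, so I must check that the equivalence of underlying types respects the bandings in the sense of \cref{lem:paths-in-bautpl}.

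The cleanest route is to reduce to the basepoint. Since I am constructing a path in $\BAutpl(A)$, which is a family of paths indexed by $Y_q : \BAutpl(A)$, and since the goal (being a path type, hence here a set-level statement after the reductions) is manageable, I would invoke \cref{lem:forall-BAut1-set} to reduce the construction of the unit law to the single case $Y_q \jeq \pt$. In that case the claim becomes $\pt \otimes \pt = \pt$, which follows by combining $\pt \otimes \pt \jeq (\dual{\pt} = \pt)$ with \cref{lem:negate-pt} and \cref{prop:pointed-A-band-trivial} (or directly from \cref{cor:X=X-trivial}, since $\dual{\pt} \otimes \pt = \pt$ and $\dual{\pt} = \pt$). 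This gives the left unit law $\mu_l : \pt \otimes - = \id$.

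The main obstacle is ensuring that the hypotheses of \cref{lem:symmetry} fit together coherently — in particular that the symmetry's basepoint condition $\sigma_{\pt,\pt} = 1$ from \cref{prop:bands-tensor-symmetric} is stated with respect to the same reflexivity that \cref{lem:symmetry} expects, and that the left unit law I construct is genuinely a homotopy $\pt \otimes - = \id_{\BAutpl(A)}$ rather than merely a pointwise family of paths. Once the left unit law and the symmetry are in hand with the normalization $\sigma_{\pt,\pt} = 1$, \cref{lem:symmetry} delivers the abelian H-space structure directly, with the right unit law induced as $\sigma_{(Y_q,\pt)} \ct \mu_l(Y_q)$ and the coherence automatically witnessed by $\refl$.
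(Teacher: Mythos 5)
Your overall skeleton matches the paper's: feed $\otimes$, the symmetry from \cref{prop:bands-tensor-symmetric}, and a left unit law into \cref{lem:symmetry}. But your construction of the left unit law has a genuine gap. You invoke \cref{lem:forall-BAut1-set} to reduce the family of paths $\prod_{Y_q} (\pt \otimes Y_q =_{\BAutpl(A)} Y_q)$ to the single case $Y_q \jeq \pt$. That lemma applies only to \emph{set-valued} families, and this family is not set-valued: by \cref{prop:path-type-banded} (or \cref{lem:paths-in-bautpl}), each path type $\pt \otimes Y_q = Y_q$ is itself an $A$-band, hence merely equivalent to $A$, and a central type $A$ is connected and in general not a set (if it were a set it would be contractible). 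Your parenthetical ``being a path type, hence \ldots a set-level statement'' is exactly where this goes wrong: $\BAutpl(A)$ is not a $1$-type, so its path types are not sets. If reductions of this kind were allowed for non-set-valued families, you could equally ``prove'' $\prod_{Y_q} (\pt = Y_q)$ from $\refl : \pt = \pt$, i.e., that $\BAutpl(A)$ is contractible --- false, since $\Omega \BAutpl(A) \simeq A$. So the single path $\pt \otimes \pt = \pt$ carries no uniformity and does not yield a unit law.

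The fix --- which is what the paper does, and which you in fact set up correctly in your second paragraph before abandoning it --- is to split data from property. The equivalence of underlying types $(\pt = Y_q) \simeq Y$ is already given \emph{uniformly} in $Y_q$ by \cref{prop:pointed-A-band-trivial} (no basepoint reduction needed); composing with the identification $\pt = \dual{\pt}$ of \cref{lem:negate-pt} identifies the underlying types of $\pt \otimes Y_q$ and $Y_q$. What remains is the band-compatibility condition of \cref{lem:paths-in-bautpl}, which is an equation in the set $\Trunc{A = Y}_0$, hence a proposition; \emph{this} is the part one may verify by inducting on the band $q$ (this induction is legitimate precisely because the goal is a proposition), and at $q \jeq \trunc{\refl}_0$ it is clear. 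With the unit law constructed this way, your final paragraph applying \cref{lem:symmetry} goes through as stated; your worry about pointwise paths versus a homotopy $\pt \otimes (-) = \id$ is harmless, since function extensionality converts one into the other.
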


\begin{proof}
  We start by showing the left unit law.
  Since \(\dual{\pt} = \pt\), we instead consider the goal
  \( (\pt = X_p) = X_p \).
  An equivalence between the underlying types is given by \cref{prop:pointed-A-band-trivial}, which after inducting on \(p\) clearly respects the bands.
  Using \cref{prop:bands-tensor-symmetric} and \cref{lem:symmetry}, we obtain the desired H-space structure.
\end{proof}

\begin{cor}\label{cor:central-infinite-loop-space}
For a central type $A$, the type $\BAutpl(A)$ is again central.
Therefore, $A$ is an infinite loop space, in a unique way.
Moreover, every pointed map $A \pto A$ is infinitely deloopable, in a unique way.
\end{cor}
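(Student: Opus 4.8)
The plan is to assemble the three claims of \cref{cor:central-infinite-loop-space} from results already established in this section. For the first claim---that $\BAutpl(A)$ is central---I would invoke the characterization of centrality from \cref{prop:central-tfae}. Specifically, condition (2) there says that a pointed type is central precisely when it is a connected H-space whose type of pointed self-maps is a set. We already know $\BAutpl(A)$ is connected (by \cref{rmk:bautpl-locally-small}), that it is an H-space (by \cref{thm:BAutpl-H-space}), and that the type $\BAutpl(A) \pto \BAutpl(A)$ is a set (by the final assertion of \cref{thm:Omega-equiv}). So the first claim follows immediately by feeding these three facts into \cref{prop:central-tfae}(1)$\Leftarrow$(2).

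The second claim---that $A$ is an infinite loop space in a unique way---I would prove by iteration. Since $\BAutpl(A)$ is again central, I can apply \cref{prop:loops-bautpl} to it, obtaining $\Omega \BAutpl(\BAutpl(A)) \simeq \BAutpl(A)$, and combining with $\Omega \BAutpl(A) \simeq A$ this exhibits a second delooping of $A$. Proceeding inductively, each $\BAutpl^{(n)}(A)$ is central, hence has its own delooping $\BAutpl^{(n+1)}(A)$, so $A$ is an infinite loop space. Uniqueness comes from \cref{thm:uniquely-deloopable}: at each stage the delooping is unique, so the entire tower is determined up to contractible choice. The main subtlety to state carefully here is what ``infinite loop space in a unique way'' means---I would take it to mean that the type of infinite deloopings is contractible, which follows because each level's delooping space is contractible by \cref{thm:uniquely-deloopable} applied to the central type at that level.

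The third claim---that every pointed map $A \pto A$ is infinitely deloopable uniquely---parallels the second but uses \cref{thm:Omega-equiv} in place of \cref{thm:uniquely-deloopable}. Given $f : A \pto A$, \cref{thm:Omega-equiv} produces a unique delooping $Bf : \BAutpl(A) \pto \BAutpl(A)$. Since $\BAutpl(A)$ is central, I can apply \cref{thm:Omega-equiv} again to $Bf$, obtaining a unique $B(Bf)$, and iterate. Uniqueness at each stage is exactly the statement that $\Omega'$ and $B$ are inverse equivalences, so the space of deloopings of $f$ at each level is contractible, and the infinite tower is again contractible.

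I expect the only real obstacle to be bookkeeping rather than mathematics: one must verify that centrality genuinely propagates up the tower, so that \cref{thm:Omega-equiv,thm:uniquely-deloopable,prop:loops-bautpl} remain applicable at every stage. This is precisely what the first claim secures---once $\BAutpl$ sends central types to central types, the induction runs without further input, and each step is a verbatim re-application of the same three theorems. The cleanest organization is therefore to prove the first claim in full and then let it drive the inductions for the second and third claims.
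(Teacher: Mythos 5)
Your proposal is correct and follows essentially the same route as the paper's proof: centrality of $\BAutpl(A)$ via condition (2) of \cref{prop:central-tfae} with \cref{thm:BAutpl-H-space} and \cref{thm:Omega-equiv} as inputs, then the infinite delooping by iterating \cref{prop:loops-bautpl} with uniqueness from \cref{thm:uniquely-deloopable}, and the delooping of self-maps by iterating \cref{thm:Omega-equiv}. Your observation that centrality propagating up the tower is what drives the induction is exactly the organization the paper uses, just stated more explicitly.
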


\begin{proof}
That $\BAutpl(A)$ is central follows from condition (2) of \cref{prop:central-tfae},
using \cref{thm:Omega-equiv,thm:BAutpl-H-space} as inputs.
That $A$ is a infinite loop space then follows from \cref{prop:loops-bautpl}:
writing \(\BAutpln{0}(A) :\jeq A\) and \(\BAutpln{n+1}(A) :\jeq \BAutpl(\BAutpln{n}(A))\),
we see that \(\BAutpln{n}(A)\) is an \(n\)-fold delooping of \(A\).
The uniqueness follows from \cref{thm:uniquely-deloopable}.
That every pointed self-map is infinitely deloopable in a unique way
follows by iterating \cref{thm:Omega-equiv}.
\end{proof}

Note that \(\BAutpl(A)\) is essentially small (\cref{rmk:bautpl-locally-small}),
so these deloopings can be taken to be in the same universe as $A$.

From \cref{thm:BAutpl-H-space} we deduce another characterization of central types:

\begin{prop} \label{prop:central-iff-pointed-bands-contr}
  A pointed, connected type \(A\) is central if and only if \(\sum_{X : \BAutpl(A)} \, X \) is contractible.
\end{prop}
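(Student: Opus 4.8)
The plan is to recognize the displayed total space as the total space of the \emph{tautological family} over $\BAutpl(A)$ and then apply the fundamental theorem of identity types, which will give both directions of the equivalence at once.

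Write $T : \BAutpl(A) \to \Type$ for the family $T(X_p) :\jeq X$, so that $T(\pt) \jeq A$ carries the distinguished point $\pt : A$. The type $\Sigma_{X : \BAutpl(A)} X$ in the statement is exactly $\Sigma_{X_p} T(X_p)$. By the fundamental theorem of identity types~\cite[Section~5.8]{HoTTBook}, applied to $T$ pointed at $\pt : T(\pt)$, this total space is contractible if and only if the canonical fiberwise map $c_{X_p} : (\pt =_{\BAutpl(A)} X_p) \to X$ determined by $c_{\pt}(\refl) :\jeq \pt$ is an equivalence for every $A$-band $X_p$.

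I would then reduce this fiberwise condition to the basepoint. Being an equivalence is a mere proposition, and $\BAutpl(A)$ is connected by \cref{rmk:bautpl-locally-small}; since a family of propositions over a connected type is determined by its value at the basepoint, the condition ``$c_{X_p}$ is an equivalence for all $X_p$'' is equivalent to ``$c_{\pt}$ is an equivalence.'' Thus $\Sigma_{X_p} T(X_p)$ is contractible precisely when the single map $c_{\pt} : (\pt =_{\BAutpl(A)} \pt) \to A$ is an equivalence.

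It remains to identify $c_{\pt}$ with $\evid$. By \cref{lem:paths-in-bautpl} we have $(\pt =_{\BAutpl(A)} \pt) \simeq \pcomp{(A = A)}{\refl}$, and by univalence this component is equivalent to $\pcomp{(A \simeq A)}{\id} \jeq ZA$; note that neither step uses centrality. Under these equivalences $c_{\pt}$ sends a loop $\omega$ to $\mathrm{transport}^{T}(\omega, \pt)$, and since $T$ is the first projection, transport along $\omega$ is computed from its underlying path of types, which corresponds to the self-equivalence $\phi : A \simeq A$ classified by $\omega$; hence $\mathrm{transport}^{T}(\omega, \pt) = \phi(\pt) = \evid(\phi)$. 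So $c_{\pt}$ is the composite $(\pt = \pt) \xrightarrow{\sim} ZA \xrightarrow{\evid} A$, and therefore $c_{\pt}$ is an equivalence if and only if $\evid$ is, i.e., if and only if $A$ is central. The one genuinely computational point, and the step I would be most careful about, is this last identification of $c_{\pt}$ with $\evid$; it is pure univalence bookkeeping, and it is worth noting that it uses connectedness of $\BAutpl(A)$ but no connectedness hypothesis on $A$ itself.
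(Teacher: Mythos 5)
Your proof is correct, and it takes a genuinely different route from the paper's. The paper treats the two directions separately: for the forward direction it invokes the left unit law of the tensor product H-space structure from \cref{thm:BAutpl-H-space}, writing $\Sigma_{X : \BAutpl(A)}\, X \simeq \Sigma_{X : \BAutpl(A)}\, (\dual{\pt} = X) \simeq 1$; for the converse it passes to loop spaces, identifying $\Omega\big(\Sigma_{X : \BAutpl(A)}\, X\big)$ with the fibre $\Sigma_{f : A \pto A}\, \Trunc{f = \id}$ of $\evid$ over the basepoint, so that contractibility of the total space forces this fibre to be contractible and hence, using connectedness of $A$, forces $\evid$ to be an equivalence. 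You instead get both directions simultaneously from the fundamental theorem of identity types applied to the tautological family, reducing the fiberwise condition to the basepoint via connectedness of $\BAutpl(A)$ (\cref{rmk:bautpl-locally-small}) and then identifying the basepoint map with $\evid$ by univalence bookkeeping; all three steps are biconditionals. Your route buys two things: it bypasses \cref{thm:BAutpl-H-space} entirely, so the proposition could be stated and proved immediately after \cref{lem:paths-in-bautpl}, and it makes visible that the connectedness hypothesis on $A$ is redundant (each side of the equivalence already implies it), a point hidden in the paper's converse, which uses connectedness of $A$ explicitly. What it costs is that you essentially reprove, inline and in biconditional form, the content of \cref{prop:pointed-A-band-trivial}, whose proof in the paper uses the same reduction-to-basepoint trick via \cref{lem:forall-BAut1-set} (your proposition-valued version of that reduction is a correct special case); the paper's arrangement instead harvests the forward direction for free from machinery it has already built, at the price of a separate loop-space argument for the converse.
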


\begin{proof}
  If \(A\) is central, then by the left unit law of~\cref{thm:BAutpl-H-space}, we have
  \[ \sum_{X : \BAutpl(A)} X \, \simeq \, \sum_{X : \BAutpl(A)} (\dual{\pt} =_{\BAutpl(A)} X) \, \simeq \, 1. \]

  Conversely, if \(\sum_{X : \BAutpl(A)} X\) is contractible, then so is its loop space.
  But the loop space is equivalent to \( \sum_{f : A \to_* A} \, \Trunc{f = \id} \), i.e., the fibre of \(\evid\) over the base point.
  Thus \(\evid\) is an equivalence, since \(A\) is connected.
\end{proof}

\subsection{Bands and torsors}
\label{ss:bands-and-torsors}

Let $A$ be a central type.
We define a notion of \(A\)-torsor which turns out to be equivalent to the notion of \(A\)-band from the previous section.
Under our centrality assumption, it follows that the resulting type of \(A\)-torsors is a delooping of \(A\).
An equivalent type of \(A\)-torsors has been independently studied by David Wärn~\cite{Warn23},
who has also shown that it gives a delooping of \(A\) under the weaker hypothesis
that \(A\) has a unique H-space structure.

\begin{defn}
An \define{action} of $A$ on a type $X$ is a map $\alpha : A \times X \to X$ such that
$\alpha(\pt, x) = x$ for all $x : X$.
If $X$ has an $A$-action, we say that it is an \define{$A$-torsor} if
it is merely inhabited and $\alpha(-,x)$ is an equivalence for every $x : X$.
The type of \define{$A$-torsor structures} on a type $X$ is
\[
  T_A(X) :\jeq \sum_{\alpha : A \times X \to X} (\alpha(\pt,-) = \id_X) \times
               \Trunc{X}_{-1} \times \prod_{x:X} \term{IsEquiv}\, \alpha(-,x) ,
\]
and the type of $A$-torsors is $\sum_{X:\Type} T_A(X)$.
\end{defn}

Since $A$ is connected, an $A$-action on $X$ is the same as a pointed map
$A \pto \pcomp{(X \simeq X)}{\id}$.  Normally one would require at a minimum
that this map sends multiplication in $A$ to composition.
We explain in \cref{rmk:action-coherent} why our definition suffices.

The condition that $\alpha(-,x)$ is an equivalence for all $x$ is equivalent
to requiring that for every $x_0,\, x_1 : X$, there exists a unique $a : A$
with $\alpha(a,x_0) = x_1$.
It is also equivalent to saying that $(\alpha, \pr_2) : A \times X \to X \times X$
is an equivalence.

For any type $X$, write $\ev_{\simeq} : (A \simeq X) \to X$ for the evaluation fibration
which sends an equivalence $e$ to $e(\pt)$.
For a map $f$, write $\term{Sect}(f)$ for the type of (unpointed) sections of $f$.

\begin{lem}
For any $X$, we have an equivalence
\[
  T_A(X) \simeq \Trunc{X}_{-1} \times \term{Sect}(\ev_{\simeq}).
\]
\end{lem}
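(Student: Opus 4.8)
\emph{The plan} is to obtain the equivalence as a composite of routine reorganizations of data; no input about $A$ or torsors beyond unwinding the definitions is needed. Since $\Trunc{X}_{-1}$ occurs as a standalone factor of $T_A(X)$ that interacts with nothing else, I would first split it off, reducing the goal to an equivalence
\[
  \sum_{\alpha : A \times X \to X} (\alpha(\pt,-) = \id_X) \times \prod_{x:X} \term{IsEquiv}\, \alpha(-,x) \;\simeq\; \term{Sect}(\ev_{\simeq}).
\]

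Next I would curry the action, using $(A \times X \to X) \simeq (X \to (A \to X))$ to replace $\alpha$ by $\alpha'$ with $\alpha'(x)(a) \jeq \alpha(a,x)$; under this correspondence $\alpha(-,x)$ becomes $\alpha'(x)$ and $\alpha(\pt,-)$ becomes the function $x \mapsto \alpha'(x)(\pt)$. The key step is then to combine $\alpha'$ with the family of equivalence witnesses into a single map landing in $A \simeq X$. This is exactly the distributivity equivalence $\prod\sum \simeq \sum\prod$, a consequence of function extensionality (\cite[Thm.~2.15.7]{HoTTBook}):
\[
  \sum_{\alpha' : X \to (A \to X)} \prod_{x:X} \term{IsEquiv}\, \alpha'(x)
  \;\simeq\;
  \Big(X \to \sum_{f : A \to X} \term{IsEquiv}\, f\Big)
  \;\jeq\;
  \big(X \to (A \simeq X)\big),
\]
carrying a pair $(\alpha', e)$ to the map $s$ with $s(x) \jeq (\alpha'(x), e(x))$.

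Finally I would match the remaining unit law with the section condition. By construction $\ev_{\simeq}(s(x)) \jeq s(x)(\pt) \jeq \alpha'(x)(\pt) \jeq \alpha(\pt,x)$, so the function $\ev_{\simeq}\circ s$ agrees definitionally with $\alpha(\pt,-)$, and hence the path type $(\alpha(\pt,-) = \id_X)$ coincides with $(\ev_{\simeq}\circ s = \id_X)$. Composing the displayed equivalences then identifies the $\Sigma$-type above with $\sum_{s : X \to (A \simeq X)} (\ev_{\simeq}\circ s = \id_X) \jeq \term{Sect}(\ev_{\simeq})$, completing the proof.

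The only point requiring care is ensuring that the witness of the unit law survives the currying and distributivity steps as the section homotopy. The mild obstacle is that this witness is a path between functions, and one must check it is transported correctly; I would avoid transporting it explicitly by working at the level of the underlying functions, where $\ev_{\simeq}\circ s$ and $\alpha(\pt,-)$ are literally the same, so the two path types are identical on the nose. Everything else is bookkeeping.
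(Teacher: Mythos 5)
Your proof is correct and takes essentially the same route as the paper: the paper's own (sketched) proof likewise curries the action to the map $x \mapsto \alpha(-,x) : X \to (A \to X)$, uses the equivalence witnesses to land in $A \simeq X$, and reads the unit law $\alpha(\pt,-) = \id_X$ as the section condition, leaving to the reader exactly the bookkeeping---splitting off the factor $\Trunc{X}_{-1}$ and the $\Pi$--$\Sigma$ distributivity---that you spell out. There is no gap; your observation that $\ev_{\simeq} \circ s$ and $\alpha(\pt,-)$ agree definitionally is precisely why the paper can call this ``simply a reshuffling of the data.''
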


\begin{proof}
This is simply a reshuffling of the data.
The map from left to right sends a torsor structure with action
$\alpha : A \times X \to X$ to the map $X \to (A \to X)$ sending $x$ to $\alpha(-,x)$.
By assumption, this lands in the type of equivalences, and the condition
$\alpha(\pt,-) = \id_X$ says that it is a section.
We leave the remaining details to the reader.
\end{proof}

\begin{lem}
Let $X$ be an $A$-torsor.  Then $X$ is connected.
\end{lem}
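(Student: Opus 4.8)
The plan is to leverage the torsor axiom that $\alpha(-,x)$ is an equivalence for each $x : X$, together with the fact that $A$ is connected. Being connected is a proposition (it amounts to contractibility of $\Trunc{X}_0$), so although the torsor structure only hands us a mere witness $\Trunc{X}_{-1}$ of inhabitation, we are free to eliminate out of this truncation and assume we have an honest point $x_0 : X$. The torsor structure then supplies an equivalence $\alpha(-,x_0) : A \simeq X$.

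Since $A$ is central, it is in particular connected by \cref{prop:hspace-pointed-section-comp}. The operation $\Trunc{-}_0$ is functorial and sends equivalences to equivalences, so $\alpha(-,x_0)$ induces an equivalence $\Trunc{A}_0 \simeq \Trunc{X}_0$; as $\Trunc{A}_0$ is contractible, so is $\Trunc{X}_0$, i.e.\ $X$ is connected. Concretely, $\Trunc{X}_0$ is inhabited by $\trunc{x_0}_0$, and every other point is merely equal to it because the corresponding statement holds in the connected type $A$ and transfers along the equivalence.

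The only point requiring care is the role of the mere inhabitation, and I expect no genuine obstacle beyond this bookkeeping. The axiom $\prod_{x:X} \term{IsEquiv}\,\alpha(-,x)$ is vacuously satisfied when $X$ is empty, so it is precisely the hypothesis $\Trunc{X}_{-1}$ that rules out the empty torsor and produces the basepoint $x_0$ needed to form the equivalence with $A$. Given that, the statement is a one-line consequence of ``a type equivalent to a connected type is connected,'' with the mere inhabitation doing exactly the work of supplying a point at which to compare $X$ with $A$.
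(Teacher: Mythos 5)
Your proof is correct and follows exactly the same route as the paper's: use that connectedness is a proposition to extract a point $x_0 : X$ from the mere inhabitation, obtain the equivalence $\alpha(-,x_0) : A \simeq X$, and transfer connectedness of $A$ (from \cref{prop:hspace-pointed-section-comp}) across this equivalence. The extra remarks about functoriality of truncation and the necessity of the $\Trunc{X}_{-1}$ hypothesis are just explicit bookkeeping the paper leaves implicit.
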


\begin{proof}
Since $X$ is merely inhabited and our goal is a proposition, we may assume
that we have $x_0 : X$.
Then we have an equivalence $\alpha(-,x_0) : A \to X$.
$A$ is connected by \cref{prop:hspace-pointed-section-comp}, so it follows that $X$ is.
\end{proof}

\begin{prop}\label{prop:band-from-torsor}
Let $X$ be an $A$-torsor.  Then $X$ is banded by $A$.
\end{prop}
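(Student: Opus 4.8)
The plan is to turn the fibrewise equivalences $\alpha(-,x) : A \simeq X$ supplied by the torsor structure into an identification $A = X$, and then to extract a single well-defined element of the set $\Trunc{A = X}_0$ even though $X$ is only merely inhabited.

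First I would define a map $g : X \to \Trunc{A = X}_0$ sending $x$ to $\trunc{p_x}_0$, where $p_x : A = X$ is the identification obtained from the equivalence $\alpha(-,x)$ by univalence. This is well-typed because the torsor structure guarantees that $\alpha(-,x)$ is an equivalence for every $x : X$; the unit law $\alpha(\pt,-) = \id_X$ plays no role here.

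The key point is that this assignment does not depend on the choice of $x$ in a way that is visible to the set-truncation. Since $\Trunc{A = X}_0$ is a set, the map $g$ factors through $\Trunc{X}_0$ by the universal property of the $0$-truncation, yielding $\bar{g} : \Trunc{X}_0 \to \Trunc{A = X}_0$. By the previous lemma $X$ is connected, so $\Trunc{X}_0$ is contractible; evaluating $\bar{g}$ at its center produces the required band in $\Trunc{A = X}_0$, which exhibits $X$ as banded by $A$.

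The main obstacle is precisely that a torsor carries no chosen base point—only a proof that $X$ is merely inhabited—so one cannot simply apply univalence to a single equivalence $\alpha(-,x)$ and truncate. Connectedness of $X$ (equivalently, the fact that $g$ is weakly constant into the set $\Trunc{A = X}_0$, so that any two values $g(x)$ and $g(y)$ agree) is exactly what makes the construction canonical and independent of this choice.
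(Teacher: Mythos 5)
Your proof is correct and matches the paper's argument in essence: both turn the family of equivalences $\alpha(-,x) : A \simeq X$ into a map from $X$ into the set $\Trunc{A = X}_0$ (the paper phrases this as a section of $\ev_{\simeq}$ landing in a single component), and both use connectedness of $X$ from the preceding lemma to extract a canonical band. Your factoring through the contractible type $\Trunc{X}_0$ is just an explicit spelling-out of the paper's ``lands in a component'' step, so there is nothing to add.
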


\begin{proof}
Associated to the torsor structure on $X$ is a section $X \to (A \simeq X)$ of $\ev_{\simeq}$.
Since $X$ is $0$-connected, it lands in a component of $A \simeq X$.
By univalence, this determines a banding of $X$.
\end{proof}

\begin{thm}
Let $X$ be a type.  There is an equivalence $T_A(X) \simeq \Trunc{A = X}_0$.
Therefore, there is an equivalence between the type of $A$-torsors and $\BAutpl(A)$.
\end{thm}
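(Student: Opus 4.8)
The plan is to establish the equivalence $T_A(X) \simeq \Trunc{A = X}_0$ for each type $X$, since the statement about the total types of $A$-torsors and $A$-bands then follows immediately by summing over $X : \Type$. First I would invoke the preceding lemma to reduce $T_A(X)$ to $\Trunc{X}_{-1} \times \term{Sect}(\ev_\simeq)$, where $\ev_\simeq : (A \simeq X) \to X$ is the evaluation fibration sending $e$ to $e(\pt)$. The remaining task is thus to produce an equivalence $\Trunc{X}_{-1} \times \term{Sect}(\ev_\simeq) \simeq \Trunc{A = X}_0$.

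The key geometric input is that, since $A$ is central, the identity component $\pcomp{(A \simeq A)}{\id}$ maps by an equivalence to $A$ via $\evid$; more generally, I expect each fibre of $\ev_\simeq$ over a point of $X$ to behave well once we know $A \simeq X$ is inhabited. Concretely, I would argue as follows. Both sides are sets, so it suffices to prove a logical equivalence together with the observation that both are sets, or better, to exhibit the equivalence after reducing to the case where $A = X$ is inhabited. If $\Trunc{A = X}_{-1}$ is empty then both sides are empty: $\term{Sect}(\ev_\simeq)$ forces $A \simeq X$ to be inhabited hence $A = X$ by univalence, and $\Trunc{X}_{-1}$ together with an equivalence $A \simeq X$ would contradict emptiness. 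So I may assume $\Trunc{A = X}_{-1}$, and since the goal $\Trunc{A = X}_0$ is a set and the domain is a product with the proposition $\Trunc{X}_{-1}$, I can use univalence to transport along a path $A = X$ and reduce to proving $\Trunc{A}_{-1} \times \term{Sect}(\evid) \simeq \Trunc{A = A}_0$.

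In this reduced situation $A$ is connected by \cref{prop:hspace-pointed-section-comp}, so $\Trunc{A}_{-1}$ is contractible and drops out, and $\Trunc{A = A}_0 \simeq \Trunc{A \simeq A}_0 = \pi_0(A \simeq A)$ by univalence. It therefore remains to identify the set of sections $\term{Sect}(\evid)$ of the full evaluation fibration $\ev_\simeq : (A \simeq A) \to A$ with $\pi_0(A \simeq A)$. The decisive point is that $\ev_\simeq$ restricted to each component is controlled by centrality: by \cref{prop:central-ev-f-equiv}, for each pointed self-map representing a component the evaluation fibration on that component is an equivalence, so a section is determined precisely by a choice of component of $A \simeq A$. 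I would make this precise by expressing $A \simeq A$ as a sum over $\pi_0(A \simeq A)$ of its components, observing that $\ev_\simeq$ lands componentwise and each restricted evaluation fibration is an equivalence, so that giving a section of $\ev_\simeq$ amounts to choosing, for each $a : A$, a component — and by connectedness of $A$ this choice is constant, yielding exactly one element of $\pi_0(A \simeq A)$.

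The main obstacle will be handling the bookkeeping of components cleanly: I must argue that a section of $\ev_\simeq$ cannot mix components, i.e. that its value component is locally constant on the connected type $A$ and hence globally constant, and then that within the selected component the section data is uniquely determined because the evaluation fibration there is an equivalence. This is where \cref{prop:central-ev-f-equiv} and \cref{lem:forall-BAut1-set} do the real work, the latter letting me reduce a section — a family of equivalences indexed over the connected type $A$ — to its value at the basepoint. Once the section is pinned down by its chosen component, the bijection with $\pi_0(A \simeq A) \simeq \Trunc{A = A}_0$ is immediate, and transporting back along $A = X$ completes the proof for general $X$.
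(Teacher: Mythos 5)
Your overall route---reshuffling $T_A(X)$ into $\Trunc{X}_{-1} \times \term{Sect}(\ev_{\simeq})$ and then using centrality to see that sections of the evaluation fibration are governed by a choice of component---is the right idea, and your endgame (each component of $A \simeq X$ restricts the evaluation to an equivalence, hence contributes a unique section) is essentially the paper's fibre computation. But the middle reduction contains a genuine gap. The goal ``$\Trunc{X}_{-1} \times \term{Sect}(\ev_{\simeq}) \simeq \Trunc{A = X}_0$'' is \emph{not} a proposition: both sides are sets, so the type of equivalences between them is a set, but not a proposition. Consequently you may not eliminate the mere inhabitant of $\Trunc{A = X}_{-1}$ in order to ``transport along a path $A = X$'' and reduce to the case $X \jeq A$; propositional truncation only eliminates into propositions. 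For the same reason your fallback ``both sides are sets, so it suffices to prove a logical equivalence'' is false (logically equivalent sets need not be equivalent---compare $\Zb$ and $\Zb/2$), and your opening case split on whether $\Trunc{A = X}_{-1}$ is empty or inhabited is an instance of excluded middle, which is not available.

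The repair is exactly what the paper does: first construct a canonical map $f : T_A(X) \to \Trunc{A = X}_0$ with no truncation hypotheses (\cref{prop:band-from-torsor}: a torsor is connected, so its section of $\ev_{\simeq}$ lands in a single component, which by univalence is a banding), and only then prove the propositional statement that each fibre of $f$ is contractible. At that point induction on the band $p : \Trunc{A = X}_0$ \emph{is} legitimate, and the fibre over $p$ is the type of sections of the restricted evaluation $\pcomp{(A \simeq X)}{\tilde{p}} \to X$, which is contractible because that map is an equivalence by \cref{prop:pointed-A-band-trivial}. Note also that \cref{lem:forall-BAut1-set} does not say what you use it for: it reduces set-valued families over $\BAutpl(A)$ to their value at the basepoint, not families over the connected type $A$ (for a general connected type this reduction is false, e.g.\ for a nontrivial local system of sets on $\Sb^1$); in the paper it is used inside the proof of \cref{prop:pointed-A-band-trivial} to transfer centrality from the basepoint to an arbitrary band, which is precisely the step your transport-along-$A = X$ manoeuvre was trying, impermissibly, to shortcut. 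Your component-counting for sections over $A$ needs only that a map from a connected type to a set is constant, together with \cref{prop:central-ev-f-equiv}, so once the argument is restructured around the canonical map, your reasoning does go through.
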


\begin{proof}
\cref{prop:band-from-torsor} gives a map $f$.  We check that the fibres are contractible.
Let $p : \Trunc{A = X}_0$ be a banding of $X$.
An $A$-torsor structure $t$ on $X$ with $f(t) = p$ consists of a section $s$
of $\ev_{\simeq}$ that lands in the component $\pcomp{(A \simeq X)}{\tilde{p}}$,
where $\tilde{p}$ denotes the equivalence associated to $p$.
But by \cref{prop:pointed-A-band-trivial}, the evaluation fibration
$\pcomp{(A \simeq X)}{\tilde{p}} \to X$ is an equivalence, so it has a unique section.
\end{proof}

\begin{rmk}
It follows that $T_A(X)$ is a set.  One can also show this using
\cref{cor:X=X-trivial,prop:central-tfae}.
\end{rmk}

\begin{rmk}\label{rmk:action-coherent}
Let $X$ be an $A$-torsor, or equivalently, an $A$-band.
By \cref{cor:X=X-trivial}, we have an equivalence $e : A \simeq \pcomp{(X \simeq X)}{\id}$.
Since $A$ has a unique H-space structure, this equivalence is an equivalence of H-spaces,
where the codomain has the H-space structure coming from composition.
Since $A$ is connected, the $A$-action on $X$ gives a map $\alpha' : A \pto \pcomp{(X \simeq X)}{\id}$.
(In fact, $\alpha' = e$, but we won't use this fact.)
Using the equivalence $e$,
it follows from \cref{thm:Omega-equiv} that any map with the same type as $\alpha'$
is deloopable in a unique way.
That is, it has the structure of a group homomorphism in the sense of higher groups
(see~\cite{BvDR}).
This explains why our naive definition of an $A$-action is correct
in this situation.
\end{rmk}

\section{Examples and non-examples}\label{sec:examples}

We show that the Eilenberg--Mac~Lane spaces \(\EM{G}{n}\) are central whenever \(G\) is abelian and \(n > 0\),
and we use our results to give a self-contained, independent construction of Eilenberg--Mac~Lane spaces.
The base case \(\EM{G}{1}\) is discussed in \cref{ss:G-torsors} and the other
cases in \cref{ss:EM-spaces}.
In \cref{ss:products-EM-spaces}, we produce examples of products of Eilenberg--Mac~Lane spaces which are central
and examples which are not central.
At present, we do not know whether there exist central types which are not products of Eilenberg--Mac~Lane spaces.
In \cref{ss:stable-n-groups}, we show that any truncated, central type with just
two non-zero homotopy groups, both of which are finitely presented, is a product of Eilenberg--Mac~Lane spaces.

\subsection{The H-space of \texorpdfstring{\(G\)}{G}-torsors}
\label{ss:G-torsors}

Given a group \(G\), we construct the type \(TG\) of \(G\)-torsors and show that it is a \(\EM{G}{1}\).
Specifically, a pointed type $X$ \define{is a $\EM{G}{1}$}
if it is connected and comes equipped with a pointed equivalence $\Omega X \simeq_* G$ which
sends composition of loops to multiplication in $G$.
(We always point $\Omega X$ at $\refl$.)
Another account of this fact may be found in~\cite[Section~5.5]{SymmetryBook}.

When \(G\) is abelian, we can tensor \(G\)-torsors to obtain an H-space structure on \(TG\) which is analogous to the tensor product of bands of \cref{thm:BAutpl-H-space}.
These constructions are all classical and we therefore omit some details.

\begin{defn}
  Let \(G\) be a group.
  A \textbf{\boldmath{\(G\)}-set} is a set \(X\) with a group homomorphism \(\alpha : G \to \Aut(X)\).
  If the set \(X\) is merely inhabited and the map
  \( \alpha(-,x) : G \to X \)
  is an equivalence for every \(x : X\), then \((X,\alpha)\) is a \textbf{\boldmath{\(G\)}-torsor}.
  We write \(TG\) for the type of \(G\)-torsors.
  Given two \(G\)-sets \(X\) and \(Y\), we write \(X \to_G Y\) for the set of \(G\)-equivariant maps from \(X\) to \(Y\), defined in the usual way.
\end{defn}

We may write \(g \cdot x\) instead of \(\alpha(g,x)\) when no confusion can arise.
The following is straightforward to check:

\begin{lem}\label{lem:equiv-path-G-torsors}
  Let \(X\) and \(Y\) be \(G\)-torsors.
  There is a natural equivalence
  \( (X =_{TG} Y) \simeq (X \to_G Y). \)
  In particular, a \(G\)-equivariant map between \(G\)-torsors is automatically an equivalence.
  \qed
\end{lem}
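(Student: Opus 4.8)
The plan is to reduce an identification in $TG$ to a $G$-equivariant equivalence, and then to observe that between torsors every $G$-equivariant map is automatically an equivalence, so that the equivalence condition drops out. First I would unfold $TG$ as a $\Sigma$-type: a $G$-torsor consists of a $G$-set $(X,\alpha)$ together with the data that $X$ is merely inhabited and that $\alpha(-,x)$ is an equivalence for each $x$, and both of these are families of propositions. Hence an identification $X =_{TG} Y$ is the same as an identification of the underlying $G$-sets. A $G$-set is a set equipped with a homomorphism $\alpha : G \to \Aut(X)$, so by univalence and the characterization of identifications in $\Sigma$-types, an identification of $G$-sets amounts to an equivalence $e : X \simeq Y$ together with a witness that $e \circ \alpha(g) = \beta(g) \circ e$ for all $g : G$---that is, exactly a $G$-equivariant equivalence. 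This gives $(X =_{TG} Y) \simeq (G\text{-equivariant equivalences } X \simeq Y)$.

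Next I would prove that any $G$-equivariant map $f : X \to_G Y$ between torsors is an equivalence. Since this is a proposition and $X$ is merely inhabited, I may assume a point $x_0 : X$ and set $y_0 :\jeq f(x_0)$. The torsor condition makes $\alpha(-,x_0) : G \to X$ and $\beta(-,y_0) : G \to Y$ equivalences, while equivariance of $f$ gives $f(\alpha(g,x_0)) = \beta(g,y_0)$, i.e.\ $f \circ \alpha(-,x_0) = \beta(-,y_0)$. As this composite and $\alpha(-,x_0)$ are equivalences, the $2$-out-of-$3$ property shows $f$ is an equivalence.

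Finally, being an equivalence is a proposition that holds for every element of $X \to_G Y$, so the forgetful map from $G$-equivariant equivalences to $G$-equivariant maps is itself an equivalence. Composing with the identification from the first paragraph yields $(X =_{TG} Y) \simeq (X \to_G Y)$, and the ``in particular'' clause is precisely the step just proved. Naturality is automatic, since the comparison map is assembled canonically from univalence and transport. The only real bookkeeping is in the first paragraph---checking that the structure identity principle for $G$-sets produces exactly the $G$-equivariance condition---after which the argument is a short $2$-out-of-$3$ computation.
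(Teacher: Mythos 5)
Your proof is correct. The paper gives no argument for this lemma (it is stated as ``straightforward to check'' with a \(\qed\)), and your proof is exactly the standard check being alluded to: since the torsor conditions are propositions, identifications in \(TG\) reduce to identifications of the underlying \(G\)-sets, which by univalence and the structure identity principle are \(G\)-equivariant equivalences; the 2-out-of-3 argument (using mere inhabitation of \(X\) and that being an equivalence is a proposition) then shows every \(G\)-equivariant map between torsors is an equivalence, so the equivalence condition drops out and \((X =_{TG} Y) \simeq (X \to_G Y)\).
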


Any group \(G\) acts on itself by left translation, making \(G\) into a \(G\)-torsor which constitutes the base point \(\pt\) of both \(TG\) and the type of \(G\)-sets.
Since a \(G\)-equivariant map \(\pt \to_G X\) is determined by where it sends $1 : G$,
the map \( (\pt \to_G X) \to X \) that evaluates at $1$ is an equivalence.
It is clear that the type \(TG\) is a \(1\)-type, which implies that its loop space is a group.

\begin{prop}\label{prop:loops-TG}
  We have a group isomorphism \(\Omega TG \simeq G\).
\end{prop}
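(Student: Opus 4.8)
The plan is to exhibit a group isomorphism $\Omega TG \simeq G$ by first establishing an equivalence of underlying types and then verifying that it respects the group structures. The key observation is the one just noted in the excerpt: the base point $\pt$ of $TG$ is $G$ acting on itself by left translation, and evaluation at $1 : G$ gives an equivalence $(\pt \to_G \pt) \simeq G$. Combined with \cref{lem:equiv-path-G-torsors}, which identifies $(\pt =_{TG} \pt)$ with $(\pt \to_G \pt)$, this already yields an equivalence of types $\Omega TG \simeq G$. So the real content lies in checking that this equivalence is a group homomorphism.

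\textbf{First} I would spell out the composite equivalence explicitly. A loop $\ell : \pt =_{TG} \pt$ corresponds under \cref{lem:equiv-path-G-torsors} to a $G$-equivariant self-map $\phi_\ell : G \to_G G$, and evaluating at $1$ sends this to $\phi_\ell(1) : G$. I would name the resulting map $\Phi : \Omega TG \to G$, $\Phi(\ell) :\jeq \phi_\ell(1)$, and note it is an equivalence as a composite of the two equivalences above. The identity loop $\refl$ corresponds to $\id_G$, and $\id_G(1) = 1$, so $\Phi$ is pointed.

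\textbf{The main step} is to show that $\Phi$ turns composition of loops into the group multiplication of $G$. Under the correspondence of \cref{lem:equiv-path-G-torsors}, composing loops $\ell \ct \ell'$ corresponds to composing the associated equivariant maps $\phi_{\ell'} \circ \phi_\ell$ (one must track the order, which is why diagrammatic path composition matters here). So it suffices to show $(\phi_{\ell'} \circ \phi_\ell)(1) = \phi_\ell(1) \cdot \phi_{\ell'}(1)$, or some such identity up to the variance conventions. The key tool is that any $G$-equivariant map $\phi : G \to_G G$ is determined by $\phi(1)$, and by equivariance satisfies $\phi(g) = g \cdot \phi(1)$. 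Hence $(\phi_{\ell'} \circ \phi_\ell)(1) = \phi_{\ell'}(\phi_\ell(1)) = \phi_{\ell'}(\phi_\ell(1) \cdot 1) = \phi_\ell(1) \cdot \phi_{\ell'}(1)$, using equivariance of $\phi_{\ell'}$. This gives the homomorphism property (up to fixing the convention so that the group structure on $\Omega TG$ matches that of $G$, possibly passing to the opposite group, which one reconciles by choosing the variance in \cref{lem:equiv-path-G-torsors} appropriately).

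\textbf{The main obstacle} I anticipate is bookkeeping of variance and conventions: the diagrammatic order of path composition, the direction of the equivalence in \cref{lem:equiv-path-G-torsors}, and whether equivariant maps compose covariantly or contravariantly must all be aligned so that the final formula reads as genuine multiplication in $G$ rather than in $G^{\mathrm{op}}$. Since the paper states these constructions are classical and omits details, I would keep the proof brief: assert the equivalence of types from \cref{lem:equiv-path-G-torsors} together with evaluation at $1$, and then verify the homomorphism property via the formula $\phi(g) = g \cdot \phi(1)$ for equivariant self-maps of $\pt$, leaving the routine variance check to the reader.
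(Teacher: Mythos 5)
Your proposal is correct and follows essentially the same route as the paper: both identify $\Omega TG$ with $(\pt \to_G \pt)$ via \cref{lem:equiv-path-G-torsors}, evaluate at $1$, and use the equivariance computation $\phi(\psi(1)) = \psi(1)\cdot\phi(1)$ to handle the group structures. Your hedging about variance is unnecessary, since your own computation already shows the two order-reversals (paths-to-maps, and evaluation at $1$) cancel, which is exactly how the paper phrases it: the composite of two anti-isomorphisms is an isomorphism.
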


  We only sketch a proof since this is a classical result.

\begin{proof}
  Since paths between \(G\)-torsors correspond to \(G\)-equivariant maps, we have equivalences of sets
  \[ (\pt =_{TG} \pt) \, \simeq \, (\pt \to_G \pt) \, \simeq \, G , \]
  where the second equivalence is given by evaluation at $1$.
  The first equivalence sends path composition to composition of maps, which reverses the order---i.e., it's an anti-isomorphism.
  The second equivalence evaluates a map at \(1 : G\).
  Thus, for \(\phi, \psi : \pt \to_G \pt\) we have
  \[ \phi( \psi(1)) = \phi( \psi(1) \cdot 1 ) = \psi(1) \cdot \phi(1) , \]
  where \(\cdot\) denotes the multiplication in \(G\).
  In other words, evaluation at \(1\) is an anti-isomorphism, meaning the composite \((\pt =_{TG} \pt) \simeq G\) is an isomorphism of groups.
\end{proof}

The following proposition says that the \(G\)-torsors are precisely those \(G\)-sets which lie in the component of the base point.

\begin{prop}
  A \(G\)-set \((X, \alpha)\) is a \(G\)-torsor if and only if there merely exists a \(G\)-equivariant equivalence from \(\pt\) to \(X\).
\end{prop}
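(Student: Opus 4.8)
The plan is to prove both implications directly, taking advantage of the fact that in each direction the statement to be proven is a proposition, so we may freely eliminate the mere-existence hypotheses.

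First I would treat the forward direction. Suppose \((X,\alpha)\) is a \(G\)-torsor. The goal is the mere existence of a \(G\)-equivariant equivalence \(\pt \to_G X\), which is a proposition, and \(X\) is merely inhabited, so I may assume an element \(x_0 : X\). I then consider the translation map \(\alpha(-,x_0) : G \to X\). It is \(G\)-equivariant because the action axiom \(\alpha(gh,x_0) = \alpha(g,\alpha(h,x_0))\)---which holds since \(\alpha\) is a homomorphism \(G \to \Aut(X)\)---says exactly that \(\alpha(-,x_0)\) intertwines left translation on \(\pt = G\) with the action on \(X\). Moreover it is an equivalence by the defining condition of a torsor. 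This produces the required equivariant equivalence. (Alternatively, once equivariance is established, one could invoke \cref{lem:equiv-path-G-torsors} to conclude it is an equivalence, but the torsor hypothesis already supplies this.)

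For the reverse direction, suppose there merely exists a \(G\)-equivariant equivalence \(e : \pt \to_G X\). Mere inhabitation of \(X\) is immediate, since \(G\) is inhabited by \(1\) and \(e\) merely exists. It remains to show that \(\alpha(-,x)\) is an equivalence for every \(x : X\); as this is a proposition, I may assume an actual equivariant equivalence \(e\). Given \(x : X\), the equivalence \(e\) determines a unique \(g_0 : G\) with \(e(g_0) = x\). Equivariance of \(e\) then gives \(\alpha(g,x) = \alpha(g,e(g_0)) = e(g g_0)\) for all \(g\), so \(\alpha(-,x)\) factors as \(e\) precomposed with right translation \(g \mapsto g g_0\). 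Right translation by \(g_0\) is an equivalence of \(G\) (with inverse right translation by \(g_0^{-1}\)), and \(e\) is an equivalence, so \(\alpha(-,x)\) is a composite of equivalences and hence an equivalence.

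The whole argument is essentially bookkeeping, reducing to the standard fact that left translations exhibit \(G\) as the free transitive \(G\)-set. The only points requiring care are the identification of \(\alpha(-,x)\) with \(e\) followed by right translation via equivariance, and checking that each goal is propositional before discharging a truncation; I expect no genuine obstacle.
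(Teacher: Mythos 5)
Your proof is correct, and its forward direction coincides exactly with the paper's: both eliminate the truncation on an inhabitant \(x_0 : X\) and exhibit the translation map \(\alpha(-,x_0) : G \to X\) as a \(G\)-equivariant equivalence. The backward direction is where you genuinely diverge. The paper, after untruncating the equivariant equivalence \(e : \pt \to_G X\) (legitimate since being a \(G\)-torsor is a proposition), finishes in one stroke: \(\pt\) is a \(G\)-torsor, and being a torsor is a property, so it transports along the equivariant equivalence (implicitly using that such an equivalence yields a path of \(G\)-sets, in the spirit of \cref{lem:equiv-path-G-torsors} and univalence). You instead verify the torsor conditions directly: \(e(1)\) witnesses mere inhabitation, and for each \(x : X\) you factor \(\alpha(-,x)\) as \(e\) precomposed with right translation by the unique \(g_0\) satisfying \(e(g_0) = x\), both of which are equivalences. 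Your route is more elementary and self-contained, since it never appeals to transporting a property along an equivalence of structured types; the cost is that you redo, inside the factorization, the computation that makes \(\pt\) itself a torsor (namely that right translations of \(G\) are equivalences), which the paper simply reuses as an already-established fact.
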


\begin{proof}
  Suppose \(X\) is a \(G\)-torsor.
  To produce a mere \(G\)-equivariant equivalence \(\pt \simeq_G X\) we may assume we have some \(x : X\), since \(X\) is merely inhabited.
  Then \((-) \cdot x : G \to X\) yields an equivalence which is clearly \(G\)-equivariant, as required.

  Conversely, assume that there merely exists a \(G\)-equivariant equivalence from \(\pt\) to \(X\).
  Since being a \(G\)-torsor is a proposition, we may assume we have an actual \(G\)-equivariant equivalence.
  But then we are done since \(\pt\) is a \(G\)-torsor.
\end{proof}

It follows that \(TG\) is connected.
Thus by \cref{prop:loops-TG} we deduce:

\begin{cor}
  The type \(TG\) is a \(\EM{G}{1}\).  \qed
\end{cor}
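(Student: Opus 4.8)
The plan is to verify the two conditions in the definition of ``being a $\EM{G}{1}$'': that $TG$ is connected, and that it carries a pointed equivalence $\Omega TG \simeq_* G$ sending composition of loops to multiplication in $G$. Both ingredients are already essentially available, so the proof amounts to assembling them.

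First I would establish connectedness. By the preceding proposition, a $G$-set is a $G$-torsor exactly when it merely admits a $G$-equivariant equivalence from the base point $\pt$. Applying this to an arbitrary $G$-torsor $X$ and using \cref{lem:equiv-path-G-torsors}, which identifies $G$-equivariant maps $\pt \to_G X$ with paths $\pt =_{TG} X$, we obtain a mere path $\pt =_{TG} X$ for every $X$. Since being connected is a proposition, this suffices to conclude that $TG$ is connected.

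Next I would invoke \cref{prop:loops-TG}, which provides a group isomorphism $\Omega TG \simeq G$. As this is an isomorphism of groups, its underlying equivalence sends composition of loops to multiplication in $G$, which is exactly the coherence demanded by the definition; moreover it is automatically pointed, since any group homomorphism carries the identity (here $\refl$) to the identity of $G$. Combining connectedness with this pointed group isomorphism exhibits $TG$ as a $\EM{G}{1}$.

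There is no substantial obstacle here, as the corollary is a direct bookkeeping step. The one point worth attention is ensuring that the equivalence $\Omega TG \simeq G$ respects the group structures rather than being a mere equivalence of underlying sets; but this is precisely the content of \cref{prop:loops-TG}, whose sketch already accounts for the order-reversal arising from the anti-isomorphism between path composition in $TG$ and composition of $G$-equivariant maps.
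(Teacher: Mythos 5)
Your proof is correct and follows exactly the paper's route: connectedness of $TG$ from the proposition characterizing torsors as the $G$-sets merely equivalent (equivariantly, hence by \cref{lem:equiv-path-G-torsors} merely path-equal) to $\pt$, combined with the group isomorphism $\Omega TG \simeq G$ from \cref{prop:loops-TG}. The paper states this in one line; your elaboration, including the observation that a group isomorphism is automatically pointed and respects the loop composition, is just the bookkeeping the paper leaves implicit.
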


For the remainder of this section, let \(G\) be an abelian group.

\begin{prop}
  For any two \(G\)-torsors \(S\) and \(T\), the path type \(S =_{TG} T\) is again a \(G\)-torsor.
\end{prop}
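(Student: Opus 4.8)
The plan is to transport the problem across \cref{lem:equiv-path-G-torsors}, which provides a natural equivalence $(S =_{TG} T) \simeq (S \to_G T)$ between the path type and the set of $G$-equivariant maps. Thus it suffices to equip $S \to_G T$ with the structure of a $G$-torsor, and the desired $G$-torsor structure on $S =_{TG} T$ is obtained by transporting along this equivalence.

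First I would define the action. For $g : G$ and $\phi : S \to_G T$, set $(g \cdot \phi)(s) :\jeq g \cdot \phi(s)$. This is again $G$-equivariant: for $h : G$ we have $(g \cdot \phi)(h \cdot s) = g \cdot (h \cdot \phi(s)) = (gh) \cdot \phi(s)$, while $h \cdot \big((g \cdot \phi)(s)\big) = (hg) \cdot \phi(s)$, and these agree precisely because $G$ is abelian. This assignment visibly respects the multiplication and identity of $G$, so it defines a group homomorphism $G \to \Aut(S \to_G T)$, making $S \to_G T$ into a $G$-set. Mere inhabitation of $S \to_G T$ is immediate from the fact that $TG$ is connected, since then $S =_{TG} T$ is merely inhabited and hence so is the equivalent type $S \to_G T$.

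It then remains to show that the orbit map $\alpha(-, \phi) : G \to (S \to_G T)$ is an equivalence for every $\phi$, i.e., that the action is free and transitive. Given $\phi, \psi : S \to_G T$, both are equivalences by \cref{lem:equiv-path-G-torsors}, so I may form the $G$-equivariant self-equivalence $\theta :\jeq \psi \circ \phi^{-1} : T \to_G T$. Since $T$ is merely inhabited and the goal of producing a unique $g$ with $g \cdot \phi = \psi$ is a proposition, I may choose $t_0 : T$; as $T$ is a torsor, there is a unique $g : G$ with $\theta(t_0) = g \cdot t_0$. Using $G$-equivariance of $\theta$ together with abelianness, $\theta(h \cdot t_0) = h \cdot \theta(t_0) = (hg) \cdot t_0 = (gh) \cdot t_0 = g \cdot (h \cdot t_0)$, and since every element of $T$ has the form $h \cdot t_0$ this shows $\theta = (g \cdot -)$, equivalently $\psi = g \cdot \phi$. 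Uniqueness of $g$ follows from $\alpha(-, t_0) : G \to T$ being an equivalence. Hence $\alpha(-, \phi)$ is an equivalence and $S \to_G T$ is a $G$-torsor.

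The main obstacle is the transitivity step, where abelianness of $G$ is essential, and is used twice: to make the pointwise action well-defined on equivariant maps, and to force every $G$-equivariant self-equivalence of a torsor to be a single left-multiplication by a central element. For a non-abelian $G$ neither of these holds, which is consistent with the fact that $TG$ only acquires an H-space structure in the abelian case.
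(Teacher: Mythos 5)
Your proof is correct, and its first half coincides with the paper's: both transport the problem along the equivalence of \cref{lem:equiv-path-G-torsors} and equip \(S \to_G T\) with the pointwise action \((g \cdot \phi)(s) :\jeq g \cdot \phi(s)\), with abelianness of \(G\) needed to see that this preserves equivariance. Where you genuinely diverge is in verifying the torsor axioms. The paper uses that being a \(G\)-torsor is a proposition, together with the preceding proposition that every \(G\)-torsor is merely \(G\)-equivariantly equivalent to \(\pt\), to reduce to the case \(S \jeq \pt \jeq T\); it then observes that evaluation at \(1\) from \cref{prop:loops-TG} is an equivalence of \(G\)-sets \((\pt \to_G \pt) \simeq G\), so the torsor structure is inherited from \(G\) itself. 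You instead keep \(S\) and \(T\) arbitrary: choosing \(t_0 : T\) (legitimate, since unique existence of \(g\) with \(g \cdot \phi = \psi\) is a proposition and \(T\) is merely inhabited), you show that every \(G\)-equivariant self-map \(\theta : T \to_G T\) is left translation by a unique group element --- which is exactly where abelianness enters a second time, since for non-abelian \(G\) such maps are right translations rather than left ones. In effect you re-derive the content of \cref{prop:loops-TG} inline for an arbitrary torsor. The trade-off: the paper's route is shorter because it reuses the classification of torsors as the component of \(\pt\) and the earlier computation of \(\Omega\, TG\), while yours is more self-contained, avoids the reduction to the basepoint, and makes explicit precisely where commutativity is indispensable. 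Both arguments are sound.
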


\begin{proof}
  First we make \(S =_{TG} T\) into a \(G\)-set.
  This path type is equivalent to the type \(S \to_G T\).
  Using that \(G\) is abelian, it's easy to check that the map
  \[ (g, \phi) \longmapsto \big( s \mapsto g \cdot \phi(s) \big) \ : \ G \times (S \to_G T) \longrightarrow (S \to_G T) \]
  is well-defined and makes \(S \to_G T\) into a \(G\)-set.

  To check that the above yields a \(G\)-torsor, we may assume that \(S \jeq \pt \jeq T\), by the previous lemma.
  One can check that \cref{prop:loops-TG} gives an equivalence of \(G\)-sets, where \(\pt \to_G \pt\) is equipped with the \(G\)-action just described.
  Thus \(\pt \to_G \pt\) is a \(G\)-torsor, as required.
\end{proof}

In order to describe the tensor product of \(G\)-torsors, we first need to define duals.

\begin{defn}
  Let \((X, \alpha)\) be a \(G\)-torsor.
  The \textbf{dual \boldmath{\(\dual{X}\)}} of \(X\) is the \(G\)-torsor \(X\) with action
  \[ \dual{\alpha}(g,x) :\jeq \alpha(g^{-1}, x). \]
\end{defn}

The tensor product of \(G\)-torsors is now defined as
\( X \otimes Y :\jeq (\dual{X} =_{TG} Y). \)

\begin{prop}
 The tensor product of \(G\)-torsors makes \(TG\) into an H-space.
\end{prop}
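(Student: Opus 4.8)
The plan is to mirror the proof of \cref{thm:BAutpl-H-space} and obtain the H-space structure from \cref{lem:symmetry}. Concretely, with $X \otimes Y \jeq (\dual{X} =_{TG} Y)$, it suffices to produce a left unit law $\pt \otimes Y = Y$ together with a symmetry $\sigma_{X,Y} : X \otimes Y = Y \otimes X$ satisfying $\sigma_{\pt,\pt} = \refl$; the right unit law and abelianness are then supplied by \cref{lem:symmetry}, since that lemma manufactures a coherent abelian H-space out of exactly this data.

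First I would record that $\dual{\pt} = \pt$. The dual of the regular torsor $\pt \jeq G$ carries the action $g \cdot x \jeq g^{-1}x$, and because $G$ is abelian the inversion map $x \mapsto x^{-1}$ is a $G$-equivariant equivalence $\dual{\pt} \to \pt$; by \cref{lem:equiv-path-G-torsors} this yields a path $\dual{\pt} = \pt$. Granting this, the left unit law reduces the goal $\pt \otimes Y = (\dual{\pt} =_{TG} Y)$ to $(\pt =_{TG} Y)$, which by \cref{lem:equiv-path-G-torsors} is $(\pt \to_G Y)$, and the latter is equivalent to $Y$ by evaluation at $1 : G$. After inducting so that we may assume $Y \jeq \pt$, one checks this equivalence respects the torsor structures.

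For the symmetry, I would use that taking duals is an involution on $TG$, i.e.\ $\dual{\dual{X}} = X$. Composing path inversion $(\dual{X} =_{TG} Y) \simeq (Y =_{TG} \dual{X})$ with the dualization map $(Y =_{TG} \dual{X}) \to (\dual{Y} =_{TG} \dual{\dual{X}}) = (\dual{Y} =_{TG} X)$ produces an equivalence $X \otimes Y \simeq Y \otimes X$ of the underlying torsors. The routine part is verifying that this composite is $G$-equivariant for the two path-torsor structures, which is the kind of reshuffling the section otherwise omits.

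The main obstacle is verifying $\sigma_{\pt,\pt} = \refl$, which is the exact analog of the step in \cref{prop:bands-tensor-symmetric} that invoked \cref{prop:inversion-negation-trivial}. Here the roles played there by path inversion and by $\invrefl$ are taken over by inversion on $G$: the symmetry restricted to $X = Y = \pt$ is assembled from path inversion and dualization, and one must show it acts trivially on the self-torsor $\pt \otimes \pt$. Since $TG$ is a $1$-type, the type $(\pt \otimes \pt =_{TG} \pt \otimes \pt)$ is a set, so $\sigma_{\pt,\pt} = \refl$ is a proposition; it then amounts to the fact that inversion on $G$, viewed as an automorphism of the relevant torsor, induces the trivial coherence, mirroring $\overline{\invrefl} = \invrefl$ in the banded case.
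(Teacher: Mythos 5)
Your proposal is correct and follows essentially the same route as the paper: it too verifies the hypotheses of \cref{lem:symmetry}, builds the symmetry by inverting $G$-equivariant equivalences (your extra pass through dualization and $\dual{\dual{X}} = X$ produces the same underlying map that the paper gets directly, via its observation that the inverse of an equivariant map $\dual{X} \to_G Y$ is equivariant as a map $\dual{Y} \to_G X$), and obtains the left unit law from evaluation at $1 : G$. The one step you only gesture at, $\sigma_{\pt,\pt} = \refl$, is precisely where the paper does a concrete two-line computation: every $\phi : \dual{\pt} \to_G \pt$ satisfies $\phi(\phi(1)) = \phi(\phi(1) \cdot 1) = \phi(1)^{-1} \cdot \phi(1) = 1$, so such maps are their own inverses, which is the ``trivial coherence'' you appeal to.
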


\begin{proof}
  We verify the hypotheses of \cref{lem:symmetry}.
  Thus our first goal is to construct a symmetry
  \[ \sigma_{X,Y} : (\dual{X} =_{TG} Y) \ =_{TG} \ (\dual{Y} =_{TG} X). \]
  After identifying paths of \(G\)-torsors with \(G\)-equivariant equivalences, we may consider the map which inverts such an equivalence.
  A short calculation shows that if $\phi : \dual{X} \to_G Y$ is $G$-equivariant,
  then $\phi^{-1} : \dual{Y} \to_G X$ is again $G$-equivariant.
  We need to check that the map sending $\phi$ to $\phi^{-1}$ is itself \(G\)-equivariant,
  so let \(\phi : \dual{X} \to_G Y\) and let \(g : G\).
  Since the inverse of \(g \cdot (-)\) is \(g^{-1} \cdot (-)\), we have:
  \[ (g \cdot \phi)\inv = \phi\inv( g\inv \cdot (-)) = g \cdot \phi\inv (-) , \]
  using that \(\phi\inv : \dual{Y} \to_G X\) is \(G\)-equivariant.
  Thus inversion is \(G\)-equivariant, yielding the required symmetry \(\sigma\).

  Now we argue that \(\sigma_{\pt, \pt} = \refl\), or, equivalently, that maps \(\dual{\pt} \to_G \pt\) are their own inverses.
  Such a map is uniquely determined by where it sends \(1 : G\), so it suffices to show that \(\phi(\phi(1)) = 1\) for every \(\phi : \dual{\pt} \to_G \pt\).
  Fortunately, we have
  \[ \phi(\phi(1)) = \phi( \phi(1) \cdot 1) = \phi(1)\inv \cdot \phi(1) = 1. \]

  Lastly, it is straightforward to check that the map
  \( (\dual{\pt} \to_G X) \to X\)
  which evaluates at \(1 : G\) is \(G\)-equivariant, for any \(G\)-torsor \(X\).
  This yields the left unit law for the tensor product \(\otimes\).
  As such we have fulfilled the hypotheses of \cref{lem:symmetry}, giving us the desired H-space structure.
\end{proof}

Using \cref{prop:central-tfae}, one can check that \(TG\) is a central H-space.
(See \cref{prop:bautpl-EM}.)

\subsection{Eilenberg--Mac~Lane spaces}
\label{ss:EM-spaces}

We now use our results to give a new construction of Eilenberg--Mac~Lane spaces.
For an abelian group $G$, recall that a pointed type \(X\) \define{is a \(\EM{G}{1}\)} if it is connected and there is a pointed equivalence \(\Omega X \simeq_* G\) which sends composition of paths to multiplication in \(G\).
For \(n > 1\), a pointed type $X$ \define{is a $\EM{G}{n+1}$}
if it is connected and $\Omega X$ is a $\EM{G}{n}$.
It follows that such an $X$ is an $n$-connected $(n+1)$-type with
$\Omega^{n+1} X \simeq_* G$ as groups.

In the previous section we saw that the type \(TG\) of \(G\)-torsors is a \(\EM{G}{1}\) and is central whenever \(G\) is abelian.
The following proposition may be seen as a higher analog of this fact.

\begin{prop} \label{prop:bautpl-EM}
  Let \( G \) be an abelian group and let \( n > 0 \).
  If a type \( A \) is a \( \EM{G}{n} \) and an H-space,
  then $A$ is central and \( \BAutpl(A) \) is a \( \EM{G}{n+1} \) and an H-space.
\end{prop}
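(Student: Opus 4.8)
The plan is to establish centrality of $A$ first, after which the remaining assertions follow formally from the results of \cref{sec:bands-and-torsors}. To show that $A$ is central I will verify condition (4) of \cref{prop:central-tfae}: that $A$ is a connected H-space for which $A \pto \Omega A$ is contractible. The first two parts are immediate, since a $\EM{G}{n}$ with $n > 0$ is in particular connected, and $A$ is assumed to be an H-space. So the only real content is the contractibility of $A \pto \Omega A$, and the crux of the argument is recognizing that, having chosen condition (4), this becomes a pure connectivity statement with no further input about $G$.

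For this I use that a $\EM{G}{n}$ is an $(n-1)$-connected $n$-type. Being an $n$-type, its loop space $\Omega A$ is an $(n-1)$-type. I then invoke the standard fact that if $A$ is $(n-1)$-connected and $B$ is an $(n-1)$-type, then the type $A \pto B$ of pointed maps is contractible: the evaluation-at-$\pt$ map $(A \to B) \to B$ is an equivalence (its inverse sends $b$ to the corresponding constant function, which is an equivalence precisely because $A$ is $(n-1)$-connected and $B$ is $(n-1)$-truncated), and $A \pto B$ is by definition the fibre of this map over $\pt$, hence contractible. Applying this with $B :\jeq \Omega A$ shows that $A \pto \Omega A$ is contractible, so \cref{prop:central-tfae}(4) yields that $A$ is central. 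I expect the only point needing care to be the truncation-level bookkeeping — matching $A$'s $(n-1)$-connectedness against $\Omega A$'s $(n-1)$-truncatedness, and checking the degenerate case $n = 1$, where $\Omega A \simeq G$ is a set; everything else is formal. Note that this argument is self-contained and does not invoke the computation of $\EM{G}{n} \simeq_* \EM{G}{n}$ used in the earlier example.

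With centrality in hand, \cref{thm:BAutpl-H-space} immediately makes $\BAutpl(A)$ an (abelian) H-space, settling one half of the conclusion. It then remains to see that $\BAutpl(A)$ is a $\EM{G}{n+1}$. It is connected by construction (\cref{rmk:bautpl-locally-small}), and \cref{prop:loops-bautpl} provides a pointed equivalence $\Omega \BAutpl(A) \simeq_* A$. Since $A$ is a $\EM{G}{n}$ and being a $\EM{G}{n}$ is preserved along pointed equivalences (a pointed equivalence induces a group isomorphism on all iterated loop spaces), $\Omega \BAutpl(A)$ is again a $\EM{G}{n}$; by the definition of $\EM{G}{n+1}$ this shows $\BAutpl(A)$ is a $\EM{G}{n+1}$, completing the proof.
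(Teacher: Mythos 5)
Your proof is correct and follows essentially the same route as the paper: both establish centrality via condition (4) of \cref{prop:central-tfae} by a connectivity/truncatedness argument showing $A \pto \Omega A$ is contractible, then conclude with \cref{prop:loops-bautpl} and \cref{thm:BAutpl-H-space}. The only cosmetic difference is that the paper factors pointed maps through the contractible truncation $\Trunc{A}_{n-1}$, whereas you phrase the same connectedness--truncatedness orthogonality as the evaluation map $(A \to \Omega A) \to \Omega A$ being an equivalence and $A \pto \Omega A$ being its fibre over the basepoint.
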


The fact that $\BAutpl(A)$ is a $\EM{G}{n+1}$ also follows from~\cite{shulman14},
using the fact that $\BAutpl(A)$ is the 1-connected cover of $\BAut(A)$.

\begin{proof}
  Suppose that \( A \) is a \( \EM{G}{n} \) and an H-space.
  Then $A \pto \Omega A$ is contractible, since it is equivalent
  to $\Trunc{A}_{n-1} \pto \Omega A$, and $\Trunc{A}_{n-1}$ is contractible.
  So \cref{prop:central-tfae} implies that $A$ is central.
  By \cref{prop:loops-bautpl}, $\Omega \BAutpl(A) \simeq A$,
  so $\BAutpl(A)$ is a $\EM{G}{n+1}$.
  By \cref{thm:BAutpl-H-space}, $\BAutpl(A)$ is also an H-space.
\end{proof}

We can use the previous proposition to \emph{define} \(\EM{G}{n}\) for all \(n > 0\) by induction.
For the base case \(n \jeq 1\) we let \(K(G,1) :\jeq TG\), the type of \(G\)-torsors from the previous section.
When \(G\) is abelian, we saw that \(TG\) is an H-space, which lets us  apply the previous proposition.
By induction, we obtain a \(\EM{G}{n}\) for all \(n\).
Note that this construction produces a \( \EM{G}{n} \) which lives \(n-1 \) universes above the given \( \EM{G}{1} \), but that it is essentially small by the join construction~\cite{Rijke}.

\subsection{Products of Eilenberg--Mac~Lane spaces}\label{ss:products-EM-spaces}

Here is our first example of a central type that is not an Eilenberg--Mac~Lane space.

\begin{ex}
Let $K = \EM{\Zb/2}{1} = \Rb P^{\infty}$ and $L = \EM{\Zb}{2} = \Cb P^{\infty}$,
and consider $A = K \times L$.
This is a connected H-space, and
  \begin{align*}
        \big( K \times L \pto \Omega (K \times L) \big)
&\simeq \big( K \pto \Omega (K \times L) \big) & \text{since $K = \Trunc{K \times L}_1$} \\
&\simeq \big( K \pto \Omega L \big)            & \text{since $K$ is connected} \\
&\simeq \big( \Zb/2 \to_{\ab} \Zb )          & \text{by~\cite[Theorem~5.1]{BvDR}} \\
&\simeq 1 .
  \end{align*}
So it follows from \cref{prop:central-tfae}(4) that $A$ is central.
\end{ex}

On the other hand, not every product of Eilenberg--Mac~Lane spaces is central.

\begin{ex}\label{ex:GEM-not-central}
Let $K = \EM{\Zb/2}{1} = \Rb P^{\infty}$ and $L' = \EM{\Zb/2}{2}$.
A calculation like the above shows that
$K \times L' \pto \Omega (K \times L')$ is not contractible,
so $K \times L'$ is not central.

Similarly, $\EM{\Zb}{1} \times \EM{\Zb}{2})$ (i.e., $\Sb^1 \times \Cb P^{\infty}$) is not central.
Classically, this also follows from~\cref{prop:hspace-pointed-section-comp}
combined with~\cite[Proposition~Ia]{Curjel68}, which shows that $\EM{\Zb}{1} \times \EM{\Zb}{2})$
has infinitely many distinct H-space structures.
\end{ex}

Clearly both of these examples can be generalized to other groups
and shifted to higher dimensions.
Moreover, by \cref{cor:retract-of-central}, the product of any non-central type
with any pointed type is again not central.

By \cref{prop:hspace-pointed-section-comp}, centrality of a type implies that it has a unique H-space structure.
The converse fails, as we now demonstrate.
We are grateful to David Wärn for bringing our attention to this example.

\begin{ex} \label{ex:unique-H-space-not-central}
  The type \(A :\jeq \EM{\Zb}{2} \times \EM{\Zb}{3}\) is not central, by a computation similar to the one in the previous example.
  However, we note that it admits a unique H-space structure.
  Since \(A\) is a loop space it admits an H-space structure.
  Then, by the first claim in \cref{prop:stable-types}, with $k=2$,
  we see that type of H-space structures on $A$ is contractible.
\end{ex}

\subsection{Truncated types with two non-zero homotopy groups}\label{ss:stable-n-groups}

All the examples we have of central types so far are generalized Eilenberg--Mac~Lane
spaces (GEMs), i.e., products of Eilenberg--Mac~Lane spaces.
We do not know whether all central types are GEMs.
In this section we rule out a class of potential counterexamples.
Specifically, we show that any truncated central type with only two non-zero homotopy
groups, both of which are finitely presented,
is a product of Eilenberg--Mac~Lane spaces.

We first show that one can reduce to the stable range.
Let $X$ be a \( (k+1) \)-truncated central type, which is
in particular \(0\)-connected.
Since \(X\) is an infinite loop space, we may consider an \((n-1)\)-fold delooping \(B^{n-1}X\),
for any \(n>k+1\).
This is a central, \( (n-1) \)-connected, \( (n+k) \)-truncated, pointed type,
and thus represents a stable \( k\)-type, i.e., a stable \( (k+1) \)-group~\cite{BvDR}.
If \(B^{n-1}X\) is a GEM, then so is \(X\), so for our goal of ruling out non-GEM,
truncated central types,
it suffices to consider stable \((k+1)\)-groups for \(k\ge1\).

Here is the main result of the section.

\begin{thm}\label{thn:central-implies-GEM}
  Let $X$ be a truncated central type and let $n, k \geq 1$.
  Suppose that $\pi_n(X)$ and $\pi_{n+k}(X)$ are non-trivial groups
  and that all of the other homotopy groups vanish.
  Assume that $\pi_n(X)$ is finitely presented and if $k > 1$ that
  $\pi_{n+k}(X)$ is as well.
  Then $X$ is merely equivalent to $K(\pi_n(X), n) \times K(\pi_{n+k}(X), n+k)$.
\end{thm}

\begin{proof}
  Write $A :\jeq \pi_n(X)$ and $B :\jeq \pi_{n+k}(X)$.
  By the argument above, we can assume that $n > k+1$.
  Since $X$ is truncated and has no other non-trivial homotopy groups,
  the fibre of the truncation map $X \to \Trunc{X}_n \simeq \EM{A}{n}$
  is a $\EM{B}{n+k}$.
  Since we are in the stable range, we can deloop the next map in the fibre sequence,
  so we see that $X$ is the homotopy fibre of a pointed map
  \( c : \EM{A}{n} \pto \EM{B}{n+k+1} \).
  We will show that $c$ is merely homotopic to the constant map,
  which implies that $X$ splits as claimed.

  Since \(X\) is central, \(X \pto \Omega X\) is contractible,
  so \(X \pto \Omega^iX\) is connected for all \(i\ge1\).
  In particular, taking $i=k$, we get that
  \(\Hom(A,B) \simeq \Trunc{\EM An \pto \EM Bn}_0 \simeq 0\).
  Since $A$ is finitely presented and $\Zb$ is a PID (in the constructive sense),
  $A$ is merely equivalent to a finite direct sum of cyclic groups.
  (See~\cite[Theorem~V.2.3]{MRR} or~\cite[Proposition~7.3]{LQ}.)
  Our goal is a proposition, so we can assume that $A$ is explicitly
  given as such a direct sum.
  Since \(\Hom(A,B)\) is trivial and $B$ is non-trivial,
  we must have that $A$ is finite, with torsion coprime to the torsion of $B$.
  Let $r$ be the cardinality of $A$.
  Since \(X\) is deloopable, so is \(c\), and in particular, the square
    \begin{equation}\label{eq:cr-square}
      \begin{tikzcd}
        \EM An \ar[r,"c"]\ar[d,"0"'] & \EM B{n+k+1} \ar[d,"r"] \\
        \EM An \ar[r,"c"] & \EM B{n+k+1}
      \end{tikzcd}
    \end{equation}
  commutes, where we write $r$ for the map induced by multiplication by $r$ on $B$.

  Now we split into cases.
  First assume that $k > 1$, which means we also know that $B$ is finitely presented.
  Since $X \pto \Omega^{k-1} X$ is connected, we deduce that
  \(H^{n+1}(\EM{A}{n}; B) \simeq \Trunc{\EM An \pto \EM B{n+1}}_{0} \simeq 0\).
  By~\cite[Theorem~2.2.1]{CF}, the cohomology group is isomorphic to $\Ext_\Zb^1(A,B)$,
  so the latter must also vanish.
  It follows that $B$ is finite as well,
  as $\Ext^1_{\Zb}(\Zb/s, \Zb) \simeq \Zb/s$
  (see~\cite[Corollary~21]{F})
  and $\Ext^1_{\Zb}$ respects direct sums.
  Since the torsion of $A$ is coprime to the torsion of $B$,
  multiplication by $r$ on $B$ is an isomorphism.
  Therefore, the right-hand map in~\eqref{eq:cr-square} is an equivalence.
  It follows that $c$ is trivial.

  Now we consider the case when $k = 1$.
  In the square \eqref{eq:cr-square}, we no longer
  know that the map on the right is an equivalence.
  However, it does follow that $c$ factors through the fiber of $r$,
  which we analyze next.

  We claim that multiplication by $r$ on $B$ is injective.
  It suffices to show that the kernel is trivial.
  So let $b : B$ be such that $r b = 0$.
  Let $p$ be a prime factor of $r$.
  Then $p ((r/p) b) = 0$, so there is a homomorphism $A \to B$
  which hits $(r/p) b$.  So $(r/p) b = 0$.  Continuing with the remaining prime
  factors of $r$, one eventually gets that $b = 0$.

  Thus we have a short exact sequence
  \begin{equation}\label{eq:SES}
    0 \lra B \llra{r} B \llra{q} B/r \lra 0.
  \end{equation}
  This gives rise to a fibre sequence
  \[
     K(B, n+1) \llra{q} K(B/r, n+1) \llra{f} K(B, n+2) \llra{r} K(B, n+2).
  \]
  The map $c$ is a composite
  \[
    K(A, n) \llra{c'} K(B/r, n+1) \llra{f} K(B, n+2).
  \]
  The short exact sequence~\eqref{eq:SES} also gives rise
  to a six-term exact sequence ending in
  \[
    \cdots \lra \Ext^1_{\Zb}(A, B) \lra \Ext^1_{\Zb}(A, B) \lra \Ext^1_{\Zb}(A, B/r) \lra 0.
  \]
  Constructively, for general $A$, the sequence would continue with $\Ext^2$,
  but since $A$ is finitely presented, $\Ext^2_{\Zb}(A, C)$ vanishes for all $C$.
  (See~\cite{CF} and~\cite{F} for these results.)
  So the map $\Ext^1_{\Zb}(A, B) \to \Ext^1_{\Zb}(A, B/r)$ is surjective.
  Using that maps of degree one are the same as $\Ext^1_{\Zb}$,
  we can identify that map with the map
  \[
    \Trunc{\EM{A}{n} \pto \EM{B}{n+1}}_0  \lra  \Trunc{\EM{A}{n} \pto \EM{B/r}{n+1}}_0
  \]
  induced by $q$.  This means that $c'$ merely factors through $q$, and
  therefore that the composite $c = f \circ c'$ is merely zero.
\end{proof}

\setcounter{biburlnumpenalty}{500}

\printbibliography

\end{document}